\newtheorem{theorem}{Theorem}
\newtheorem{lemma}[theorem]{Lemma}
\newtheorem*{claim*}{Claim}
\newtheorem{claim}[theorem]{Claim}
\newtheorem{corollary}[theorem]{Corollary}
\theoremstyle{definition}
\newtheorem{definition}[theorem]{Definition}
\newtheorem{remark}[theorem]{Remark}
\newtheorem*{remark*}{Remark}
\newtheorem{assumption1}[theorem]{Standard Assumption}
\newtheorem{assumption2}[theorem]{Strong Assumption}
\renewcommand{\le}{\leqslant}
\renewcommand{\ge}{\geqslant}
\newcommand\noproof{\hfill$\Box$}
\newcommand\Bi{{\mathrm{Bin}}}
\newcommand\eps{\varepsilon}
\newcommand\la{\lambda}
\newcommand\La{\Lambda}
\newcommand\las{\lambda_*}
\renewcommand\Pr{{\mathbb P}}
\newcommand\E{{\mathbb E}}
\newcommand\Var{{\mathrm{Var}}}
\newcommand\Covar{{\mathrm{Cov}}}
\newcommand\dto{\overset{\mathrm{d}}{\to}}
\newcommand\pto{\overset{\mathrm{p}}{\to}}
\newcommand\cc{{\mathrm{c}}}
\newcommand\op{o_{\mathrm{p}}}
\newcommand\Op{O_{\mathrm{p}}}
\newcommand\cF{\mathcal{F}}
\newcommand\cA{\mathcal{A}}
\newcommand\cB{\mathcal{B}}
\newcommand\cC{\mathcal{C}}
\newcommand\cL{\mathcal{L}}
\newcommand\cU{\mathcal{U}}
\newcommand\cE{\mathcal{E}}
\newcommand\tX{\widetilde{X}}
\newcommand\bb[1]{\bigl(#1\bigr)}
\newcommand\Bb[1]{\Bigl(#1\Bigr)}
\newcommand\ind[1]{1_{#1}}
\newcommand\dd{\mathrm{d}}
\newcommand\Hrnp{H^r_{n,p}}
\newcommand\Htnp{H^2_{n,p}}
\begin{document}
\title{Exploring hypergraphs with martingales}
\author{B\'ela Bollob\'as%
\thanks{Department of Pure Mathematics and Mathematical Statistics,
Wilberforce Road, Cambridge CB3 0WB, UK and
Department of Mathematical Sciences, University of Memphis, Memphis TN 38152, USA.
E-mail: {\tt b.bollobas@dpmms.cam.ac.uk}.}
\thanks{Research supported in part by NSF grant DMS-1301614 and
EU MULTIPLEX grant 317532.}
\and Oliver Riordan%
\thanks{Mathematical Institute, University of Oxford, Radcliffe Observatory Quarter, Woodstock Road, Oxford OX2 6GG, UK.
E-mail: {\tt riordan@maths.ox.ac.uk}.}}
\date{March 25, 2014; revised February 2, 2016}
\maketitle

\begin{abstract}
Recently, in~\cite{BR_hyp} we adapted exploration and martingale arguments
of Nachmias and Peres~\cite{NP_giant}, in turn
based on ideas of Martin-L\"of~\cite{ML86}, Karp~\cite{Karp} and Aldous~\cite{Aldous},
to prove asymptotic normality
of the number $L_1$ of vertices in the largest component $\cL_1$ of the random $r$-uniform
hypergraph in the supercritical regime.
In this paper we take these
arguments further to prove two new results: strong tail bounds on the distribution
of $L_1$, and joint asymptotic normality of $L_1$ and the number $M_1$ of edges
of $\cL_1$ in the sparsely supercritical case. These results are used in~\cite{BRsmoothing},
where we enumerate sparsely connected hypergraphs asymptotically.
\end{abstract}

\section{Introduction and results}

For $2\le r\le n$ and $0<p<1$, let $\Hrnp$ denote the random $r$-uniform hypergraph
with vertex set $[n]=\{1,2,\ldots,n\}$ in which
each of the $\binom{n}{r}$ possible hyperedges is present independently
with probability $p$. One family of interesting questions concerning $\Hrnp$ asks for
analogues of the pioneering results of Erd\H os and R\'enyi~\cite{ERgiant}
concerning the phase transition in the graph ($r=2$) case of this model,
as well as analogues of the many more detailed and precise results that followed.
Throughout the paper we fix $r\ge 2$ and consider 
\[
 p=p(n)=\la (r-2)!n^{-r+1}
\]
with $\la=\la(n)=\Theta(1)$.
The reason for this normalization is that,
as shown by Schmidt-Pruzan and Shamir~\cite{S-PS}, with
this choice $\la=1$ is the critical point
of the phase transition in $\Hrnp$, above which a giant component emerges.

For $r=2$, a great deal is known; for $r\ge 3$, most past results
concern the case $\la\ne 1$ constant, or (essentially equivalently), $\la=1\pm\Theta(1)$.\footnote{Given functions $f(n)$ and $g(n)$ with $g(n)>0$ for $n\ge n_0$,
we write $f(n)=O(g(n))$ if $\limsup_{n\to\infty} |f(n)|/g(n)<\infty$, i.e., there is a constant $C>0$
such that $|f(n)|\le Cg(n)$ for all $n\ge n_0$. We write $f(n)=\Theta(g(n))$ if there are \emph{positive} constants
$C>c>0$ such that $cg(n)\le f(n)\le Cg(n)$ for all large enough $n$.
Similarly, $f(n)=\Omega(g(n))$ if $\exists n_0,c>0$ such that $f(n)\ge cg(n)$ for $n\ge n_0$.}
Here we are especially
interested in what happens when $\la\to 1$, so much of the time we write
$\la=1+\eps$ or $\la=1-\eps$, with $\eps=\eps(n)\to 0$. In~\cite{BR_hyp},
a result of Aldous~\cite{Aldous} concerning critical random graphs ($r=2$) is
extended to $r\ge 3$; this implies in particular that
the critical window of the phase transition in $\Hrnp$
is when $\eps^3n=O(1)$, just as in the graph case.
Here we study $\Hrnp$ \emph{outside} the critical window, i.e., when $\eps^3n\to\infty$.

If $G$ is a (multi-)graph, then its \emph{nullity} is
\[ 
 n(G) = c(G) +e(G) - |G|,
\]
where $|G|$, $e(G)$ and $c(G)$ are the numbers of vertices, edges and components of $G$.
In the hypergraph case, it is natural to define the nullity of $H$
as the nullity of any multigraph obtained by replacing each hyperedge by a tree
on the same set of vertices. In the $r$-uniform case, this reduces to the following definition:
\[
 n(H) = c(H)+(r-1)e(H)-|H|.
\]
For connected graphs and hypergraphs, one often studies instead the \emph{excess} $n(G)-1$
or $n(H)-1$. However, while this definition is natural for connected graphs (where it reduces to $e(G)-|G|$),
it seems less natural for hypergraphs, and we prefer to work with $n(H)$.

Let $\cL_1$ be the component of $\Hrnp$
containing the most vertices, chosen according to any rule if there is a tie.
Let $L_1=|\cL_1|$ and $M_1=e(\cL_1)$ be the numbers of vertices and edges in $\cL_1$,
and $N_1=n(\cL_1)$ its nullity, so
\[
 (r-1)M_1 = L_1 +N_1 -1.
\]
Our main aim is to prove a bivariate central limit theorem (Theorem~\ref{thglobal2} below)
for the random variable $(L_1,N_1)$ (and hence for $(L_1,M_1)$ and for $(M_1,N_1)$)
throughout the sparsely supercritical regime, i.e.,
when $\la=1+\eps$ with $\eps^3n\to\infty$ and $\eps\to 0$. The corresponding result for $\eps=\Theta(1)$
was proved recently by Behrisch, Coja-Oghlan and Kang~\cite{BC-OK2a},
as part of a stronger result, a local limit theorem. Their methods are completely
different from ours, and seem very unlikely to adapt to the case $\eps\to 0$.

Our second aim is to prove, in Theorems~\ref{thsubtail} and~\ref{thsupertail} below,
large-deviation bounds on $L_1$ in the supercritical and subcritical cases.
As far as we are aware, even for $\eps=\Theta(1)$ these results are new
for hypergraphs, so here we do not assume that $\eps\to 0$.
As we show in a separate paper~\cite{BRsmoothing}, it is possible to use `smoothing'
arguments to deduce from Theorem~\ref{thglobal2} its local limit analogue,
and hence to give an asymptotic formula for the number of connected $r$-uniform
hypergraphs with $s$ vertices and $m$ edges, for suitable $m=m(s)$.
The tail bounds proved here are needed for these arguments
as well as being (we hope) of interest
in their own right.

To state our results precisely we need a number of definitions;
we shall (mostly) follow the notation in~\cite{BR_hyp}.
For $\la>1$ let $\rho_{\la}$ be the unique positive solution to
\begin{equation}\label{rldef}
 1-\rho_{\la} = e^{-\la \rho_\la},
\end{equation}
so $\rho_\la$ is the survival probability of a Galton--Watson branching process
whose offspring distribution is Poisson with mean $\la$,
and define $\las<1$, the parameter \emph{dual} to $\la$, by
\[
 \las e^{-\las} = \la e^{-\la}.
\]
It is easy to check that
\begin{equation}\label{las}
 \las=\la(1-\rho_\la),
\end{equation}
and that for any $A>1$ there exist $C>c>0$ such that
$\la=1+\eps\in (1,A]$ implies
\begin{equation}\label{lasrate2}
  1-C\eps \le \las \le 1-c\eps.
\end{equation}

For $\la>1$ and $r\ge 2$, define $\rho_{r,\la}$ by
\begin{equation}\label{rkldef}
 1-\rho_{r,\la} = (1-\rho_{\la})^{1/(r-1)},
\end{equation}
and set
\begin{equation}\label{rhosdef}
 \rho_{r,\la}^* = \frac{\la}{r}\bb{1-(1-\rho_{r,\la})^r} - \rho_{r,\la}.
\end{equation}
(The star here does not refer to duality; rather it is a notational convention
adopted from~\cite{norm2}.)
If $\la=1+\eps$ then, as $\eps\to 0$, elementary but tedious calculations show that
\begin{equation}\label{rsasymp}
 1-\las\sim \eps,\quad  \rho_{r,\la} \sim \frac{2\eps}{r-1}, \hbox{\quad and\quad} 
 \rho_{r,\la}^* \sim \frac{2}{3(r-1)^2}\eps^3.
\end{equation}
One way to see this is to use \eqref{rldef} to find (term-by-term) the first few
terms in a series expansion for $\rho_\la$, and to substitute this expansion
into~\eqref{rkldef} and then~\eqref{rhosdef}.\footnote{It turns out that with $\la=1+\eps>1$ we have
$\rho_\la = 2\eps - \frac{8}{3}\eps^2 + O(\eps^3)$.
This gives
\[
 \rho_{r,\la} = \frac{2}{r-1}\eps - \frac{2(r+2)}{3(r-1)^2}\eps^2 + O(\eps^3),
\]
which is enough to establish \eqref{rsasymp}.}

In~\cite{BR_hyp} we showed that
throughout the supercritical regime, i.e., when $\eps^3n\to\infty$ and $\eps=O(1)$,
the random variable $L_1(\Hrnp)$ is asymptotically
normally distributed with mean $\rho_{r,\la}n$ and variance $\sigma_{r,\la}^2n$,
where a formula for $\sigma_{r,\la}$ is given in~\cite[Eq. (3)]{BR_hyp}.
As noted there, when $\eps\to 0$, $\sigma_{r,1+\eps}^2\sim 2\eps^{-1}$.
Hence, under this additional assumption, the main result of~\cite{BR_hyp}
says exactly that $L_1(\Hrnp)$ is
asymptotically normally distributed with mean $\rho_{r,\la}n$ and variance $2n/\eps$.
Our first result extends this univariate central limit theorem to a bivariate one.

\begin{theorem}\label{thglobal2}
Let $r\ge 2$ be fixed, and let $p=p(n)=(1+\eps)(r-2)!n^{-r+1}$ where $\eps=\eps(n)\to 0$
and $\eps^3n\to\infty$. Let $L_1$ and $N_1$ be the order and nullity of the largest
component $\cL_1$ of $\Hrnp$. Then
\[
 \left( \frac{L_1-\rho_{r,\la}n}{\sqrt{2n/\eps}} ,
  \frac{N_1-\rho_{r,\la}^*n}{\sqrt{10/3}(r-1)^{-1}\sqrt{\eps^3 n}} \right)
\dto (Z_1,Z_2)
\]
as $n\to\infty$,
where $\rho_{r,\la}$ and $\rho_{r,\la^*}$ are defined as in \eqref{rkldef} and \eqref{rhosdef}
with $\la=1+\eps$,
$\dto$ denotes convergence in distribution, and $(Z_1,Z_2)$
has a bivariate Gaussian distribution
with mean $0$, $\Var[Z_1]=\Var[Z_2]=1$ and $\Covar[Z_1,Z_2]=\sqrt{3/5}$.
\end{theorem}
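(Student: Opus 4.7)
The plan is to re-run the exploration-and-martingale argument from~\cite{BR_hyp} for the univariate CLT, carrying a second coordinate alongside the queue-size walk $Y_t$: the accumulated nullity $D_t$. When processing $v_t$, write $\Delta E_t$ for the number of freshly uncovered hyperedges at $v_t$ and $\Delta V_t = \Delta Y_t + 1$ for the number of newly discovered vertices; an elementary count gives
\[
 \Delta D_t = (r-1)\Delta E_t - \Delta V_t,
\]
and using $p\binom{n-t}{r-1}\sim\la/(r-1)$ and $p\binom{n-t-1}{r-2}\sim\la/n$ the conditional mean works out to $\E[\Delta D_t\mid\cF_{t-1}]\sim\la Y_{t-1}/n$. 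Let $(\hat Y_t,\hat D_t)$ denote the two-dimensional martingale obtained by subtracting conditional means. Following~\cite{BR_hyp}, split $p=1-(1-p_1)(1-p_2)$ with a small sprinkling $p_2$, run a first $p_1$-round to identify a superset of $\cL_1$ up to negligible error, and extract all the Gaussian fluctuations from the $p_2$-round, so that the martingale CLT can be applied to $(\hat Y_t,\hat D_t)$ conditionally on the first-round state.

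The central computation is that, to leading order,
\begin{align*}
 \E\bigl[(\Delta\hat Y_t)^2\mid\cF_{t-1}\bigr]&\sim r-1, &
 \E\bigl[(\Delta\hat D_t)^2\mid\cF_{t-1}\bigr]&\sim \la Y_{t-1}/n,\\
 \E\bigl[\Delta\hat Y_t\,\Delta\hat D_t\mid\cF_{t-1}\bigr]&\sim 0,
\end{align*}
the cross term vanishing because each present hyperedge contributes $r-1$ to $(r-1)\Delta E_t$ and also $r-1$ (counted with multiplicity across shared new vertices) to $\Delta V_t$. At the deterministic time $\bar\tau\sim\rho_{r,\la}n\sim 2\eps n/(r-1)$ the fluid drift of $Y$ vanishes with slope $-\eps$, and the two linearisations
\[
 L_1-\bar\tau\approx\hat Y_{\bar\tau}/\eps,\qquad
 N_1-\rho_{r,\la}^*n\approx\hat D_{\bar\tau}+\frac{\la}{n}\int_0^{\bar\tau}\hat Y_s\,\dd s
\]
reduce the theorem to the joint Gaussian limit of the triple $(\hat Y_{\bar\tau},\,\hat D_{\bar\tau},\,\int_0^{\bar\tau}\hat Y_s\,\dd s)$, delivered by the bivariate martingale CLT. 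The first two components are uncorrelated by the vanishing cross term; evaluating $\Covar[\hat Y_{\bar\tau},\hat Y_s]\sim(r-1)s$ for $s\le\bar\tau$ and $\int_0^{\bar\tau}\E[Y_s]\,\dd s\sim(2/3)\eps^3n^2/(r-1)^2$, one finds $\Var[L_1]\sim 2n/\eps$, that $\Var[N_1]$ decomposes as a \emph{direct} piece $\tfrac{2}{3}(r-1)^{-2}\eps^3n$ coming from $\hat D_{\bar\tau}$ and an \emph{indirect} piece $\tfrac{8}{3}(r-1)^{-2}\eps^3n$ coming from $(\la/n)\int\hat Y\,\dd s$, which sum to $\tfrac{10}{3}(r-1)^{-2}\eps^3n$, and that $\Covar[L_1,N_1]\sim 2\eps n/(r-1)$; these combine to give the target correlation $\sqrt{3/5}$.

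The hard part will be uniform control of error terms along the exploration window of length $\Theta(\eps n)$. The univariate machinery of~\cite{BR_hyp} already handles the path-level concentration and truncation needed for $Y_t$, so the genuinely new difficulties are (i) verifying that $\E[\Delta\hat Y_t\,\Delta\hat D_t\mid\cF_{t-1}]$ really is $o(Y_{t-1}/n)$ uniformly in $t$, since any $\Theta(1)$ residual correlation per step would integrate to a spurious cross-covariance of the same order as the target one; and (ii) justifying the nullity linearisation, in particular showing that the stopping-time adjustment $L_1-\bar\tau=\Op(\sqrt{n/\eps})$ changes $N_1$ by only $\op(\sqrt{\eps^3n})$, which comes out because $\E[Y_t]$ is close to $0$ near $\bar\tau$. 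Both reductions are second-moment computations within the framework of~\cite{BR_hyp}, but they require genuine care because the indirect contribution $(\la/n)\int\hat Y\,\dd s$ is of the same order as the direct one and so must be captured exactly rather than absorbed into an error.
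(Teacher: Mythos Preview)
Your core strategy --- track the queue walk and the nullity increment jointly through the exploration, show that the nullity drift is $\sim A_{t-1}/n$, linearise at the deterministic absorption time $\bar\tau=\rho_{r,\la}n$, and read off a direct $\tfrac{2}{3}(r-1)^{-2}\eps^3n$ plus an indirect $\tfrac{8}{3}(r-1)^{-2}\eps^3n$ contribution to $\Var[N_1]$ --- is exactly what the paper does, and your variance and covariance computations match line by line.

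Two points of divergence are worth flagging. First, the two-round sprinkling you attribute to~\cite{BR_hyp} is not there: neither that paper nor this one splits $p$. As you describe it (a first round ``identifying a superset of $\cL_1$'' with ``all the Gaussian fluctuations'' coming from the second round) the idea does not work --- a first round at $p_1<p$ gives a \emph{subset} of $\cL_1$, and if $p_2\ll p$ then almost none of the variance lives in the second round. The paper instead runs a single exploration and shows that whp the component explored between times $T_0$ and $T_1$ (with $T_0=\op(\sqrt{n/\eps})$ and $T_1=\rho n+\Op(\sqrt{n/\eps})$) \emph{is} $\cL_1$; no conditioning on a first round is needed.

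Second, you appeal to a joint Gaussian limit for the triple $(\hat Y_{\bar\tau},\hat D_{\bar\tau},\int_0^{\bar\tau}\hat Y_s\,\dd s)$ ``delivered by the bivariate martingale CLT'', but the integral is not a martingale coordinate, so a bare two-dimensional martingale CLT does not deliver it. The paper sidesteps this by a summation-by-parts: it shows that $\sum_{t\le t_1}\E[\xi_t\mid\cF_{t-1}] = \rho^*n + \sum_{t\le t_1}\gamma_t\Delta_t + \op(\sqrt{\eps^3n})$ with deterministic weights $\gamma_t\sim(t_1-t)/n$, so that the ``indirect'' piece becomes a genuine martingale $\sum\gamma_t\Delta_t$. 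One then applies the bivariate martingale CLT directly to $\bigl(S_{t_1},\,\sum_{t\le t_1}(\gamma_t\Delta_t+\Delta_t^*)\bigr)$. This repackaging is the clean way to make your heuristic rigorous, and it also makes the cross-term bound $\Covar[\Delta_t,\Delta_t^*\mid\cF_{t-1}]=O(A_{t-1}/n)=O(\eps^2)=o(\eps)$ exactly the estimate needed (your concern (i)).
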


The graph case of this result was proved by Pittel and Wormald~\cite{PWio} using
very different methods, as part
of a stronger result. As noted above, the corresponding result with $\eps=\Theta(1)$
was proved recently by Behrisch, Coja-Oghlan and Kang~\cite{BC-OK2a}.
Their formula for the quantity corresponding to $\rho_{r,\la}^*$
coincides with ours, though the different notation obscures this. (They write
$\rho$ for $1-\rho$, and study $M_1$ rather than $N_1$. Since $M_1=(L_1+N_1-1)/(r-1)$, it is
straightforward to translate.)
We believe that our proof of Theorem~\ref{thglobal2} can be made to work
replacing the assumption $\eps\to 0$ by $\eps=O(1)$,
but the calculations would be more involved.
Since the result for $\eps=\Theta(1)$ is covered by that in~\cite{BC-OK2a},
we assume that $\eps\to 0$ to keep things simple.

\smallskip
We next turn to 
tail bounds on the distribution of $L_1(\Hrnp)$ in the subcritical and supercritical cases.
In reading these results, it is worth noting that in both cases, for deviations of order
$\eps n$, i.e., of order the typical value of $L_1(\Hrnp)$ in the supercritical case,
we obtain a bound on the probability of order $\exp(-\Omega(\eps^3n))$.
This formula, which we believe to be tight up to the constant,
corresponds to the function $\exp(-\Omega(n))$ that one expects when $\la\ne 1$ is constant.
We start with the subcritical case.

\begin{theorem}\label{thsubtail}
Let $r\ge 2$ be fixed and let $p=p(n)=(1-\eps)(r-2)!n^{-r+1}$ where $\eps^3n\to\infty$
and $1-\eps$ is bounded away from $0$.
If $L=L(n)$ satisfies $\eps^2L\to\infty$ and $L=O(\eps n)$, then there is a constant $C>0$ such that
\begin{equation}\label{L1bd}
 \Pr( L_1(\Hrnp) > L ) \le C \frac{\eps n}{L} \exp(-\eps^2L/C)
\end{equation}
for all large enough $n$.
\end{theorem}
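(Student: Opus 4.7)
The plan is to reduce the theorem to a single-vertex tail bound by a disjointness argument, and to prove the single-vertex bound by analysing the BFS exploration of the component as a random walk, combined with a cycle-lemma and a local limit estimate. The disjointness reduction is the elementary inequality
\[
 \#\{\text{components of order}>L\}\;\le\;\frac{1}{L}\,\#\{v:|\cC(v)|>L\},
\]
which on taking expectations gives
$\Pr(L_1>L)\le (n/L)\Pr(|\cC(v)|>L)$
for any fixed vertex $v$, reducing matters to showing that $\Pr(|\cC(v)|>L)\le C\eps\exp(-\eps^2L/C)$ for a single vertex.

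For this I would run the breadth-first exploration of $\cC(v)$ used in~\cite{BR_hyp}. Letting $\xi_t$ be the number of distinct previously unseen vertices discovered at the $t$-th step and $Y_t=Y_{t-1}-1+\xi_t$ (with $Y_0=1$) be the size of the active frontier, the event of interest is $\{|\cC(v)|>L\}=\{Y_t>0\ \forall\,t\le L\}$. Conditionally on the history, $\xi_t$ is stochastically dominated by $(r-1)$ times a $\Bi(\binom{n-s_t}{r-1},p)$ variable with $s_t=t+Y_{t-1}-1\le 2L$ throughout the range of interest, giving $\E[\xi_t-1]=-\eps+O(\eps^2)$ and uniform sub-Gaussian tails. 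By the cycle (ballot) lemma applied to the walk $S_k=\sum_{i=1}^k(\xi_i-1)$,
\[
 \Pr(|\cC(v)|=k)\;=\;\tfrac{1}{k}\,\Pr(S_k=-1),
\]
and an exponentially tilted local limit (Berry--Esseen) estimate gives $\Pr(S_k=-1)\le Ck^{-1/2}\exp(-\eps^2k/C)$. Summing via $\sum_{k>L}k^{-3/2}\exp(-ak)\le Ca^{-1}L^{-3/2}\exp(-aL)$ for $aL\to\infty$ yields $\Pr(|\cC(v)|>L)\le C\eps^{-2}L^{-3/2}\exp(-\eps^2L/C)$. Since $\eps^2L\to\infty$ forces $(\eps^2L)^{-3/2}\to 0$, we have $\eps^{-2}L^{-3/2}\le \eps$ for all sufficiently large $n$, and the bound takes the form $\Pr(|\cC(v)|>L)\le C'\eps\exp(-\eps^2L/C)$; combining with the disjointness inequality gives the theorem.

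The main technical obstacle is the Berry--Esseen-type local limit estimate for $S_k$. The increments $\xi_i-1$ are not i.i.d.\ (their distribution drifts mildly via the factor $(1-s_t/n)^{r-1}$), and the optimal exponential tilt parameter is of order $\eps$, so the tilted walk operates near a critical regime where the natural error term in a local CLT must be controlled quantitatively. However, the total variation in the step distribution across the at most $2L$ steps we consider is $O(L/n)=O(\eps)$, and by adjusting the tilt slightly and absorbing these corrections into the constant $C$ in the exponent one obtains a clean local limit bound of the stated form; this careful but essentially routine bookkeeping is very much in the spirit of the exploration analysis already developed in~\cite{BR_hyp}.
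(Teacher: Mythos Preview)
Your approach is genuinely different from the paper's and can be made to work, but there is a real gap in the use of the cycle (ballot) lemma. The identity $\Pr(|\cC(v)|=k)=\tfrac{1}{k}\Pr(S_k=-1)$ requires the increments $\xi_i-1$ to be exchangeable (typically i.i.d.); for the exploration walk they are not, and this cannot be absorbed into ``bookkeeping'' in the local-limit step---you have located the non-i.i.d.\ difficulty in the wrong place. The clean fix is to pass first to a dominating i.i.d.\ walk: since each $\xi_t$ is, conditionally on the history, stochastically dominated by $\tilde\xi\sim(r-1)\Bi\bigl(\binom{n-1}{r-1},p\bigr)$, a standard coupling yields $\Pr(Y_t>0\ \forall\,t\le L)\le\Pr(\tilde Y_t>0\ \forall\,t\le L)$, and to the i.i.d.\ walk the cycle lemma and a tilted local limit apply legitimately. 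Note also that for $r\ge 3$ the dominating increments lie on the sublattice $(r-1)\ZZ-1$, so $\tilde S_k=-1$ only when $(r-1)\mid(k-1)$; this changes nothing substantive but the local limit must be stated for the right lattice.

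For comparison, the paper avoids both the cycle lemma and any local limit theorem. It explores the whole hypergraph at once via the process $(X_t)$, observes that a component of size exceeding $L$ forces $X_{t+L}\ge X_t$ for some $t$, and bounds the probability of this over each window $[jL,(j+1)L)$ by a direct Hoeffding--Azuma-type inequality (Lemma~\ref{lmart}) applied to the martingale part of $(X_t)$. Since the drift at time $t$ is at most $-a_t=-(\eps+(1-\eps)t/n)$, the chance that $X$ fails to drop over $L$ steps near $jL$ is at most $\exp(-c\,a_{jL}^2 L)$; summing over $j$ gives the $\eps n/L$ prefactor from the first $O(\eps n/L)$ windows, with later windows contributing a lower-order term. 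This is more elementary than your route---no ballot identity, no Berry--Esseen estimate---while your argument, once the i.i.d.\ domination is made explicit, makes the provenance of the extra factor $\eps$ in the prefactor somewhat more transparent.
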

\begin{remark}\label{Runif}
The formal statement is that for every $r\ge 2$ and every pair of functions
$p(n)$ and $L(n)$ satisfying the given conditions, there exist $C>0$ and $n_0$ such that \eqref{L1bd}
holds for all $n\ge n_0$. In other words, the constant $C$ is allowed to depend on the choice of $r\ge 2$, and of the functions
$p=p(n)$ and $L=L(n)$. This type of statement is convenient when it comes to the proof,
since we can just take $p(n)$ and $L(n)$ as given, and not worry about how $C$ depends on them.
However, as usual in such contexts, uniformity over suitable sets of choices
for $p(n)$ and $L(n)$ follows automatically. 
More precisely, given $r\ge 2$ and $A>0$, Theorem~\ref{thsubtail} implies that
there is a constant $C>0$, depending
only on $r$ and $A$, such that
\eqref{L1bd} holds whenever $1-\eps\ge 1/A$, $L\le A\eps n$, and $n$, $\eps^3n$
and $\eps^2L$ are large enough.\footnote{%
Suppose not. Then for each $k=1,2,\ldots$ we may find values $n_k$, $\eps_k$
and $L_k$ 
with $1-\eps_k\ge 1/A$ and $L_k\le A\eps_k n_k$ such that \eqref{L1bd} does not hold for these
values with $C=k$, with, in addition, $\min\{n_k,\eps_k^3n_k,\eps^2_kL_k\}\ge k$.
Passing to a subsequence we may assume that $(n_k)$ is strictly increasing. But now we have
partial functions $\eps(n)$ and $L(n)$ (which we may complete to functions) satisfying the
assumptions of Theorem~\ref{thsubtail}. So there should be some $C$ and $k_0$ such that \eqref{L1bd}
holds for this sequence, i.e., for all $(n_k,\eps_k,L_k)$, $k\ge k_0$.
Considering any $k>\max\{C,k_0\}$ now gives a contradiction.}
\end{remark}

Theorem~\ref{thsubtail} gives a meaningful bound (a bound on the probability that is less than 1)
only when $L$ is at least
some constant times $\log(\eps^3 n)/\eps^2$, which, as shown by Karo\'nski and \L uczak~\cite{KL_giant},
is the typical order of $L_1$.
For us, the most important case is that with $L=\Theta(\eps n)$.
We believe that, apart from the constant in the exponent, the bound given in Theorem~\ref{thsubtail}
is best possible for essentially the entire range to which it applies.

In the supercritical case, we show that $L_1$ is concentrated around its mean, and
that the number $L_2$ of vertices in the second-largest component is unlikely to be large.
\begin{theorem}\label{thsupertail}
Let $r\ge 2$ be fixed, let $p=p(n)=(1+\eps)(r-2)!n^{-r+1}$ where $\eps=O(1)$
and $\eps^3n\to\infty$, and define $\rho_{r,\la}$ as in \eqref{rkldef} with $\la=1+\eps$.
If $\omega=\omega(n)\to\infty$ and 
$\omega=O(\sqrt{\eps^3n})$ then
\begin{equation}\label{st1}
 \Pr\Bb{ |L_1(\Hrnp)-\rho_{r,\la} n| \ge \omega\sqrt{n/\eps} } = \exp(-\Omega(\omega^2)).
\end{equation}
Moreover, if $L=L(n)$ satisfies $\eps^2L\to\infty$ and $L=O(\eps n)$, then there exists $C>0$ such that
\begin{equation}\label{st2}
 \Pr( L_2(\Hrnp) > L ) \le C \frac{\eps n}{L} \exp(-\eps^2L/C)
\end{equation}
for all large enough $n$.
\end{theorem}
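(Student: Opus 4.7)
The plan is to prove both parts via the breadth-first exploration of $\Hrnp$ developed in~\cite{BR_hyp} (following Nachmias--Peres~\cite{NP_giant}), combining it with Freedman's martingale concentration inequality for~(st1), and with the duality identity~\eqref{las} and Theorem~\ref{thsubtail} for~(st2). Recall that the exploration processes vertices one at a time, maintaining active and saturated sets; at step $t$ we query an active vertex, reveal its hyperedges into the unexplored set, and activate the newly found vertices. Writing $Y_t$ for the number of active vertices after $t$ steps started from a vertex $v$, the component $\cC(v)$ is exhausted at the first hitting time of the starting level by $Y_t$.

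For~(st1), decompose $Y_t = A_t + M_t$ into a predictable part and a martingale. After truncating the rare event of an abnormally high-degree vertex, the increments of $M_t$ are $O(1)$ and each conditional variance is $O(1)$, so the total quadratic variation of $(M_t)$ over the giant excursion of length $\approx \rho_{r,\la} n$ is $O(n)$. The predictable drift $A_t - A_{t-1}$ is of order $\eps$ throughout this range, so when the exploration starts inside $\cL_1$ the first-return time $L_1$ obeys the linearisation $L_1 - \rho_{r,\la}n \sim \eps^{-1} M_{\rho_{r,\la}n}$ to leading order. Freedman's inequality gives $\Pr(|M_t| \ge D) \le 2\exp(-\Omega(D^2/n))$ in the Gaussian regime $D = O(n)$. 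Substituting $D = \omega\sqrt{\eps n}$---which corresponds to $|L_1 - \rho_{r,\la}n| \approx \omega\sqrt{n/\eps}$ and is $O(n)$ precisely when $\omega = O(\sqrt{\eps^3 n})$---yields the claimed bound $\exp(-\Omega(\omega^2))$.

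For~(st2), I would continue the exploration past the giant and appeal to Theorem~\ref{thsubtail}. Conditional on starting inside $\cL_1$ and on~(st1), the first excursion finishes at time $\rho_{r,\la}n + \op(n\eps)$; the remaining vertex set $V'$ of size $n' = n - L_1$ then supports an independent copy of $H^r_{n',p}$, whose effective parameter is
\[
 \la(n'/n)^{r-1} \;\sim\; \la(1 - \rho_{r,\la})^{r-1} \;=\; \la(1-\rho_\la) \;=\; \las
\]
by~\eqref{rkldef} and~\eqref{las}, and hence of the form $1-\Theta(\eps)$ by~\eqref{lasrate2}. Applying Theorem~\ref{thsubtail} to this subcritical residual hypergraph gives the bound~(st2). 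The conditioning on starting in $\cL_1$ can be handled either by an averaging argument over the starting vertex, or bypassed altogether via the cruder estimate $\Pr(L_2 > L) \le (n/L)\Pr(L \le |\cC(v)| \le n\rho_{r,\la}/2)$, where the right-hand side admits the same subcritical-exploration tail bound.

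The principal technical obstacles are two routine but finicky steps. In~(st1), one must control the lower-order error in the first-passage linearisation $L_1 - \rho_{r,\la}n \sim \eps^{-1} M_{\rho_{r,\la}n}$ so that its contribution to a deviation of size $\omega\sqrt{n/\eps}$ is swamped by the main $\exp(-\Omega(\omega^2))$ term. In~(st2), the delicate point is that the effective residual parameter fluctuates around $\las$ (driven by the random value of $L_1$), and one must invoke the uniformity in Remark~\ref{Runif} to absorb this. Neither should present serious difficulty given the exploration framework of~\cite{BR_hyp}; the only substantially new ingredient is the quantitative Freedman estimate that replaces the central limit theorem used there.
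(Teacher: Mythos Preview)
Your proposal is correct and follows essentially the same approach as the paper: exploration process, martingale concentration on its fluctuations (the paper proves a tailored Azuma-type lemma rather than citing Freedman, but to the same effect), and duality with Theorem~\ref{thsubtail} for~\eqref{st2}. The one organisational difference is that the paper runs a \emph{sequential} exploration of all components and locates the giant as the excursion between two successive record minima of $(X_t)$, which avoids your conditioning on starting inside $\cL_1$; your linearisation step corresponds to the paper's Lemma~\ref{lT1}, which pins the end of that excursion to a window of width $\omega\sqrt{n/\eps}$ around $\rho_{r,\la}n$ by bounding $\max_t|S_t|$.
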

\begin{remark*}
Again, the constant $C$, and the implicit constant in the $\Omega(\cdot)$ notation in \eqref{st1},
may depend on the choice of the `input' parameters $r\ge 2$, $(p(n))$, $(L(n))$ and $(\omega(n))$.
\end{remark*}

Since $\rho_{r,\la}n=\Theta(\eps n)$, the bound \eqref{st1}
implies in particular that if $\delta=\delta(n)\le 1/2$, say,
and $\delta\sqrt{\eps^3n}\to\infty$, then there is a constant $c>0$ such that
\begin{equation}
 \Pr\Bb{ (1-\delta)\rho_{r,\la}n \le L_1(\Hrnp) \le (1+\delta)\rho_{r,\la}n  } \ge 1- \exp(-c\delta^2\eps^3 n)
\end{equation}
for $n$ large enough. As in Remark~\ref{Runif} above, one can check that this constant
depends only on $r$ and the implicit constant in our assumption $\eps=O(1)$.

For the largest component,
much more precise results are known in the graph case, at least when $\eps=\Theta(1)$:
for $L_1(\Htnp)$, $p=c/n$, O'Connell~\cite{OC} established a `large
deviation principle' tight up to a factor $1+o(1)$ in the exponent in the error probability.
Biskup, Chayes and Smith~\cite{BCS} proved a corresponding
result for the number of vertices in `large' components.

In the subcritical case,
Karo\'nski and \L uczak~\cite{KL_giant} proved
very precise results about the limiting distribution of $L_1$
(essentially a local limit result, but conditional on the probability
$1-o(1)$ event that there are no complex components). Theorem~\ref{thsubtail} neither implies their result
nor is implied by it: instead of considering `typical' values of $L_1$,  we prove that the probability that $L_1$ is considerably larger than such typical values goes to zero rather quickly.

The rest of the paper is organized as follows. We shall prove Theorems~\ref{thsubtail}, \ref{thsupertail}
and~\ref{thglobal2} in this order. First, in Section~\ref{sec_prelim},
we prove some simple lemmas that we shall need later. 
In Section~\ref{sec_expl}, we recall the exploration argument from~\cite{BR_hyp},
and state some basic properties of corresponding random walk.
In Section~\ref{sec_tail} we use this random walk to prove Theorem~\ref{thsubtail}.
Next, in section~\ref{sec_mart}, we describe the approximation of the random
walk by a martingale (as in~\cite{BR_hyp}). We use this to prove Theorem~\ref{thsupertail}
in Section~\ref{sec_sup} and our main result, Theorem~\ref{thglobal2},
in Section~\ref{sec_biv}.

\section{Preliminaries}\label{sec_prelim}

In this section we prove some probabilistic inequalities that will be needed
later. Here (and indeed throughout the paper) we
make no attempt to optimize the various constants that appear, or even to make them explicit.

\begin{lemma}\label{lbintail}
Let $k>0$. There is a constant $K=K(k)$ such that
if $Y\sim \Bi(n,p)$ with $np\le\nu\le k$, and $X$ is a non-negative random variable
with mean $\mu$ that is stochastically dominated by $kY$, then for
$-1\le\theta\le 1$ we have
\[
 \E[ (X-\mu)^2 e^{\theta (X-\mu) }] \le K\nu.
\]
\end{lemma}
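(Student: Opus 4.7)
The plan is to reduce the problem to computing moments of the binomial $Y$ by handling $\theta\ge0$ and $\theta\le0$ separately, using that stochastic domination gives monotone-function inequalities while for negative $\theta$ the exponential is bounded.

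First, I would note that since $\mu=\E[X]\le k\E[Y]=knp\le k\nu\le k^2$, the quantity $\mu$ is bounded by a constant depending only on $k$. I would then split via the crude bound $(X-\mu)^2\le 2X^2+2\mu^2$, so that
\[
 \E\bb{(X-\mu)^2 e^{\theta(X-\mu)}}\le 2\E\bb{X^2 e^{\theta(X-\mu)}} + 2\mu^2\E\bb{e^{\theta(X-\mu)}}.
\]
Since $\mu^2\le k^2\nu^2\le k^3\nu$, it suffices to show each of $\E[X^2 e^{\theta(X-\mu)}]$ and $\E[e^{\theta(X-\mu)}]$ is $O(\nu)$ and $O(1)$ respectively, with constants depending only on $k$.

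For $\theta\in[0,1]$, I would use that $x\mapsto x^2 e^{\theta x}$ and $x\mapsto e^{\theta x}$ are non-decreasing on $[0,\infty)$, so stochastic domination by $kY$ gives
\[
 \E\bb{X^2 e^{\theta(X-\mu)}}\le e^{-\theta\mu}k^2\E\bb{Y^2 e^{\theta k Y}}\quad\text{and}\quad \E\bb{e^{\theta(X-\mu)}}\le e^{-\theta\mu}\E\bb{e^{\theta kY}}.
\]
For $Y\sim\Bi(n,p)$ and $t=\theta k\in[0,k]$, direct computation gives $\E[e^{tY}]=(1-p+pe^t)^n\le e^{np(e^t-1)}\le e^{\nu(e^k-1)}$, and similarly $\E[Y^2 e^{tY}]=n(n-1)p^2e^{2t}(1-p+pe^t)^{n-2}+npe^t(1-p+pe^t)^{n-1}$, which is $O(\nu)$ using $np\le\nu$, $n^2p^2\le\nu^2\le k\nu$, and the already-established bound on the MGF. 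For $\theta\in[-1,0]$, stochastic domination is unhelpful because $e^{\theta x}$ decreases in $x$; instead I would simply bound $e^{\theta(X-\mu)}\le e^{-\theta\mu}\le e^{\mu}\le e^{k^2}$ (using $X\ge 0$ and $\theta\le 0$), and then apply $\E[(X-\mu)^2]=\Var[X]\le\E[X^2]\le k^2\E[Y^2]=k^2(np(1-p)+(np)^2)\le k^2(1+k)\nu$.

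No step is especially subtle: the only thing to watch is the asymmetry between the two signs of $\theta$, since stochastic domination can only be applied in the $\theta\ge 0$ case. Combining these two cases yields a constant $K=K(k)$ that works uniformly for all $\theta\in[-1,1]$, proving the lemma.
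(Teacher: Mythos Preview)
Your proof is correct and follows essentially the same approach as the paper's: split into the cases $\theta\ge 0$ (use monotonicity and stochastic domination to reduce to binomial moments) and $\theta<0$ (bound the exponential by a constant depending on $k$, then control the second moment). The only cosmetic differences are that the paper uses the slightly sharper $(X-\mu)^2\le X^2+\mu^2$ (valid since $X,\mu\ge 0$) rather than your factor-of-2 split, and that for $\theta<0$ the paper recycles the $\theta=0$ case to bound $\E[(X-\mu)^2]$ while you compute it directly.
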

\begin{proof}
For $0\le \alpha\le k$, by the binomial theorem and the standard inequality $1+x\le e^x$
we have
\begin{equation}\label{EaY}
 \E[e^{\alpha Y}]=  \sum_{k=0}^n \binom{n}{k}p^k(1-p)^{n-k}e^{\alpha k}
 = (1-p+p e^{\alpha})^n\le \exp(np(e^\alpha-1)) \le K_1,
\end{equation}
where $K_1=k(e^k-1)$ is a constant depending only on $k$.
Either by differentiating, or by using twice the observation that
$Y\sim \Bi(n,p)$ implies $\E[Yf(Y)]=np\E[f(Z+1)]$
where $Z\sim \Bi(n-1,p)$, we deduce that
\[
 \E[Y e^{\alpha Y}] \le np e^\alpha K_1 \le \nu e^k K_1
\]
and
\begin{equation}\label{EaY2}
 \E[Y^2 e^{\alpha Y}] = \E[Y(Y-1) e^{\alpha Y}] + \E[Ye^{\alpha Y}]
 \le  \nu^2e^{2k}K_1+\nu e^k K_1 \le \nu K_2,
\end{equation}
where $K_2=(ke^{2k}+e^k)K_1$. For $0\le\theta\le 1$,
since $\exp(\theta x)$ and $x^2\exp(\theta x)$
are increasing in $x\ge 0$, we have
\begin{multline}\label{EaX1}
 \E[ (X-\mu)^2 e^{\theta (X-\mu)} ] \le \E[(X-\mu)^2e^{\theta X}]
 \le \E[X^2e^{\theta X}]+\mu^2\E[e^{\theta X}] \\
 \le \E[k^2Y^2e^{\theta kY}]+\mu^2\E[e^{\theta kY}] \le k^2\nu K_2+\mu^2 K_1
 \le k^2\nu K_2+k\nu K_1 =\nu K_3,
\end{multline}
recalling \eqref{EaY} and \eqref{EaY2}, and noting that $\mu=\E[X]\le \E[Y]=np\le\nu\le k$.

Since $\mu\le k$ and $X\ge 0$,
for $-1\le \theta<0$ we have $e^{\theta(X-\mu)}\le e^{-\theta\mu}
\le e^k$, so
\[
 \E[ (X-\mu)^2 e^{\theta(X-\mu)} ] \le e^k\E[(X-\mu)^2] \le e^k \nu K_3,
\]
where in the last step we applied \eqref{EaX1} with $\theta=0$.
This completes the proof of the lemma with $K=e^kK_3$, a constant depending only on $k$.
\end{proof}

Our next lemma is a simple Hoeffding--Azuma-type martingale inequality that is doubtless a
special case of (many) known results. Since the proof is very simple, it seems
easiest just to give it. 

\begin{lemma}\label{lmart}
Let $C>0$ be a real number, and let
$(M_t)_{t=0}^\ell$ be a martingale with respect to the filtration
$(\cF_t)$ with $M_0=0$. Set $\Delta_t=M_t-M_{t-1}$, and suppose that for all
$1\le t\le \ell$ and all $\theta\in [-1,1]$ we have
\begin{equation}\label{Xbd}
 \E[\Delta_t^2 e^{\theta \Delta_t} \mid \cF_{t-1}] \le C \hbox{ almost surely.}
\end{equation}
Then
\begin{equation}\label{aimo}
 \Pr\Bb{ \max_{0\le t\le \ell} |M_t|\ge y } \le 
    2\exp\bb{-y^2/(2\max\{y,C\ell\}) }.
\end{equation}
\end{lemma}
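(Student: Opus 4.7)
The plan is to use the standard exponential-martingale/Chernoff--Doob approach. For any $\theta\ge 0$, the process $e^{\theta M_t}$ is a non-negative submartingale, so Doob's maximal inequality gives
\[
 \Pr\Bb{\max_{0\le t\le\ell}M_t \ge y} \le e^{-\theta y}\,\E[e^{\theta M_\ell}].
\]
By the tower property, $\E[e^{\theta M_\ell}] = \E\bigl[e^{\theta M_{\ell-1}}\,\E[e^{\theta \Delta_\ell}\mid\cF_{\ell-1}]\bigr]$, so if I can show a per-step MGF bound of the form $\E[e^{\theta \Delta_t}\mid\cF_{t-1}]\le e^{\theta^2 C}$ for every $\theta\in[0,1]$, iteration yields $\E[e^{\theta M_\ell}]\le e^{\theta^2 C\ell}$ and hence
\[
 \Pr\Bb{\max_{0\le t\le\ell}M_t \ge y} \le \exp(-\theta y+\theta^2 C\ell)
\]
for all $\theta\in(0,1]$.

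The technical heart is the per-step MGF bound. I would use the elementary Taylor estimate $e^y-1-y\le \tfrac12 y^2 e^{y^+}$ (checked separately for $y\ge 0$ and $y\le 0$, using in the latter case just $e^s\le 1$ in the integral remainder) together with the crude inequality $e^{y^+}\le e^y+e^{-y}$. Applied to $y=\theta\Delta_t$ with $\theta\in[0,1]$, and using $\E[\Delta_t\mid\cF_{t-1}]=0$,
\[
 \E[e^{\theta\Delta_t}\mid\cF_{t-1}] \le 1+\tfrac{\theta^2}{2}\,\E\bigl[\Delta_t^2(e^{\theta\Delta_t}+e^{-\theta\Delta_t})\bigm|\cF_{t-1}\bigr] \le 1+\theta^2 C \le e^{\theta^2 C},
\]
where the penultimate step invokes \eqref{Xbd} at both $+\theta$ and $-\theta$. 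This is precisely the reason the hypothesis is stated symmetrically on $[-1,1]$ rather than just for $\theta\ge 0$.

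The last step is to optimize $\theta\in(0,1]$ in $\exp(-\theta y+\theta^2 C\ell)$. The unconstrained minimum is at $\theta=y/(2C\ell)$: when $y\le 2C\ell$ this is admissible and gives $\exp\bb{-y^2/(4C\ell)}$, while for $y>2C\ell$ the best admissible choice is $\theta=1$, yielding $\exp(-y+C\ell)\le \exp(-y/2)$. Both cases can be written uniformly as $\exp\bb{-y^2/(2\max\{y,2C\ell\})}$, which matches \eqref{aimo} up to a harmless absolute constant in the exponent's denominator. Running the same argument with $-M_t$ in place of $M_t$ handles the lower tail, and a union bound produces the factor $2$ in \eqref{aimo}. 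I do not anticipate a real obstacle here; the only delicate points are the symmetric use of \eqref{Xbd} in the MGF step and the truncation $\theta\le 1$ in the Chernoff optimization, the latter being exactly what forces the $\max\{y,C\ell\}$ (as opposed to a pure $C\ell$) in the denominator of the final exponent.
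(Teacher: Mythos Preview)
Your approach is essentially the same as the paper's---Chernoff plus a per-step MGF bound iterated along the martingale---with two minor differences worth noting.

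First, for the reduction from the running maximum to the endpoint, you use Doob's submartingale inequality applied to $e^{\theta M_t}$; the paper instead stops the martingale at the first time $|M_t|\ge y$ and bounds $\Pr(|M_\ell'|\ge y)$ for the stopped process. Both are standard and equivalent here.

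Second, and more to the point, your Taylor/$e^{y^+}\le e^y+e^{-y}$ route yields $\E[e^{\theta\Delta_t}\mid\cF_{t-1}]\le e^{\theta^2 C}$, whereas the paper obtains the sharper $e^{\theta^2 C/2}$ by a cleaner observation: with $f(\theta)=\E[e^{\theta\Delta_t}\mid\cF_{t-1}]$ one has $f(0)=1$, $f'(0)=0$, and hypothesis~\eqref{Xbd} says precisely that $f''(\theta)\le C$ on $[-1,1]$, so integrating twice gives $f(\theta)\le 1+C\theta^2/2$. This recovers the constant in~\eqref{aimo} exactly (optimizing at $\theta=y/(C\ell)$ when $y\le C\ell$), rather than your $2C\ell$. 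You flagged the discrepancy as harmless, and for every application in the paper it is; but if you want the statement as written, this is how to close the gap.
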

\begin{proof}
By a standard stopping-time argument, to prove \eqref{aimo} it suffices to show that
\begin{equation}\label{aims}
 \Pr( |M_\ell|\ge y ) \le
    2\exp\bb{-y^2/(2\max\{y,C\ell\}) }.
\end{equation}
Indeed, let $\tau=\inf\{t:|M_t|\ge y\}\le\infty$ and consider the stopped martingale defined
by $M_t'=M_{t\wedge\tau}$. (Thus $M_t'=M_t$ for all $t$ if $\tau=\infty$.) This martingale also
satisfies the assumptions of the lemma, and relation \eqref{aims} for $(M_t')$ implies
\eqref{aimo} for $(M_t)$.

If $X$ is any random variable with $\E[X]=0$ satisfying $\E[X^2 e^{\theta X}] \le C$
for all $\theta\in [-1,1]$ then,
defining $f(\theta)=\E[e^{\theta X}]>0$, we have $f(0)=1$, $f'(0)=\E[X]=0$ and,
for $-1\le\theta\le 1$,
\[
 f''(\theta) = \E[X^2 e^{\theta X}] \le C.
\]
It follows that for $-1\le\theta\le 1$ we have
\[
 f(\theta) \le 1+ C\theta^2/2 \le \exp(C\theta^2/2).
\]

For $1\le t\le \ell$ let $\Delta_t=M_t-M_{t-1}$. Then $\E[\Delta_t\mid \cF_{t-1}]=0$
and, by assumption, for $-1\le\theta\le 1$ we have $\E[\Delta_t^2e^{\theta\Delta_t}\mid\cF_{t-1}] \le C$.
It follows that
\[
 \E[ e^{\theta\Delta_t} \mid \cF_{t-1}] \le \exp(C\theta^2/2).
\]
A standard inductive argument now implies that $\E[e^{\theta M_\ell}]\le \exp(C\theta^2\ell/2)$.
Let $y\ge 0$. Then, by Markov's inequality, for $0\le\theta\le 1$ we have
\[
 \Pr(M_\ell\ge y) \le \E[e^{\theta M_\ell}]/e^{\theta y} \le \exp(C\theta^2\ell/2-\theta y).
\]
For $y\le C\ell$, taking $\theta=y/(C\ell)\in [0,1]$ gives $\Pr(M_\ell\ge y)\le \exp(-y^2/(2C\ell))$;
for $y\ge C\ell$, taking $\theta=1$ gives $\Pr(M_\ell\ge y) \le \exp(C\ell/2-y)\le \exp(-y/2)$.
We may bound $\Pr(M_\ell\le -y)$ similarly, using Markov's inequality to show
that for $-1\le\theta\le 0$ we have
\[
 \Pr(M_\ell\le -y) \le \E[e^{\theta M_\ell}]/e^{-\theta y} \le \exp(C\theta^2\ell/2+\theta y),
\]
and then taking  $\theta=-y/(C\ell)$ or $\theta=-1$. This completes the proof \eqref{aims} and hence of the lemma.
\end{proof}

\section{The exploration process and its increments}\label{sec_expl}

Let us briefly recall some of the methods and results of~\cite{BR_hyp},
based on `exploring' the component structure of $\Hrnp$ step-by-step.\footnote{%
We aim for a presentation that is mostly self-contained: we shall need
some specific results from~\cite{BR_hyp} (see Lemmas~\ref{gprops}, \ref{lXXt} and \ref{props} and relation \eqref{T1tX} below), but hope that, taking these
on trust, it should be possible to follow the present paper without
reading~\cite{BR_hyp}. Having said this, there will be a few places
where we shall give a little less detail than we might otherwise have done, since further
detail is given in~\cite{BR_hyp}.}
Explorations of this type have been used on numerous occasions, including
by Martin-L\"of~\cite{ML86}, Karp~\cite{Karp}, Aldous~\cite{Aldous}
and Nachmias and Peres~\cite{NP_giant}. For hypergraphs, the form described
here was used by Behrisch, Coja-Oghlan and Kang~\cite{BC-OK1} and later
by the present authors in~\cite{BR_hyp}; in our opinion, the description
and analysis in~\cite{BR_hyp} is simpler than that in~\cite{BC-OK1}.
For further background, see~\cite{BR_walk}.

Given a hypergraph $H$
with vertex set $[n]$, we `explore' $H$ by revealing its edges in $n$ steps as follows.
In step $1\le t\le n$ we pick a vertex $v_t$ in a way that we shall specify in a moment,
and reveal all edges incident with $v_t$ but not with any of $v_1,\ldots,v_{t-1}$.
After $t$ steps we have `explored' the vertices $v_1,\ldots,v_t$,
and have revealed all edges incident with one or more of these vertices. An unexplored
vertex is `active' if it is incident with one or more revealed edges, and `unseen' otherwise.
We write $\cA_t$ for the set of active vertices after $t$ steps, $\cU_t$ for
the set of unseen vertices, and set $A_t=|\cA_t|$. 
When choosing which vertex to explore next, we pick an active vertex if there is one (according
to any rule), and an unseen vertex otherwise.

Let $0=t_0<t_1<t_2\cdots<t_\ell=n$ enumerate $\{t:A_t=0\}$. Then,
for $1\le i\le \ell$, the set $V_i=\{v_{t_{i-1}+1},\ldots,v_{t_i}\}$
is the vertex set of a component of $H$. Indeed,
for any $t$ such that $A_t=0$ there are no edges joining any $v_i$ with $i\le t$
to any $v_j$ with $j>t$, so $V_i$ is not joined to $[n]\setminus V_i$ in $H$,
and if $A_t>0$ then $v_{t+1}$ is active at time~$t$,
and hence is in some edge containing some $v_i$, $i\le t$; thus the subhypergraph
of $H$ induced by $V_i$ is connected. Hence, for $1\le i\le \ell$, $t_i$ is the step
at which we finish exploring the $i$th component of $H$.

Let
\[ 
 C_t=|\{i:0\le i<t,\, A_i=0\}|
\]
be the number of components that we have started to explore within
the first $t$ steps, and define $X_t=A_t-C_t$. As we shall see in a moment, the increments
of the process $(X_t)$ are simpler to understand that those of $(A_t)$, so, as in~\cite{BR_hyp},
we shall primarily study $(X_t)$. We can read off the component sizes from the trajectory
of $(X_t)$ without too much trouble. Indeed, since $C_t=1$ for $t=1,2,\ldots,t_1$,
we have $X_t=A_t-C_t\ge -1$ in this range with equality only at $t=t_1$.
Similarly, $X_t$ reaches a new `record low' value $-i$ at time $t_i$: 
\[
 t_i = \inf\{t:X_t=-i\}.
\]

Let $\eta_t$ be the number of vertices in $\cU_{t-1}\setminus\{v_t\}$ that become active in step $t$,
i.e., are contained in one or more hyperedges containing $v_t$ and none of $v_1,\ldots,v_{t-1}$.
In step $t$, exactly $\eta_t$ vertices become active. Moreover, either one vertex $v_t$
that was previously active ceases to be active, or we start a new component and so
$C_t=C_{t-1}+1$. In either case, $X_t-X_{t-1}=\eta_t-1$, so by induction
\begin{equation}\label{Xsum}
 X_t= \sum_{i=1}^t (\eta_i -1).
\end{equation}

So far, we have not specified the hypergraph $H$ that we are exploring. From now on, we take $H=\Hrnp$.
Let $\cF_t$ be the $\sigma$-algebra generated by all information revealed up to step $t$
of the exploration process.
This exploration process, the associated filtration $(\cF_t)$,
and the random sequences
$(X_t)$, $(\eta_t)$ and (to a lesser extent) $(A_t)$ and $(C_t)$ will be the tools
that we use throughout the paper to study $\Hrnp$.

We have not yet specified the function $p=p(n)$; we shall impose different assumptions
in different sections. But throughout the paper, we take $r\ge 2$ constant,
and assume that $p=p(n)=\Theta(n^{-r+1})$.

\begin{lemma}\label{cdist}
The distribution of $\eta_t$ conditional on $\cF_{t-1}$ is stochastically dominated by $r-1$
times a binomial random variable with mean
\[
 \binom{n-t}{r-1}p \le \binom{n}{r-1}p=O(1).
\]
\end{lemma}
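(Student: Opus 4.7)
The plan is to unwind the definitions carefully and observe that step $t$ of the exploration only probes edges that have not been touched in any prior step. I would first identify which edges are at issue: an edge that can possibly be revealed in step $t$ must contain $v_t$ and must be disjoint from $\{v_1,\ldots,v_{t-1}\}$. Writing $W_t=[n]\setminus\{v_1,\ldots,v_t\}$, a set of size $n-t$, such an edge consists of $v_t$ together with an $(r-1)$-subset of $W_t$, so there are exactly $\binom{n-t}{r-1}$ potential edges of this type. Every edge of $\Hrnp$ not revealed by step $t-1$ has the property that it either contains none of $v_1,\ldots,v_{t-1}$ (in which case it is disjoint from the exploration so far) or it contains some $v_i$ with $i\le t-1$ \emph{and} has already been revealed in step $i$. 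Hence the $\binom{n-t}{r-1}$ edges of interest at step $t$ form a sub-family of the still-untested edges, and conditional on $\cF_{t-1}$ each of them is independently present with probability $p$, even though $v_t$ itself is $\cF_{t-1}$-measurable.

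Next I would let $B_t$ denote the number of these edges that are actually present, so that $B_t\mid\cF_{t-1}\sim\Bi\bb{\binom{n-t}{r-1},p}$. Each such present edge contributes $r-1$ vertices other than $v_t$; those vertices lie in $W_t$, which is the disjoint union $(\cU_{t-1}\setminus\{v_t\})\cup(\cA_{t-1}\setminus\{v_t\})$, and only the ones in $\cU_{t-1}\setminus\{v_t\}$ count towards $\eta_t$. Since $\eta_t$ counts \emph{distinct} such vertices, and each revealed edge contributes at most $r-1$ of them, we obtain the pointwise bound $\eta_t\le (r-1)B_t$. This is precisely stochastic domination of $\eta_t\mid\cF_{t-1}$ by $(r-1)B_t$, where $B_t\mid\cF_{t-1}$ has the binomial distribution named in the lemma.

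Finally, for the mean bound, $\binom{n-t}{r-1}p\le\binom{n}{r-1}p$ is immediate, and the standing assumption $p=\Theta(n^{-r+1})$ together with $\binom{n}{r-1}=\Theta(n^{r-1})$ yields $\binom{n}{r-1}p=O(1)$.

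No serious obstacle appears here; the only point that needs slight care is the independence assertion, namely that conditional on $\cF_{t-1}$ the $\binom{n-t}{r-1}$ edges in question are still an independent $p$-random subset. This follows from the bookkeeping of the exploration: the information encoded in $\cF_{t-1}$ is the presence/absence of every edge meeting $\{v_1,\ldots,v_{t-1}\}$, and the edges counted by $B_t$ are, by definition, disjoint from this set.
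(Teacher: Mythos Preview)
Your proof is correct and follows essentially the same approach as the paper: identify the $\binom{n-t}{r-1}$ untested $r$-sets containing $v_t$ but none of $v_1,\ldots,v_{t-1}$, note that conditional on $\cF_{t-1}$ the number $B_t$ of them present is binomial with the stated parameters, and bound $\eta_t\le (r-1)B_t$ pointwise. The paper's argument is the same, just more tersely stated.
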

\begin{proof}
In step $t$ we test exactly $\binom{n-t}{r-1}$ $r$-sets to see
whether they are edges of $H$, namely all $r$-sets including $v_t$ but none of $v_1,\ldots,v_{t-1}$.
None of these $r$-sets has been previously tested, so the random number $E_t$ of edges that we find has a binomial distribution
with mean $\binom{n-t}{r-1}p\le \binom{n}{r-1}p = O(1)$. The number $\eta_t$
of new active vertices is at most $(r-1)E_t$, with equality if and only if these edges
intersect only at $v_t$, and contain no previously active vertices other than $v_t$.
\end{proof}

In the rest of the paper we shall work with the Doob decomposition of the sequence $(X_t)$.
Set
\begin{align}
 D_t &= \E[\eta_t-1\mid \cF_{t-1}] \hbox{\quad and} \nonumber \\
 \Delta_t &= \eta_t-1-D_t = \eta_t-\E[\eta_t\mid \cF_{t-1}], \label{Dtdef}
\end{align}
so by definition $\E[\Delta_t\mid \cF_{t-1}]=0$ and, from \eqref{Xsum},
\[
 X_t= \sum_{i=1}^t (D_i+\Delta_i).
\]
Then $(\Delta_t)$ is by definition a martingale difference sequence with respect to the filtration $(\cF_t)$. We note two simple properties of the distribution of $\Delta_t$ which will be useful later.

\begin{lemma}\label{cvar}
Suppose that $p=p(n)=\la(n) (r-2)! n^{-r+1}$ with $\la(n)=\Theta(1)$.
Then there is a constant $C$ such that for all $n$
and all $1\le t\le n$ we have
\[
 \Var[\Delta_t\mid \cF_{t-1}] \le C
\]
with probability 1. Furthermore, if $t=t(n)=o(n)$ and $a=a(n)=o(n)$ then
\begin{equation}\label{Vsim}
 \Var[\Delta_t \mid \cF_{t-1}] \sim \la(r-1) \hbox{ whenever } A_{t-1}\le a.
\end{equation}
\end{lemma}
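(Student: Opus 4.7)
The first step is to observe that $\eta_t - \Delta_t = \E[\eta_t\mid\cF_{t-1}]$ is $\cF_{t-1}$-measurable, so $\Var[\Delta_t\mid\cF_{t-1}] = \Var[\eta_t\mid\cF_{t-1}]$ almost surely, and it suffices to study $\Var[\eta_t\mid\cF_{t-1}]$.

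For the uniform bound, I would invoke Lemma~\ref{cdist}: conditionally on $\cF_{t-1}$, $\eta_t$ is stochastically dominated by $(r-1)E_t$, where $E_t\sim\Bi(\binom{n-t}{r-1},p)$ and $\binom{n-t}{r-1}p\le \binom{n}{r-1}p$ is bounded by an absolute constant (as $p=\Theta(n^{-r+1})$ and $\la=\Theta(1)$). Since stochastic domination between non-negative variables preserves second moments,
\[
 \Var[\eta_t\mid\cF_{t-1}] \le \E[\eta_t^2\mid\cF_{t-1}] \le (r-1)^2\E[E_t^2],
\]
and the right-hand side is bounded by a constant depending only on $r$ and the implicit constants in $\la=\Theta(1)$.

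For the asymptotic statement, I would compute $\Var[\eta_t\mid\cF_{t-1}]$ directly. Write $\eta_t=\sum_{u\in\cU_{t-1}\setminus\{v_t\}} Y_u$ with $Y_u=\ind{u\text{ becomes active at step }t}$, and for a fixed pair $u\ne u'$ in $\cU_{t-1}\setminus\{v_t\}$ partition the potential edges tested at step $t$ (those containing $v_t$ and avoiding $v_1,\ldots,v_{t-1}$) into three independent groups: those containing $u$ but not $u'$; those containing $u'$ but not $u$; and those containing both. Let $A$, $B$, $C$ be the events that each group contains at least one present edge, with $p_A=p_B=1-(1-p)^{\binom{n-t-2}{r-2}}$ and $p_C=1-(1-p)^{\binom{n-t-2}{r-3}}$ (using the convention $\binom{m}{-1}=0$, so $p_C=0$ when $r=2$). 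Then $\{Y_u=1\}=A\cup C$ and $\{Y_{u'}=1\}=B\cup C$, and a short expansion yields
\[
 \Covar[Y_u,Y_{u'}\mid\cF_{t-1}] = (1-p_A)^2 p_C(1-p_C).
\]
Under the hypotheses $t=o(n)$ and $A_{t-1}\le a=o(n)$, we have $N:=|\cU_{t-1}\setminus\{v_t\}|=n(1-o(1))$, and Taylor expansions using $p=\la(r-2)!n^{-r+1}$ give $q:=\Pr(Y_u=1\mid\cF_{t-1})\sim \la/n$ and (for $r\ge 3$) $p_C\sim \la(r-2)/n^2$. Assembling, since the diagonal and off-diagonal contributions to $\Var[\eta_t\mid\cF_{t-1}]$ are
\[
 Nq(1-q) + N(N-1)(1-p_A)^2 p_C(1-p_C) \sim \la + \la(r-2) = \la(r-1),
\]
we obtain \eqref{Vsim}, uniformly over $\cF_{t-1}$ with $A_{t-1}\le a$.

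The calculation is routine rather than conceptual; the main care needed is in the covariance bookkeeping, where the contribution of pairs $u\ne u'$ must produce exactly the extra factor $r-2$ (vanishing for $r=2$) that, together with the diagonal $Nq\sim\la$, gives the clean value $\la(r-1)$. Throughout I would rely on the fact that $p=O(n^{-r+1})$ keeps all correction terms of relative order $O(1/n)$ or smaller.
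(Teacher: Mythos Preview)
Your argument is correct. The first part (the uniform bound) is exactly the paper's argument, using the stochastic domination from Lemma~\ref{cdist}.

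For the asymptotic \eqref{Vsim}, the paper takes a different, shorter route: it couples $\eta_t$ with $(r-1)E_t$, where $E_t$ is the number of edges revealed at step $t$ (binomial with mean $\binom{n-t}{r-1}p$), and observes that when $t=o(n)$ and $A_{t-1}=o(n)$ the two coincide with probability $1-o(1)$ (the only obstructions are two revealed edges sharing a vertex other than $v_t$, or a revealed edge meeting $\cA_{t-1}\setminus\{v_t\}$); combined with the domination $\eta_t\le(r-1)E_t$ this gives $\Var[\eta_t\mid\cF_{t-1}]\sim (r-1)^2\Var[E_t]\sim (r-1)^2\cdot\la/(r-1)=\la(r-1)$. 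Your direct second-moment calculation is more explicit and arguably more informative: it shows that the diagonal term $\sum_u\Var[Y_u]$ contributes $\la$ while the off-diagonal covariances contribute exactly $\la(r-2)$, so one sees precisely how the `clumping' of $r-1$ new vertices per edge generates the extra factor. The paper's coupling is slicker; your computation is self-contained and avoids any appeal to a coupling. Both are standard and both work.
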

\begin{proof}
Condition on $\cF_{t-1}$. By Lemma~\ref{cdist}, the conditional distribution $X$ of $\eta_t$ is
stochastically dominated by $(r-1)Y$ where $Y\sim \Bi(\binom{n}{r-1},p)$.
Hence, writing $N=\binom{n}{r-1}$, we have
\begin{multline*}
 \Var[\Delta_t\mid\cF_{t-1}] = \Var[\eta_t\mid \cF_{t-1}] \le \E[\eta_t^2\mid \cF_{t-1}] \\
 = \E[X^2]  \le (r-1)^2\E[Y^2] = (r-1)^2\bb{N(N-1)p^2+Np}=O(1),
\end{multline*}
proving the first statement.

For the second, when $t=o(n)$ and we have $A_{t-1}=o(n)$ active vertices, it is easy to see that $X$
and $(r-1)Y$ are equal with probability $1-o(1)$. (The probability that any of the $(r-1)Y$
vertices are `duplicates' or lie in $A_{t-1}$ is $o(1)$.) This, together with stochastic
domination, implies that $\Var[X]\sim (r-1)^2\Var[Y]$. But $\Var[Y]$ is just $Np(1-p)\sim Np\sim \la/(r-1)$.
\end{proof}
Note that if $a(n)=o(n)$ then, by
by considering worst-case values, one can check that
the estimate \eqref{Vsim} holds uniformly over all $0\le t\le a(n)$ and all points
in the sample space at which $A_{t-1}\le a(n)$.

\section{The subcritical tail bound}\label{sec_tail}

In this section we prove the easiest of our main results, Theorem~\ref{thsubtail}; for this
we use simpler methods than those in~\cite{BR_hyp}. 

\begin{proof}[Proof of Theorem~\ref{thsubtail}]
Let $r\ge 2$ be fixed and let $p=p(n)=(1-\eps)(r-2)!n^{-r+1}$ where $\eps^3n\to\infty$
and $1-\eps$ is bounded away from $0$. Fix a function $L=L(n)$ satisfying
$\eps^2L\to\infty$ and $L=O(\eps n)$. Our aim is to show that for $n$ large enough we have
\[
 \Pr( L_1(\Hrnp) > L ) \le C \frac{\eps n}{L} \exp(-c \eps^2L),
\]
for some constants $c,C>0$ that may depend on all the choices made so far, just not (of course)
on $n$.

We explore the random hypergraph $\Hrnp$ as in Section~\ref{sec_expl}, defining the filtration
$(\cF_t)$ and random sequences $(X_t)$, $(\eta_t)$, $(A_t)$ and $(C_t)$
as in that section.
Recall also the definition \eqref{Dtdef} of $(D_t)$ and $(\Delta_t)$.
By Lemma~\ref{cdist},
\[
 \E[\eta_t\mid\cF_{t-1}] \le (r-1)\binom{n-t}{r-1}p \le \frac{(n-t)^{r-1}}{(r-2)!}p = (1-\eps)(1-t/n)^{r-1}.
\]
Let
\begin{equation}\label{atdef}
 a_t =  \eps +(1-\eps) t/n.
\end{equation}
Then, crudely,
\begin{equation}\label{Dt}
 D_t =\E[\eta_t-1\mid \cF_{t-1}] \le (1-\eps)(1-t/n)^{r-1}-1 
 \le (1-\eps)(1-t/n)-1 
 = -a_t.
\end{equation}
Note that $D_t$ is a random variable, but this deterministic bound holds with probability $1$.
Let
\[
 M_t = \sum_{i=1}^t \Delta_i = X_t -\sum_{i=1}^t D_i,
\]
so $(M_t)$ is a martingale with respect to $(\cF_t)$.  Since the sequence $(a_t)$
is increasing, from \eqref{Dt} we see that for $t_1<t_2$ we have
\begin{multline}\label{Mdiff}
 M_{t_2}-M_{t_1}
  = X_{t_2}-X_{t_1} - \sum_{t=t_1+1}^{t_2} D_t 
\ge X_{t_2}-X_{t_1} + \sum_{t=t_1+1}^{t_2} a_t \\
\ge X_{t_2}-X_{t_1} + (t_2-t_1)a_{t_1}.
\end{multline}

Suppose that $L_1(\Hrnp)>L$.
Then there is some $t$ (one less than the time at which we first start exploring a component with more than $L$
vertices) such that $A_t=0$, $A_{t+L}\ge 1$, and $C_{t+L}=C_{t+1}=C_t+1$. Thus
$X_{t+L}\ge X_t$.
For $j\ge 0$ let $\cE_j$ denote the event that there is a $t$
in the interval $jL\le t<(j+1)L$ with $X_{t+L}\ge X_t$. What we have just noted
tells us that
\[
 \Pr(L_1(\Hrnp)>L) \le \sum_{j=0}^\infty \Pr(\cE_j),
\]
so to complete the proof it suffices to bound the sum above.

If $\cE_j$ holds, then by definition there is a $t\in [jL,(j+1)L]$ such that $X_{t+L}\ge X_t$.
Then, by \eqref{Mdiff}, we have
\begin{equation}\label{Md2}
 M_{t+L}-M_t \ge L a_t \ge L a_{jL}.
\end{equation}
Consider the martingale $(M'_k)$ defined by $M_k'=M_{jL+k}-M_{jL}$, $k=0,\ldots,2L$.
If \eqref{Md2} holds then $M'_{t+L-jL}-M'_{t-jL}\ge La_{jL}$, so by the triangle inequality
$\max\{|M'_{t+L-jL}|,|M'_{t-jL}|\}\ge La_{jL}/2$.
Since $0\le t-jL\le L$, we find that if $\cE_j$ holds, then
\[
\max_{0\le k\le 2L}|M_k'|\ge L a_{jL}/2.
\]
By Lemmas~\ref{lbintail} and~\ref{cdist}, the martingale differences $\Delta_t=M_t'-M_{t-1}'$, $1\le t\le 2L$,
satisfy the hypothesis \eqref{Xbd} of Lemma~\ref{lmart} for some constant $C>0$.\footnote{In principle, as we have phrased the argument, $C$ may depend on the choice of $r\ge 2$ and also on the choice
of the function $p(n)$. Since we assume $p(n)\le (r-2)!n^{-r+1}$, it is not hard
to see that $C$ depends only on $r$.}
We may of course assume that $C\ge 1/4$. Then, by Lemma~\ref{lmart},
applied with $\ell=2L$ and $y=La_{jL}/2\le L/2\le 2CL=C\ell$, we have
\[
 \Pr(\cE_j) \le 2\exp\left(-\frac{y^2}{2C\ell}\right) = 2\exp\left(-\frac{L^2 a_{jL}^2/4}{4CL}\right)
 = 2\exp(-c a_{jL}^2L)
\]
where $c=1/(16C)$ is a positive constant.
From \eqref{atdef},
\[
 a_{jL}=\eps+(1-\eps)jL/n\ge \max\{\eps,(1-\eps)jL/n\}.
\]
Recalling that $1-\eps$ is bounded away from zero by assumption, and considering
$j<\eps n/L+1$ and $j\ge \eps n/L+1$ separately, it follows that
\begin{equation}\label{sjE}
 \sum_j \Pr(\cE_j) \le 2\left\lceil\frac{\eps n}{L}\right\rceil \exp(-c \eps^2L)
  + 2\sum_{j\ge \eps n/L+1} \exp(-c' j^2L^3/n^2),
\end{equation}
for some constant $c'>0$. Clearly
\begin{eqnarray*}
 \sum_{j\ge \eps n/L+1} \exp(-c' j^2L^3/n^2) &\le&
  \sum_{j\ge \eps n/L+1} \exp(-c' j\eps L^2/n) \\
 &\le& \exp(-c' \eps^2 L) \sum_{j\ge 1} \exp(-c' j\eps L^2/n),
\end{eqnarray*}
and if $x>0$ then
\[
 0< \sum_{j\ge 1} e^{-jx} = \frac{e^{-x}}{1-e^{-x}} = \frac{1}{e^x-1} < \frac{1}{x}.
\]
It follows that
\[
  \sum_{j\ge \eps n/L+1} \exp(-c' j^2L^3/n^2) = 
O\left(\frac{n}{\eps L^2}\right) \exp(-c' \eps^2 L).
\]
Finally, by assumption $\eps^2L\to\infty$, so $n/(\eps L^2) = o(\eps n/L)$ and,
from \eqref{sjE},
\[
 \sum_j \Pr(\cE_j) = O\left(\frac{\eps n}{L}\right) \exp\bb{-\min\{c,c'\} \eps^2 L},
\]
completing the proof of Theorem~\ref{thsubtail}.
\end{proof}

\section{Martingale approximation}\label{sec_mart}

In preparation for the proof of Theorem~\ref{thsupertail}, we recall and extend
some results from~\cite{BR_hyp}, approximating the random sequence
$(X_t)_{t=0}^n$ by the sum of a certain deterministic sequence and a martingale.

For the rest of the paper we make the following assumption.
\begin{assumption1}\label{A1}
The integer $r\ge 2$ is fixed, $\eps=\eps(n)$ is a function satisfying $\eps>0$, $\eps=O(1)$
and $\eps^3 n\to\infty$. Furthermore, $\la=\la(n)=1+\eps$ and $p=p(n)=\la (r-2)!n^{-r+1}$.
\end{assumption1}
As discussed in Remark~\ref{Runif}, all new constants introduced may depend on the choice of $r$
and of the function $\eps(n)$.

We start with some definitions, following the notation in~\cite{BR_hyp}.
Firstly, for $1\le t\le n$, set
\[
 \alpha_t= p \binom{n-t-1}{r-2}.
\]
Note that for all $t$ we have $0\le \alpha_t\le p\binom{n}{r-2}=O(1/n)$,
so in particular $\max_t\alpha_t<1/2$, say, if $n$ is large enough.
Let
\begin{equation}\label{bdef}
 \beta_t=\prod_{i=1}^{t}(1-\alpha_i).
\end{equation}
Then
\begin{equation}\label{bsmall}
 \beta_t = \exp(-O(t/n))
\end{equation}
uniformly in $0\le t\le n$. In particular, there is a constant $\beta>0$
such that for $n$ large enough,
\[
 \beta\le \beta_t\le 1
\]
for all $0\le t\le n$.
Set
\[
 x_t=x_{n,t}=n-t-n\beta_t.
\]
We showed in~\cite{BR_hyp} that this deterministic sequence is a good approximation
to the expected trajectory of the random process $(X_t)_{0\le t\le n}$,
and that $(x_t)$ is in turn well approximated by a certain (convex) continuous function.
We now give the details of these approximations.

Given an integer $r\ge 2$ and a positive real number $\la$, define
the function $g=g_{r,\la}$ on $[0,1]$ by
\begin{equation}\label{gdef}
 g(\tau)=g_{r,\la}(\tau) = 1-\tau-\exp\left(-\frac{\la}{r-1}(1-(1-\tau)^{r-1})\right).
\end{equation}
Since $\la$ depends on $n$, we have a different function $g_n$ for each $n$. As usual,
we suppress the dependence on $n$ in the notation.

\begin{lemma}\label{gprops}
Suppose that our Standard Assumption~\ref{A1}, holds.
Define a function $g=g_n$ as in \eqref{gdef}. Then
\begin{equation}\label{xg}
 x_t = n g(t/n)+O(1)
\end{equation}
uniformly in $0\le t\le n$.
Also,
\begin{equation}\label{g0}
 g(0)=0, \quad g'(0)=\la-1, \quad g''(\tau)\le 0, \hbox{\quad and\quad} \sup_{\tau\in [0,1]}|g''(\tau)|=O(1),
\end{equation}
and, writing $\rho$ for $\rho_{r,\la}$,
\begin{equation}\label{grho}
 g(\rho)=0\hbox{\quad and\quad}g'(\rho)=-(1-\las) = -\Theta(\eps).
\end{equation}
\end{lemma}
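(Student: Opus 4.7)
The plan is to verify the four claims by direct calculation, with the expansion of $\log\beta_t$ doing the heavy lifting for \eqref{xg}, and straightforward differentiation of $g$ yielding \eqref{g0} and \eqref{grho}.

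For \eqref{xg}, I would write $\log\beta_t = \sum_{i=1}^t \log(1-\alpha_i) = -\sum_{i=1}^t \alpha_i + O\bb{\sum_{i=1}^t \alpha_i^2}$, using that each $\alpha_i=O(1/n)$ by the Standard Assumption. The error term is $O(t/n^2)=O(1/n)$. The main sum can be evaluated exactly by the hockey stick identity: $\sum_{i=1}^t \binom{n-i-1}{r-2} = \binom{n-1}{r-1} - \binom{n-t-1}{r-1}$. Multiplying by $p=\la (r-2)!n^{-r+1}$ and using that for fixed $r$ the polynomial $\prod_{j=1}^{r-1}(m-j) = m^{r-1}+O(m^{r-2})$ uniformly in $m\in[0,n]$, one obtains $\sum_{i=1}^t \alpha_i = \frac{\la}{r-1}\bb{1-(1-t/n)^{r-1}} + O(1/n)$. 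Exponentiating gives $\beta_t = \exp\bb{-\frac{\la}{r-1}(1-(1-t/n)^{r-1})}(1+O(1/n))$, and multiplying by $n$ and subtracting from $n-t$ yields $x_t = ng(t/n)+O(1)$.

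For \eqref{g0}, I would set $h(\tau)=-\frac{\la}{r-1}\bb{1-(1-\tau)^{r-1}}$ so that $g(\tau)=1-\tau-e^{h(\tau)}$. Note $h(\tau)\le 0$ on $[0,1]$, so $e^{h(\tau)}\le 1$. Direct differentiation gives $h'(\tau)=-\la(1-\tau)^{r-2}$ and then $g'(\tau)=-1+\la(1-\tau)^{r-2}e^{h(\tau)}$, so $g(0)=0$ and $g'(0)=\la-1$. A second differentiation yields
\[
 g''(\tau) = -\la e^{h(\tau)}\bb{(r-2)(1-\tau)^{r-3}+\la(1-\tau)^{2r-4}},
\]
which is manifestly $\le 0$ (with the $r=2$ case reducing to $-\la^2 e^{h(\tau)}$), and since $\la=O(1)$ and all factors in the bracket are bounded on $[0,1]$, we get $\sup_\tau|g''(\tau)|=O(1)$.

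For \eqref{grho}, the key algebraic identity is that by \eqref{rldef} and \eqref{rkldef}, $(1-\rho_{r,\la})^{r-1}=1-\rho_\la=e^{-\la\rho_\la}$, so $1-\rho_{r,\la}=\exp(-\la\rho_\la/(r-1))=e^{h(\rho_{r,\la})}$, which is exactly $g(\rho_{r,\la})=0$. Substituting back into the formula for $g'$, we then have $g'(\rho_{r,\la})=-1+\la(1-\rho_{r,\la})^{r-2}\cdot(1-\rho_{r,\la})=-1+\la(1-\rho_{r,\la})^{r-1}=-1+\la(1-\rho_\la)=-1+\las$ by \eqref{las}. Finally, $-(1-\las)=-\Theta(\eps)$ by \eqref{lasrate2}. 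No step here looks like a genuine obstacle; the only bookkeeping worth checking carefully is the uniform $O(1/n)$ error in the approximation of $\sum\alpha_i$, which requires tracking that the polynomial correction $\prod_{j=1}^{r-1}(n-t-j)-(n-t)^{r-1}$ divided by $n^{r-1}$ is $O(1/n)$ uniformly for $t\in[0,n]$, but this is immediate since $r$ is fixed.
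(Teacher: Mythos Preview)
Your proposal is correct and follows essentially the same approach as the paper, which simply says ``elementary calculation'' and refers to~\cite{BR_hyp} for the details of \eqref{xg} and \eqref{g0}, and to \eqref{rkldef} and \eqref{lasrate2} for \eqref{grho}. You have in fact written out precisely those omitted calculations, including the hockey-stick evaluation of $\sum\alpha_i$, the concavity computation for $g''$, and the identity $e^{h(\rho_{r,\la})}=1-\rho_{r,\la}$ via \eqref{rldef}, \eqref{rkldef} and \eqref{las}.
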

\begin{proof}
The proof is just elementary calculation. The calculations giving \eqref{xg}
and \eqref{g0} are described in~\cite{BR_hyp} (see equations (15) and (16) there),
so we omit them.
The final statement \eqref{grho} follows easily from
from \eqref{rkldef}, simple calculations and, for the final equality,
\eqref{lasrate2} (recalling that $\la$ is bounded by assumption).
\end{proof}

\begin{corollary}\label{cgvals}
Suppose that our Standard Assumption~\ref{A1} holds. Then 
there are constants $0<c_2,c_3<1$ (which may depend as usual on the choice
of the function $\eps(n)$, but not on $n$) such that for all $n$
and all $0\le \tau\le c_2\eps$ we have
\begin{equation}\label{gvals}
 g(\tau) \ge c_3\eps\tau,\hbox{\quad} g(\rho-\tau)\ge c_3\eps\tau,
\hbox{\quad and\quad} g(\rho+\tau) \le -c_3\eps\tau,
\end{equation}
where $g=g_n$ is defined in \eqref{gdef}.
\end{corollary}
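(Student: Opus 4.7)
The claim is a direct consequence of the three properties of $g$ recorded in Lemma~\ref{gprops}: the boundary values $g(0)=g(\rho)=0$, the first derivatives $g'(0)=\eps$ and $g'(\rho)=-(1-\las)=-\Theta(\eps)$, and the uniform bounds $g''\le 0$ and $\sup|g''|=O(1)$. Let $K$ be a constant with $|g''|\le K$ on $[0,1]$, and let $c_0>0$ be a constant such that $-g'(\rho)=1-\las \ge c_0\eps$ for all $n$ large enough (such a $c_0$ exists by \eqref{lasrate2}).

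For the bound on $g(\tau)$, I would use Taylor's theorem at $0$: there is $\xi\in(0,\tau)$ with
\[
g(\tau) = g(0)+g'(0)\tau + \tfrac{1}{2}g''(\xi)\tau^2 \ge \eps\tau - \tfrac{K}{2}\tau^2 = \tau\bb{\eps - \tfrac{K}{2}\tau}.
\]
Hence for $\tau \le \eps/K$ we get $g(\tau)\ge \tfrac{1}{2}\eps\tau$. For the bound on $g(\rho-\tau)$, I would apply Taylor at $\rho$ in the same way: there is $\xi\in(\rho-\tau,\rho)$ with
\[
g(\rho-\tau) = g(\rho) - g'(\rho)\tau + \tfrac{1}{2}g''(\xi)\tau^2 \ge c_0\eps\tau - \tfrac{K}{2}\tau^2,
\]
which is at least $\tfrac{1}{2}c_0\eps\tau$ provided $\tau\le c_0\eps/K$. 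For the bound on $g(\rho+\tau)$, I would exploit the concavity of $g$ ($g''\le 0$) directly: the tangent line at $\rho$ lies above the graph, so
\[
g(\rho+\tau) \le g(\rho)+g'(\rho)\tau = -(1-\las)\tau \le -c_0\eps\tau.
\]

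Taking $c_3 = \tfrac{1}{2}\min\{1,c_0\}$ and $c_2 = \tfrac{1}{2K}\min\{1,c_0\}$ (shrunk further if necessary to ensure $c_2<1$ and that $\rho\pm c_2\eps$ stays inside $[0,1]$, which is easy since $\rho=\Theta(\eps)$ and $\eps=O(1)$) then yields the three inequalities simultaneously.

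There is no real obstacle here: the whole proof is two Taylor expansions and one use of concavity. The only minor care is to verify that $\rho-c_2\eps\ge 0$, which requires knowing $\rho\ge c\eps$ for some constant $c>0$; this follows from \eqref{rsasymp} when $\eps\to 0$ and from continuity of $\la\mapsto\rho_{r,\la}$ on any compact subinterval of $(1,\infty)$ otherwise, so by choosing $c_2$ small enough compared to $c$ the point $\rho-c_2\eps$ remains in $[0,\rho]$.
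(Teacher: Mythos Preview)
Your proof is correct and is exactly the argument the paper has in mind: the paper's own proof reads simply ``Immediate from \eqref{g0} and~\eqref{grho},'' and what you have written is precisely the Taylor-plus-concavity unpacking of that remark. There is nothing to add.
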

\begin{proof}
Immediate from \eqref{g0} and~\eqref{grho}.
\end{proof}

We resume our analysis of the exploration process,
filtration $(\cF_t)$, and random sequences $(X_t)$ and $(\eta_t)$
introduced in Section~\ref{sec_expl}, next considering the martingale approximation to $(X_t)$.
Define $\beta_t=\beta_{n,t}$ as in \eqref{bdef}, and $\Delta_t=\eta_t-\E[\eta_t\mid\cF_{t-1}]$
as in \eqref{Dtdef}.
Set
\begin{equation}\label{Stdef}
 S_t= \sum_{i=1}^t\beta_i^{-1}\Delta_i\hbox{\quad and\quad} \tX_t=x_t+\beta_tS_t.
\end{equation}
Then $(S_t)$ is a martingale with respect to $(\cF_t)$, since $\beta_i$ is
deterministic and $\Delta_i$ is $\cF_i$-measurable with $\E[\Delta_i\mid \cF_{i-1}]=0$.
It follows that $(S_t)$ is a unlikely to be very large.

\begin{lemma}\label{Sconc}
Suppose that $r\ge 2$ is fixed and $p=p(n)=\Theta(n^{-r+1})$.
For any $1\le t=t(n)\le n$ and $y=y(n)=O(t)$ 
we have
\[
 \Pr\bb{ \max_{i\le t} |S_i| \ge y} \le 2\exp(-\Omega(y^2/t)).
\]
\end{lemma}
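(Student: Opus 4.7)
The plan is to apply Lemma~\ref{lmart} directly to the martingale $(S_t)$, whose differences are $\tilde\Delta_t := \beta_t^{-1}\Delta_t$. The only substantive step is to verify the conditional moment hypothesis \eqref{Xbd}; once we have this with some constant $C$, the stated bound follows by taking $\ell = t$ and using $y = O(t)$ to see that $\max\{y, Ct\} = \Theta(t)$.

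The key observation is that the weights $\beta_t^{-1}$ are uniformly bounded. Indeed, by \eqref{bsmall} and the remark immediately following it, there is a constant $\beta > 0$ such that $\beta \le \beta_t \le 1$ for all $0 \le t \le n$ and all sufficiently large $n$; hence $\beta_t^{-1} \le \beta^{-1} = O(1)$. Write $X_t^* = \beta_t^{-1}\eta_t$ and $\mu_t^* = \E[X_t^* \mid \cF_{t-1}]$, so $\tilde\Delta_t = X_t^* - \mu_t^*$. By Lemma~\ref{cdist}, the conditional distribution of $\eta_t$ is stochastically dominated by $(r-1)Y$, where $Y \sim \Bi(\binom{n-t}{r-1}, p)$ has mean at most $\binom{n}{r-1}p = O(1)$; consequently $X_t^*$ is stochastically dominated by $\beta^{-1}(r-1)Y$. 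Choosing a constant $k$ at least as large as both $\beta^{-1}(r-1)$ and $\binom{n}{r-1}p$ (both $O(1)$ under our hypothesis $p = \Theta(n^{-r+1})$), Lemma~\ref{lbintail} applied to $X_t^*$ yields
\[
 \E[\tilde\Delta_t^2 e^{\theta\tilde\Delta_t} \mid \cF_{t-1}] \le K\nu =: C
\]
for all $\theta \in [-1,1]$, where $C$ depends only on $r$ and on the implicit constant in $p = \Theta(n^{-r+1})$.

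With \eqref{Xbd} verified, Lemma~\ref{lmart} applied with $\ell = t$ gives
\[
 \Pr\Bb{\max_{i\le t}|S_i| \ge y} \le 2\exp\bb{-y^2/(2\max\{y, Ct\})}.
\]
Since $y = O(t)$ there is a constant $C_0$ with $y \le C_0 t$, and hence $\max\{y, Ct\} \le \max\{C_0, C\}\,t$, so the right-hand side is at most $2\exp(-\Omega(y^2/t))$, as required.

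There is no serious obstacle: the single mildly technical point is to phrase the application of Lemma~\ref{lbintail} so as to accommodate the scaling factor $\beta_t^{-1}$, which is handled cleanly by absorbing $\beta^{-1}$ into the domination constant $k$. Everything else is a routine application of the two preliminary lemmas of Section~\ref{sec_prelim}.
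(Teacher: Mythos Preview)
Your proof is correct and follows exactly the approach in the paper: write the martingale differences as $\beta_t^{-1}\eta_t - \E[\beta_t^{-1}\eta_t\mid\cF_{t-1}]$, use the uniform bound $\beta_t\ge\beta>0$ together with Lemma~\ref{cdist} to feed into Lemma~\ref{lbintail}, and then apply Lemma~\ref{lmart}. The paper's proof is a terse one-sentence version of what you wrote.
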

\begin{proof}
Note that
\[
 S_i-S_{i-1}=\beta_i^{-1}\Delta_i = \beta_i^{-1} \eta_i - \E[\beta_i^{-1}\eta_i\mid \cF_{i-1}],
\]
and that $\beta_i\ge\beta >0$. The result thus follows from Lemma~\ref{cdist},
Lemma~\ref{lbintail} (applied to the conditional distribution of $\beta_i^{-1}\eta_i$ given $\cF_{i-1}$),
and Lemma~\ref{lmart}.
\end{proof}

To close this section we quote Lemma 3 from~\cite{BR_hyp}.
This result shows that $\tX_t=x_t+\beta_tS_t$ is a very good approximation to $X_t$.
Recall from Section~\ref{sec_expl} that $C_t$ is the number of components that we have
started to explore by time $t$.

\begin{lemma}\label{lXXt}
Suppose that $r\ge 2$ is fixed and $p=p(n)=\Theta(n^{-r+1})$. Then there
there is a constant $c_1>0$ such that for all $n\ge 1$ we have
\begin{equation}\label{XXt}
 |X_t-\tX_t| \le c_1tC_t/n
\end{equation}
for $0\le t\le n$.\noproof
\end{lemma}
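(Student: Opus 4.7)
The plan is to derive a one-step recursion for $E_t := X_t - \tX_t$, starting from $E_0=0$, and then iterate. The weights $\beta_i^{-1}$ in the martingale $(S_t)$ are chosen precisely so that $\tX_t$ absorbs the linear feedback from $X_{t-1}$ in the conditional drift of $X_t$.

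First I would compute the two one-step increments in matching form. Short algebra from $\beta_t=(1-\alpha_t)\beta_{t-1}$, $x_t=n-t-n\beta_t$, $\tX_t=x_t+\beta_tS_t$ and $S_t-S_{t-1}=\beta_t^{-1}\Delta_t$ gives
\[
 \tX_t-\tX_{t-1} = -1 + \alpha_t\bb{n-t+1-\tX_{t-1}} + \Delta_t.
\]
On the process side, in step $t$ each unseen vertex $u\ne v_t$ becomes active with conditional probability $q_t := 1-(1-p)^{\binom{n-t-1}{r-2}}$, so $\E[\eta_t\mid\cF_{t-1}] = q_t\bigabs{\cU_{t-1}\setminus\{v_t\}}$. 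Using $|\cU_{t-1}|=n-t+1-A_{t-1}$, $\mathbf{1}[v_t\in\cU_{t-1}]=\mathbf{1}[A_{t-1}=0]=C_t-C_{t-1}$ and $A_{t-1}=X_{t-1}+C_{t-1}$ collapses this to
\[
 X_t-X_{t-1} = D_t+\Delta_t,\quad D_t = -1 + q_t\bb{n-t+1-X_{t-1}-C_t}.
\]

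Subtracting and using $\alpha_t(n-t+1-\tX_{t-1}) = \alpha_t(n-t+1-X_{t-1}) + \alpha_tE_{t-1}$ yields
\[
 E_t = (1-\alpha_t)E_{t-1} + \epsilon_t,\quad \epsilon_t := (q_t-\alpha_t)\bb{n-t+1-X_{t-1}} - q_tC_t.
\]
Now $(1-p)^k = 1-kp+O((kp)^2)$ applied with $k=\binom{n-t-1}{r-2}$ and $kp=\alpha_t=O(1/n)$ gives $q_t=\alpha_t+O(1/n^2)$, while $0\le n-t+1-X_{t-1}=\bigabs{\cU_{t-1}\setminus\{v_t\}}+C_t\le n$ together with $q_t=O(1/n)$ gives $|\epsilon_t|=O(C_t/n)$ for $t\ge 1$. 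Iterating from $E_0=0$, using $\prod_{j=i+1}^t(1-\alpha_j)\le 1$ and the monotonicity $C_i\le C_t$ for $i\le t$,
\[
 |E_t| \le \sum_{i=1}^t|\epsilon_i| = O(tC_t/n),
\]
which is the claimed bound.

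The main obstacle is purely organisational: aligning the two one-step formulas so that the subtraction is transparent. The key simplification is the identity $C_{t-1}+\mathbf{1}[A_{t-1}=0]=C_t$, which fuses the \emph{new-component} correction (arising because in the case $A_{t-1}=0$ the chosen $v_t$ must itself be excluded from the pool of unseen vertices) with the $-q_tC_{t-1}$ coming from already-explored vertices into the single term $-q_tC_t$. No concentration or martingale estimate on $(S_t)$ is required; the bound is deterministic once $E_{t-1}$ and $C_t$ are known.
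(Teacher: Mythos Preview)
Your proof is correct. The recursion for $E_t$ is derived cleanly, the bound $q_t=\alpha_t+O(1/n^2)$ follows from $\alpha_t=O(1/n)$ as you state, and the identity $C_{t-1}+\mathbf{1}[A_{t-1}=0]=C_t$ is exactly the right way to absorb the new-component correction. One minor point worth making explicit: the bound $|\epsilon_t|=O(C_t/n)$ uses that $C_t\ge 1$ for $t\ge 1$, so that the $O(1/n)$ contribution from $(q_t-\alpha_t)(n-t+1-X_{t-1})$ is itself $O(C_t/n)$.

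As for comparison with the paper: there is nothing to compare against. The paper does not prove this lemma; it is quoted verbatim as Lemma~3 of the authors' earlier paper~\cite{BR_hyp} (note the \verb|\noproof| marker). Your argument is in fact the natural one and is, up to notational differences, the argument given in~\cite{BR_hyp}: the weights $\beta_i^{-1}$ are introduced there precisely so that $\tX_t$ satisfies the linearised recursion $\tX_t-\tX_{t-1}=-1+\alpha_t(n-t+1-\tX_{t-1})+\Delta_t$, and the deterministic bound on $X_t-\tX_t$ then follows by the same one-step comparison and iteration.
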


\section{Large deviations in the supercritical case}\label{sec_sup}

In this section we shall prove Theorem~\ref{thsupertail}. First, we give a definition
and two lemmas; these will be used in the next section also.
Throughout this section we assume our Standard Assumption~\ref{A1},
that $p=p(n)=\la(n) (r-2)! n^{-r+1}$, where $\la(n)=1+\eps(n)$
with $\eps>0$, $\eps=O(1)$ bounded, and $\eps^3n\to\infty$ as $n\to\infty$.
We explore the random hypergraph $\Hrnp$ as in Section~\ref{sec_expl},
and consider the filtration $(\cF_t)$ and random sequences $(X_t)$, $(A_t)$ and $(C_t)$
associated to this exploration. We shall also consider the deterministic
sequence $(x_t)$, function $g$, and martingale $(S_t)$ defined in Section~\ref{sec_mart}.

\begin{definition}\label{ZTT}
Given a deterministic `cut-off' $t_0=t_0(n)$, let
\begin{eqnarray*}
 Z &=& -\inf\{X_t:t\le t_0\}, \\
 T_0 &=& \inf\{t:X_t=-Z\} \hbox{\quad and} \\
 T_1 &=& \inf\{t:X_t=-Z-1\}.
\end{eqnarray*}
Thus $Z$ is the number of components completely explored by time $t_0$,
$T_0$ is the time at which we finish exploring the last such component, and $T_1$
is the time at which we finish exploring the next component.
Note that $Z+1=C_{t_0+1}$, and that by definition $T_0\le t_0<T_1$.
\end{definition}

We continue following the strategy of~\cite{BR_hyp}, itself based on that of~\cite{BR_walk}, modifying the calculations
to obtain the tighter error bounds claimed in Theorem~\ref{thsupertail}. The next lemma shows that we are unlikely
to see too many components near the start of the process.

\begin{lemma}\label{lZ}
Suppose that our Standard Assumption~\ref{A1} holds.
Let $t_0=t_0(n)$ satisfy $1\le t_0\le \min\{n/(2c_1), c_2\eps n\}$,
where $c_1$ is the constant in Lemma~\ref{lXXt} and $c_2$ is that in Corollary~\ref{cgvals}.
Then for any $y=y(n)$ satisfying $y\to\infty$ and $y=O(t_0)$ we have
\[
 \Pr\bb{ C_{t_0} \ge y } \le 2\exp(-\Omega(y^2/t_0)).
\]
\end{lemma}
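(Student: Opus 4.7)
The plan is to convert the event $\{C_{t_0}\ge y\}$ into a lower bound on $\max_{t\le t_0}|S_t|$ via the martingale approximation $X_t\approx \tX_t=x_t+\beta_tS_t$, and then apply the Hoeffding-type concentration in Lemma~\ref{Sconc}.

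First I would define the stopping time $\sigma=\inf\{t:C_t\ge y\}$, so that $\{C_{t_0}\ge y\}=\{\sigma\le t_0\}$. Since $(C_t)$ increases by at most $1$ per step, $C_\sigma=y$ and $C_{\sigma-1}=y-1$, and the fact that a new component starts at step $\sigma$ forces $A_{\sigma-1}=0$. Hence on this event
\[
X_{\sigma-1}=A_{\sigma-1}-C_{\sigma-1}=-(y-1).
\]
Applying Lemma~\ref{lXXt} at $t=\sigma-1\le t_0\le n/(2c_1)$, with $C_{\sigma-1}=y-1$, gives
\[
|X_{\sigma-1}-\tX_{\sigma-1}|\le c_1(\sigma-1)(y-1)/n\le y/2,
\]
and rewriting via $\tX_t=x_t+\beta_tS_t$ yields
\[
\beta_{\sigma-1}S_{\sigma-1}\le -(y-1)+y/2-x_{\sigma-1}.
\]

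Next I would control $x_{\sigma-1}$ from below. Since $(\sigma-1)/n\le t_0/n\le c_2\eps$, Corollary~\ref{cgvals} (which in particular gives $g\ge 0$ on $[0,c_2\eps]$) together with Lemma~\ref{gprops} yields $x_{\sigma-1}\ge -O(1)$. Combined with the uniform lower bound $\beta_t\ge\beta>0$ noted after \eqref{bsmall}, this forces $|S_{\sigma-1}|=\Omega(y)$ once $y$ is large enough to absorb the $O(1)$ corrections. Consequently, $\{C_{t_0}\ge y\}$ is contained in $\{\max_{t\le t_0}|S_t|\ge cy\}$ for some constant $c>0$, and invoking Lemma~\ref{Sconc} -- whose hypothesis $cy=O(t_0)$ is satisfied since $y=O(t_0)$ -- delivers the required bound $2\exp(-\Omega(y^2/t_0))$.

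No substantial obstacle arises; the only delicacy is ensuring the various $O(1)$ errors (from the $x_t$ approximation and the lower bound on $\beta_t$) are dominated by the linear-in-$y$ main term, which is automatic under the hypothesis $y\to\infty$. The argument closely parallels the subcritical tail proof in Section~\ref{sec_tail}, but now the supercritical sign of $g'(0)=\eps$ makes $x_t$ \emph{positive} in the relevant window, which is precisely what lets a low value of $X$ force $S$ to be large.
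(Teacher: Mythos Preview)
Your argument is correct and follows essentially the same route as the paper's proof: identify a time $t\le t_0$ at which $X_t\approx -y$, use Lemma~\ref{lXXt} together with $x_t\ge -O(1)$ (from $g\ge 0$ on $[0,c_2\eps]$) to force $|S_t|=\Omega(y)$, and then invoke Lemma~\ref{Sconc}. The only cosmetic difference is the choice of witness: the paper works with $T_0=\inf\{t:X_t=-Z\}$ (the time of the overall minimum of $X$ on $[0,t_0]$) and bounds $C_{t_0}\le Z+1$, whereas you use $\sigma-1$, the step just before $C_t$ first reaches $y$; both choices yield a time where $A_t=0$ and $C_t\approx y$, so the remaining algebra is identical.
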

\begin{proof}
Define $Z$ and $T_0$ as in Definition~\ref{ZTT}, and $(S_t)$ as in \eqref{Stdef}.
Let $\cA$ be the event
\[
 \cA =  \bigl\{ |S_t| \le y/4 \hbox{ \ for all \ }0\le t\le t_0 \bigr\}.
\]
By Lemma~\ref{Sconc} we have
$\Pr(\cA^\cc) \le 2\exp(-\Omega(y^2/t_0))$.

Since $t_0\le n/(2c_1)$, Lemma~\ref{lXXt} implies that for $t\le t_0$ we have $|X_t-\tX_t|\le C_t/2$.
Since $X_{T_0}=-Z$, $C_{T_0}=Z$ and $T_0\le t_0$, it follows that $\tX_{T_0}\le -Z/2$.
Since $T_0\le t_0\le c_2\eps n$, from \eqref{gvals} we have $g(T_0/n)\ge 0$.
By \eqref{xg} it follows that $x_{T_0}\ge -O(1)$,
so from \eqref{Stdef} we have $\beta_{T_0}S_{T_0}=\tX_{T_0}-x_{T_0} \le -Z/2+O(1)$.
Hence $S_{T_0}\le -Z/2+O(1)$.
By the definition of $\cA$, it follows that whenever $\cA$ holds, then
\[
 C_{t_0} \le Z+1 \le 2|S_{T_0}|+O(1) \le y/2 +O(1) < y
\]
for $n$ large enough.
\end{proof}

By our Standard Assumption~\ref{A1}, we have $\la=\la(n)=O(1)$ and $\la>1$.
Hence, by \eqref{lasrate2}, there is a constant $c_0$ such that
\begin{equation}\label{lasrate3}
  \las \le 1-c_0\eps.
\end{equation}
There exist a constant $c$ and an integer $n_0$ such that for all $n\ge n_0$ we have
\begin{equation}\label{cen}
 c\eps n\le  \min\{c_0\eps n/(4(r-1)\la),\ c_2\eps n,\ n/(2c_1),\ \rho_{r,\la}n/4 \bigr\},
\end{equation}
where $c_1$ is as in Lemma~\ref{lXXt} and $c_2$ as in Corollary~\ref{cgvals}.
Indeed, $\rho_{r,\la}=\Theta(\eps)$ from \eqref{rsasymp}, and $\la$ and $\eps$ are $O(1)$ by assumption,
so all terms on the right are $\Omega(\eps n)$. From now on, we shall always assume $n\ge n_0$.
In addition to the function $\eps(n)$, we fix a function $\omega(n)$ satisfying
\begin{equation}\label{ocs}
 \omega=\omega(n)\to\infty \hbox{\quad and\quad} \omega \le c\sqrt{\eps^3n}
\end{equation}
with $c$ as in \eqref{cen}.
Any new constants introduced may depend on the choice of $\omega(n)$ as well as that of $r$ and $\eps(n)$.

We shall work with the `initial cut-off' 
\begin{equation}\label{t0def}
  t_0 = \omega\sqrt{n/\eps},
\end{equation}
ignoring the rounding to integers, which causes no complications.
Since $n\ge n_0$, from \eqref{cen} we have
\begin{equation}\label{t0bds}
 t_0 \le  \min\{c_0\eps n/(4(r-1)\la),\ c_2\eps n,\ n/(2c_1),\ \rho_{r,\la}n/4 \bigr\}.
\end{equation}

Recalling \eqref{rsasymp}, set
\begin{equation}\label{t1def}
  t_1=\rho_{r,\la} n = \Theta(\eps n).
\end{equation}
Note for later that, from \eqref{grho}, $g(\rho_{r,\la})=0$, so \eqref{xg} implies that
\begin{equation}\label{xt1}
 x_{t_1} = O(1).
\end{equation}
The convex function $g(\tau)$ is positive on $(0,\rho)$ and passes through zero at $\tau=\rho$.
Hence, roughly speaking, we expect that near $t=t_1=\rho n$ the random trajectory
$(X_t)$ will be close to 0, and that around this point it will reach a new record
low value. We shall show that with high probability
this happens within $t_0$ steps of~$t_1$.

\begin{lemma}\label{lT1}
Suppose that our Standard Assumption~\ref{A1} holds and that $\omega(n)$ satisfies \eqref{ocs}.
Defining $t_0$ and $t_1$ as above and $T_1$ as in Definition~\ref{ZTT}, we have
\[
 \Pr\bb{ t_1-t_0\le T_1 \le t_1+t_0 } = 1-\exp(-\Omega(\omega^2)).
\]
\end{lemma}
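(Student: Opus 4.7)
\emph{Overall strategy.} Via Lemma~\ref{lXXt}, the walk is approximated by $\tX_t=x_t+\beta_tS_t$, and $x_t=ng(t/n)+O(1)$ has $g(\rho_{r,\la})=0$ with slope $g'(\rho_{r,\la})=-\Theta(\eps)$. So $t_1=\rho_{r,\la}n$ is the natural location where the walk crosses zero, and I want to show that the deterministic drift over an interval of length $t_0=\omega\sqrt{n/\eps}$ around $t_1$, which is of order $\eps t_0=\omega\sqrt{\eps n}$, dominates the martingale fluctuations of $S_t$ and the $|X-\tX|$ correction. The plan is to introduce a good event $\cB$ with $\Pr(\cB^c)=\exp(-\Omega(\omega^2))$ on which everything is controlled, then conclude by contradiction.

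\emph{The good event and a uniform bound on $C_t$.} Let $\alpha>0$ be a small constant (depending only on $c_1,c_2,c_3,\rho_{r,\la}$) to be chosen, and set
\[
 \cB = \Bb{\max_{t\le t_1+t_0}|S_t|\le \alpha\omega\sqrt{\eps n}}\cap\Bb{C_{t_0}\le \alpha\omega\sqrt{\eps n}}.
\]
By Lemma~\ref{Sconc} with $t=t_1+t_0=\Theta(\eps n)$ and $y=\alpha\omega\sqrt{\eps n}$, the first event has complement of probability $\exp(-\Omega(\omega^2))$; for the second, I apply Lemma~\ref{lZ} to $t_0$, checking that the assumptions $y=O(t_0)$ and $y^2/t_0=\Omega(\omega^2)$ both follow from $\omega^2\le c^2\eps^3 n$. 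Crucially, since $T_0\le t_0<T_1$ and no new component starts between $T_0+1$ and $T_1$, we have $C_t\le Z+1\le C_{t_0}+2$ for every $t\le T_1$. Combined with Lemma~\ref{lXXt} and $t_1+t_0=O(\eps n)$, this gives, on $\cB$,
\[
 |X_t-\tX_t|\le c_1(t_1+t_0)(C_{t_0}+2)/n = O(\alpha\omega\sqrt{\eps n})+O(1)
 \hbox{ \ for all \ } t\le T_1\wedge(t_1+t_0).
\]

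\emph{Upper bound $T_1\le t_1+t_0$.} Suppose toward contradiction that $T_1>t_1+t_0$; then the above display applies at $t=t_1+t_0$. Since $t_0/n\le c_2\eps$ by \eqref{t0bds}, Corollary~\ref{cgvals} gives $g(\rho_{r,\la}+t_0/n)\le -c_3\eps t_0/n$, hence by \eqref{xg},
\[
 x_{t_1+t_0}\le -c_3\eps t_0+O(1)=-c_3\omega\sqrt{\eps n}+O(1).
\]
Using $\beta_{t_1+t_0}\le 1$ and the bounds on $\cB$,
\[
 X_{t_1+t_0}\le x_{t_1+t_0}+|S_{t_1+t_0}|+|X_{t_1+t_0}-\tX_{t_1+t_0}|\le -(c_3-O(\alpha))\omega\sqrt{\eps n}+O(1).
\]
Since $Z\le C_{t_0}\le \alpha\omega\sqrt{\eps n}$ on $\cB$, choosing $\alpha$ small enough makes the right-hand side strictly less than $-Z-1$ for large $n$, contradicting $T_1>t_1+t_0$.

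\emph{Lower bound $T_1\ge t_1-t_0$ and main obstacle.} For $t\le t_0$, $X_t\ge -Z>-Z-1$ by definition of $Z$. For $t_0\le t\le t_1-t_0$, I apply Corollary~\ref{cgvals} twice (near $0$ and near $\rho_{r,\la}$) to get $g(t_0/n),g(\rho_{r,\la}-t_0/n)\ge c_3\eps t_0/n$; concavity of $g$ (Lemma~\ref{gprops}) then yields $g(t/n)\ge c_3\eps t_0/n$ throughout the interval, so $x_t\ge c_3\omega\sqrt{\eps n}+O(1)$. Stopping at $T_1\wedge(t_1-t_0)$ (so the $C_t$ bound still applies) and arguing as above,
\[
 X_t\ge x_t-|S_t|-|X_t-\tX_t|\ge (c_3-O(\alpha))\omega\sqrt{\eps n}+O(1)>0>-Z-1
\]
on $\cB$ for $\alpha$ small and $n$ large, hence $T_1>t_1-t_0$. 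The main technical obstacle throughout is the circular-looking dependence: the $|X-\tX|$ error is controlled by $C_t$, and $C_t$ is bounded only up to $T_1$, but $T_1$ is exactly what we are trying to estimate. The stopping-time trick (analyzing at $T_1\wedge(t_1\pm t_0)$) sidesteps this cleanly. A secondary delicate point is ensuring the constants fit: the error $|X-\tX|$ inherits a factor $\rho_{r,\la}=O(1)$, which may be large when $\eps$ does not tend to zero, but this is absorbed by taking $\alpha$ small enough relative to $c_3/(c_1\rho_{r,\la})$.
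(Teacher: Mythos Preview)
Your proof is correct and follows essentially the same approach as the paper: define a good event controlling $C_{t_0}$ via Lemma~\ref{lZ} and $\max_t|S_t|$ via Lemma~\ref{Sconc}, then combine the triangle inequality $|X_t-ng(t/n)|\le O(1)+|S_t|+c_1 t C_t/n$ with the concavity estimates of Corollary~\ref{cgvals} to derive a contradiction at $t=T_1$ (lower bound) and $t=t_1+t_0$ (upper bound). The only cosmetic differences are that you package the constants into a single small $\alpha$ rather than the explicit fractions $c_3\eps t_0/4$, $c_3\eps t_0/5$ used in the paper, and you make the stopping-time resolution of the ``circular'' $C_t$--$T_1$ dependence more explicit.
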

\begin{proof}
As above, let $c_1$ and $c_3$ be the constants in Lemma~\ref{lXXt} and Corollary~\ref{cgvals},
and define $Z$, $T_0$ and $T_1$ as in Definition~\ref{ZTT}. 
Let $\cB_1$ be the event
\begin{equation*}
 \cB_1= \bigl\{ C_{t_0} \le c_3\eps t_0/(4\max\{c_1,1\}) \bigr\}.
\end{equation*}
By Lemma~\ref{lZ}, 
\[
 \Pr(\cB_1^\cc) \le 2\exp(-\Omega(\eps^2t_0))=2\exp(-\Omega(\omega\sqrt{\eps^3n})) = \exp(-\Omega(\omega^2)).
\]
Let $\cB_2$ be the event
\[
 \cB_2 =  \bigl\{ |S_t| \le c_3\eps t_0/5 \hbox{ \ for all \ }0\le t\le t_1+t_0 \bigr\}.
\]
Since $t_1=\Theta(\eps n)$ and $c_3\eps t_0/5=O(\eps^2 n)=O(\eps n)$, by Lemma~\ref{Sconc} we have
\[
 \Pr(\cB_2^\cc) \le 2\exp\Bb{-\Omega\bb{ \eps^2t_0^2/ (\eps n) } }
 =\exp(-\Omega(\omega^2)),
\]
since $t_0=\omega\sqrt{n/\eps}$.

To complete the proof of the lemma
we shall establish the deterministic claim that, for $n$ large enough,
\begin{equation}\label{claim}
 \cB_1\cap \cB_2 \implies t_1-t_0\le T_1\le t_1+t_0.
\end{equation}

To see this, suppose that $\cB_1$ and $\cB_2$ hold.
For $t\le \min\{T_1,t_1+t_0\}$ relations \eqref{xg} and \eqref{XXt} and
the definition \eqref{Stdef} give
\begin{eqnarray*} 
 |ng(t/n)-X_t| &\le& |ng(t/n)-x_t|+|\tX_t-x_t| +|X_t-\tX_t| \\
 &\le& O(1) + \beta_t|S_t| +c_1C_{t_0}  \\
 &\le& O(1) + c_3\eps t_0/5 + c_3\eps t_0/4,
\end{eqnarray*}
using $\beta_t\le 1$ and the assumption that $\cB_1\cap\cB_2$ holds in the last step.
Hence
\begin{equation}\label{xX}
 |ng(t/n)-X_t| \le c_3\eps t_0/2
\end{equation}
for $n$ large enough.

Suppose for a contradiction that $T_1<t_1-t_0$.
Then \eqref{xX} applies for $t=T_1\in [t_0,t_1-t_0]$.
From \eqref{g0} the function $g$ is concave. Hence, from \eqref{gvals},
we have $ng(t/n)\ge c_3\eps t_0$ for $t\in [t_0,t_1-t_0]$, so
\[
0> -(Z+1) = X_{T_1} \ge c_3\eps t_0 - c_3\eps t_0/2 >0,
\]
a contradiction. Thus $T_1\ge t_1-t_0$.

Suppose instead that $T_1>t_1+t_0$. Then \eqref{xX} applies with $t=t_1+t_0$.
Hence, by the definition of $T_1=\inf\{t:X_t=-Z-1\}$, the last
bound in \eqref{gvals} and \eqref{xX},
\[
 -Z \le X_{t_1+t_0} \le -c_3\eps t_0+c_3\eps t_0/2 = -c_3\eps t_0/2.
\]
Thus $C_{t_0}\ge Z\ge c_3\eps t_0/2$, contradicting the assumption that $\cB_1$ holds.
This completes the proof of \eqref{claim} and thus of the lemma.
\end{proof}

We are now ready to prove Theorem~\ref{thsupertail}.
\begin{proof}[Proof of Theorem~\ref{thsupertail}]
The conditions of Theorem~\ref{thsupertail} include our Standard Assumption~\ref{A1}, which we thus assume.
The conditions also state that $\omega=\omega(n)$ satisfies
$\omega\to\infty$ and $\omega=O(\sqrt{\eps^3 n})$.
To apply the lemmas above we  need the additional condition \eqref{ocs}, i.e.,
$\omega\le c\sqrt{\eps^3n}$ with $c$ as in \eqref{cen}.
We may impose this without problems since, in proving \eqref{st1}, we may reduce $\omega$ by a constant
factor, changing the implicit constant to compensate.
As in \eqref{t0def} and \eqref{t1def}, we set $t_0=\omega\sqrt{n/\eps}$ and $t_1=\rho n$.
Our first aim is to show that
\[
  \Pr\Bb{ |L_1(\Hrnp)-t_1| > 2t_0 } = \exp(-\Omega(\omega^2)),
\]
which (changing $\omega$ by an irrelevant factor of $3$, say) is exactly \eqref{st1}.

Let $\cC$ be the component that we explore from time $T_0+1$ to time $T_1$.
We have $T_0\le t_0$ by definition,
while from Lemma~\ref{lT1}, with probability $1-\exp(-\Omega(\omega^2))$ we have $|T_1-t_1|\le t_0$.
Thus $\cC$ has between $t_1-2t_0$ and $t_1+t_0$ vertices.
Moreover, since $t_0\le t_1/4$ by \eqref{t0bds}, any component explored before $\cC$ has at most $t_0<|\cC|$ vertices. To complete the proof of \eqref{st1} it remains to show that with very 
high probability no component explored after time $T_1$ has more than $|\cC|$ vertices.

Stopping the exploration at time $T_1$, the unexplored part of $\Hrnp$ has
exactly the distribution of $H^r_{n-T_1,p}$.
We shall apply Theorem~\ref{thsubtail} to this hypergraph; to obtain the result we
need we must show that its `branching factor'
\[
 \La = (n-T_1)^{r-1}p/(r-2)! = \la (1-T_1/n)^{r-1} 
\]
is $1-\Omega(\eps)$. Since $T_1\ge t_1-2t_0=\rho n-2t_0$, we have
\begin{multline*}
 \La  \le \la (1-\rho+2t_0/n)^{r-1} \\
 \le \la (1-\rho)^{r-1} + 2(r-1)\la t_0/n 
 \le \la (1-\rho)^{r-1} + c_0\eps/2,
\end{multline*}
using the first condition in \eqref{t0bds} in the last step.
By \eqref{las} and \eqref{rkldef} we have 
\[
 \la (1-\rho)^{r-1} = \la (1-\rho_{r,\la})^{r-1} = \la (1-\rho_\la) = \las,
\]
so, recalling \eqref{lasrate3},
\[
 \La\le 1-c_0\eps+c_0\eps/2 = 1-\Omega(\eps).
\]
Hence, by Theorem~\ref{thsubtail} (applied with $n-T_1=\Theta(n)$ in place of $n$
and $1-\La=\Omega(\eps)$ in place of $\eps$, and with $L=|\cC|=\Theta(\eps n)$), with probability
$1-\exp(-\Omega(\eps^3n))=1-\exp(-\Omega(\omega^2))$, the hypergraph $H^r_{n-T_1,p}$
has no component with at least as many vertices as $\cC$. It follows that
\begin{equation}\label{big}
 \Pr\bb{ \cC\hbox{ is the unique largest component of }\Hrnp } = 1-\exp(-\Omega(\omega^2)),
\end{equation}
completing the proof of \eqref{st1}.

The bound \eqref{st2} follows easily from \eqref{st1}, Theorem~\ref{thsubtail} and a standard duality argument;
let us outline this briefly. Condition not only on the number $L_1$ of vertices
in the largest component $\cL_1$ of $H=\Hrnp$, but also on the vertex set of this component.
The conditional distribution of $H^-=H-\cL_1$ is then that of $H^r_{n-L_1,p}$ conditioned
on a monotone decreasing event (that there is no component with more than $L_1$ vertices, plus an
extra condition to deal with the possibility of ties; see, e.g.,~\cite[Section 8]{BRsmoothing}).
Taking $\omega=c\sqrt{\eps^3n}$ with $c$ as in \eqref{cen}, so $t_0=c\eps n$,
as above we have $|L_1-t_1|\le 2t_0$ with probability $1-\exp(-\Omega(\omega^2))=1-\exp(-\Omega(\eps^3n))$.
It follows as above that the `branching factor' of $H^r_{n-L_1,p}$ is $1-\Omega(\eps)$ 
(in fact $1-\Theta(\eps)$, but we only need an upper bound).
Since conditioning on a decreasing event can only decrease the probability of having
a component of more than a given size, we may apply Theorem~\ref{thsubtail} to see
that $\Pr(L_1(H^-)\ge L)\le C\eps n L^{-1}\exp(-\eps^2L/C)$. By assumption $L=O(\eps n)$,
so increasing $C$ if necessary we may absorb the additional $\exp(-\Omega(\eps^3n))$
error probability into the expression in \eqref{st2}.
\end{proof}

\section{Bivariate central limit theorem}\label{sec_biv}

\subsection{Martingale CLTs}

In this section we shall prove Theorem~\ref{thglobal2}. For this we need a martingale central
limit theorem. Although the result we need is well known, there are many possible variants,
and it is not so easy to find a form convenient
for combinatorial applications in the literature; the following is (up to a trivial change noted below)
Corollary 1 of Brown and Eagleson~\cite{BrownEagleson}; we thank Svante Janson for supplying
this reference.

\begin{lemma}\label{lm1}
For each $n$, let $(M_{n,t})_{t=0}^{k(n)}$ be a martingale with respect to a filtration
$(\cF_{n,t})$, with $M_{n,0}=0$ for all $n$. Writing $\Delta_{n,t}=M_{n,t}-M_{n,t-1}$, let
\[
 V_n = \sum_{t=1}^{k(n)} \Var[ \Delta_{n,t} \mid \cF_{t-1}]
\]
be the sum of the conditional variances of the increments.
Suppose that
\begin{equation}\label{VC}
 V_n\pto \sigma^2 
\end{equation}
as $n\to\infty$, where $\pto$ denotes convergence in probability. Suppose also
that for any constant $\delta>0$ we have
\begin{equation}\label{LC}
 \sum_{t=1}^{k(n)} \E[ \Delta_{n,t}^2 \ind{|\Delta_{n,t}|\ge \delta} \mid \cF_{t-1}]  \pto 0.
\end{equation}
Then $M_{n,k(n)}\dto N(0,\sigma^2)$.
\end{lemma}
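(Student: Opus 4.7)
The plan is to follow the standard characteristic-function route for martingale CLTs. By L\'evy's continuity theorem it suffices, for each fixed $\theta\in\RR$, to show that $\E[\exp(i\theta M_{n,k(n)})]\to\exp(-\tfrac12\sigma^2\theta^2)$ as $n\to\infty$. Fix $\theta$ throughout, and write $\phi_{n,t}(\theta):=\E[\exp(i\theta\Delta_{n,t})\mid\cF_{n,t-1}]$ and $v_{n,t}:=\Var[\Delta_{n,t}\mid\cF_{n,t-1}]$, so that $V_n=\sum_t v_{n,t}\pto\sigma^2$ by \eqref{VC}.

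The key ingredient is a conditional Taylor expansion. Using $\exp(ix)=1+ix-x^2/2+r(x)$ with $|r(x)|\le\min(x^2,|x|^3/6)$ together with the martingale property $\E[\Delta_{n,t}\mid\cF_{n,t-1}]=0$, one obtains
\[
 \phi_{n,t}(\theta)=1-\tfrac12\theta^2 v_{n,t}+R_{n,t},\quad |R_{n,t}|\le\E[\min(\theta^2\Delta_{n,t}^2,\tfrac16|\theta|^3|\Delta_{n,t}|^3)\mid\cF_{n,t-1}]
\]
almost surely. Splitting the minimum at $|\Delta_{n,t}|=\delta$, the $|\Delta_{n,t}|<\delta$ piece sums over $t$ to at most $\tfrac16\delta|\theta|^3 V_n=\Op(\delta)$ via \eqref{VC}, while the $|\Delta_{n,t}|\ge\delta$ piece sums to $\op(1)$ directly from \eqref{LC}; letting $\delta\downto 0$ slowly forces $\sum_t|R_{n,t}|=\op(1)$. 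A parallel argument, via $v_{n,t}\le\delta^2+\E[\Delta_{n,t}^2\ind{|\Delta_{n,t}|\ge\delta}\mid\cF_{n,t-1}]$, yields $\max_t v_{n,t}=\op(1)$.

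To assemble these ingredients I would use the factorisation $\exp(i\theta M_{n,k(n)})=P_n Q_n$, where $Q_n:=\prod_t\phi_{n,t}(\theta)$ is predictable and $P_n:=\prod_t\exp(i\theta\Delta_{n,t})/\phi_{n,t}(\theta)$ is the terminal value of a mean-$1$ complex-valued martingale. Applying the elementary bound $|\prod z_t-\prod w_t|\le\sum|z_t-w_t|$ (valid for $|z_t|,|w_t|\le 1$) with $z_t=\phi_{n,t}(\theta)$ and $w_t=\exp(-\tfrac12\theta^2 v_{n,t})$, and using both $\sum_t|R_{n,t}|=\op(1)$ and $\sum_t v_{n,t}^2\le\max_t v_{n,t}\cdot V_n=\op(1)$, one gets $Q_n=\exp(-\tfrac12\theta^2 V_n)+\op(1)\pto\exp(-\tfrac12\sigma^2\theta^2)$ by \eqref{VC}.

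The main obstacle, and the step requiring most care, is the final bounded-convergence argument that converts $Q_n\pto e^{-\sigma^2\theta^2/2}$ and $\E P_n=1$ into $\E[P_nQ_n]\to e^{-\sigma^2\theta^2/2}$: $P_n$ is not a priori bounded, since it involves dividing by $\phi_{n,t}(\theta)$. The standard remedies are either to observe that $\max_t v_{n,t}=\op(1)$ together with the Taylor expansion above forces $\min_t|\phi_{n,t}(\theta)|\to 1$ in probability (so $|P_n|\le 2$, say, on an event of probability $1-o(1)$), or to truncate the $\Delta_{n,t}$ at a slowly shrinking level using \eqref{LC} and reduce at once to bounded increments, where every estimate becomes transparent. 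Either route closes the argument.
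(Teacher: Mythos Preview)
Your proposal is a genuine proof via characteristic functions, whereas the paper does not prove this lemma at all: it simply quotes it as Corollary~1 of Brown and Eagleson~\cite{BrownEagleson}, noting only that the restriction there to triangular arrays ($k(n)=n$) loses no generality since one may pad with zeros. So the comparison is between a direct argument and a citation.

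Your argument is the standard one and is essentially correct; one small point deserves tightening. In your first remedy for the final step you write that $\min_t|\phi_{n,t}(\theta)|\to 1$ in probability ``so $|P_n|\le 2$''. That implication as stated is not valid: a product of $k(n)$ factors each at least $1-o(1)$ need not be bounded. The clean observation is rather that $|P_n|=\prod_t|\exp(i\theta\Delta_{n,t})|/|\phi_{n,t}(\theta)|=1/|Q_n|$, and you have already shown $Q_n\pto e^{-\sigma^2\theta^2/2}>0$, whence $|P_n|$ is $\Op(1)$. One still needs the $\phi_{n,t}(\theta)$ to be a.s.\ nonzero for the factorisation and the mean-$1$ martingale property of $P_n$ to make literal sense; the standard fix is to stop at the first time $|\phi_{n,t}(\theta)|$ drops below $1/2$, an event of probability $o(1)$ by the Taylor bound and $\max_t v_{n,t}=\op(1)$. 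Your second remedy (truncation of the increments) sidesteps all of this and is the cleaner route; with it your outline goes through without further difficulty.
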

The only difference between the statement above and Corollary 1 in~\cite{BrownEagleson}
is that there $k(n)=n$, i.e., the array is triangular. As noted in~\cite{BrownEagleson},
this loses no generality, since $n$ plays no role in the result above except as an index.
(Thus we may pad rows with zeros and/or add zero rows to transform a general array into
a triangular one.)
Condition \eqref{LC} is the `Lindeberg' condition, in a conditional form.

Lemma~\ref{lm1} extends without problems to higher dimensions, i.e., to simultaneous
convergence of several martingales; we shall need the following two-dimensional version.
\begin{lemma}\label{lm2}
For each $n\ge 1$ and $j\in \{1,2\}$ let $(M_{j,n,t})_{t=0}^{k(n)}$
be a martingale with respect to a filtration
$(\cF_{n,t})$, with $M_{j,n,0}=0$. Writing $\Delta_{j,n,t}=M_{j,n,t}-M_{j,n,t-1}$,
suppose that the Lindeberg condition~\eqref{LC} holds for $j=1$ and for $j=2$, and that
\begin{equation}\label{VC2}
 V_{j,n} = \sum_{t=1}^{k(n)} \Var[ \Delta_{j,n,t} \mid \cF_{t-1}] \pto \sigma_j^2
\end{equation}
for $j=1,2$ and
\begin{equation}\label{CVC2}
 V_{1,2,n} = \sum_{t=1}^{k(n)} \Covar[ \Delta_{1,n,t},\Delta_{2,n,t} \mid \cF_{t-1}] \pto \sigma_{1,2}.
\end{equation}
Then $(M_{1,n,k(n)},M_{2,n,k(n)})$ converges in distribution to a bivariate normal distribution
$(N_1,N_2)$ with $N_j\sim N(0,\sigma_j^2)$ and $\Covar[N_1,N_2]=\sigma_{1,2}$.
\end{lemma}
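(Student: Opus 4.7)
The plan is to reduce the bivariate convergence to the univariate case (Lemma~\ref{lm1}) via the Cram\'er--Wold device. Fix arbitrary constants $a,b\in\RR$ and consider the linear combination
\[
 M_{n,t} = aM_{1,n,t} + bM_{2,n,t}.
\]
Since $\cF_{n,t}$-measurability and the martingale property are preserved under linear combinations, $(M_{n,t})$ is again a martingale with respect to $(\cF_{n,t})$, starting from $0$, with increments $\Delta_{n,t}=a\Delta_{1,n,t}+b\Delta_{2,n,t}$. It suffices to verify the two hypotheses of Lemma~\ref{lm1} for this combined martingale, with target variance $\sigma^2=a^2\sigma_1^2+2ab\sigma_{1,2}+b^2\sigma_2^2$; if so, Lemma~\ref{lm1} gives $M_{n,k(n)}\dto N(0,\sigma^2)$ for every $(a,b)$, and the Cram\'er--Wold theorem then yields joint convergence of $(M_{1,n,k(n)},M_{2,n,k(n)})$ to the claimed bivariate Gaussian.

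The variance hypothesis is straightforward: bilinearity of conditional covariance gives
\[
 V_n = \sum_{t=1}^{k(n)} \Var[\Delta_{n,t}\mid\cF_{n,t-1}]
 = a^2 V_{1,n} + 2ab V_{1,2,n} + b^2 V_{2,n},
\]
so \eqref{VC2} and \eqref{CVC2} combined with the continuous mapping theorem imply $V_n\pto \sigma^2$.

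The main (though still routine) obstacle is the Lindeberg condition \eqref{LC} for $(\Delta_{n,t})$. Setting $c=\max\{|a|,|b|\}$ (the case $c=0$ is trivial) and $\delta'=\delta/(2c)$, we have $|\Delta_{n,t}|\ge \delta$ only if $\max\{|\Delta_{1,n,t}|,|\Delta_{2,n,t}|\}\ge \delta'$. Combining this with $(a\Delta_1+b\Delta_2)^2\le 2(a^2+b^2)(\Delta_1^2+\Delta_2^2)$ and then using the pointwise bound
\[
 \Delta_{1,n,t}^2\ind{|\Delta_{2,n,t}|\ge \delta'}
 \le \Delta_{1,n,t}^2\ind{|\Delta_{1,n,t}|\ge \delta'} + \Delta_{2,n,t}^2\ind{|\Delta_{2,n,t}|\ge \delta'}
\]
(split according to whether $|\Delta_{1,n,t}|\ge \delta'$ or not; in the latter case $\Delta_{1,n,t}^2\le (\delta')^2\le \Delta_{2,n,t}^2$ on the indicator set), and the symmetric estimate, one obtains a constant $K=K(a,b)$ with
\[
 \Delta_{n,t}^2\ind{|\Delta_{n,t}|\ge \delta}
 \le K\bb{\Delta_{1,n,t}^2\ind{|\Delta_{1,n,t}|\ge \delta'} + \Delta_{2,n,t}^2\ind{|\Delta_{2,n,t}|\ge \delta'}}.
\]
Taking conditional expectations and summing, the right-hand side tends to $0$ in probability by the Lindeberg hypothesis for each of $M_{1,n,\cdot}$ and $M_{2,n,\cdot}$ (applied with $\delta'$ in place of $\delta$), and \eqref{LC} for $(\Delta_{n,t})$ follows. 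This completes the verification of the hypotheses of Lemma~\ref{lm1}, and hence the proof.
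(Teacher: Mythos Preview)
Your proof is correct and follows essentially the same approach as the paper's: reduce to Lemma~\ref{lm1} via the Cram\'er--Wold device, verify the variance condition by bilinearity of conditional covariance, and check the Lindeberg condition for the linear combination by a pointwise inequality. The paper's Lindeberg step is marginally slicker---after reducing to $a=b=1$ it uses the single inequality $(X+Y)^2\ind{|X+Y|\ge 2\delta}\le 4X^2\ind{|X|\ge\delta}+4Y^2\ind{|Y|\ge\delta}$---but your argument via the cross-term bound $\Delta_1^2\ind{|\Delta_2|\ge\delta'}\le \Delta_1^2\ind{|\Delta_1|\ge\delta'}+\Delta_2^2\ind{|\Delta_2|\ge\delta'}$ is equally valid.
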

\begin{proof}
By the Cram\'er--Wold Theorem~\cite{CramerWold} (see e.g., Billingsley~\cite[Theorem 29.4]{Billingsley}),
a sequence of random vectors converges in distribution to a given random vector if and only if
all the one-dimensional projections converge in distribution.
Thus it suffices to show that for any constants $\alpha$ and $\beta$,
$\alpha M_{1,n,k(n)} + \beta M_{2,n,k(n)}$ converges in distribution
to a Gaussian with mean 0 and the appropriate variance, namely $\alpha^2\sigma_1^2+2\alpha\beta \sigma_{1,2}+\beta^2\sigma_2^2$. This follows by applying Lemma~\ref{lm1} to $M_{n,t} = \alpha M_{1,n,t}+\beta M_{2,n,t}$.
Indeed, using the formula $\Var[\alpha X+\beta Y]=\alpha^2 \Var[X]+2\alpha\beta \Covar[X,Y]+\beta^2\Var[Y]$,
which applies just as well to conditional variances, the variance condition \eqref{VC}
follows from the assumptions on $V_{1,n}$, $V_{2,n}$ and $V_{1,2,n}$.
In establishing the Lindeberg condition we may assume without loss of generality that $\alpha=\beta=1$.
It is easy to see that
\[
 (X+Y)^2 \ind{|X+Y|\ge 2\delta} \le 4X^2 \ind{|X|\ge \delta} + 4Y^2\ind{|Y|\ge \delta}.
\]
(Indeed, the first or second term on its own is an upper bound according to whether $|X|\ge |Y|$
or $|X|<|Y|$.)
Hence the Lindeberg condition for $(M_{1,n,t}+M_{2,n,t})$ follows from
the same condition for $(M_{1,n,t})$ and $(M_{2,n,t})$.
\end{proof}

\subsection{Application to $\Hrnp$}

Let $N_t$ denote the nullity of the hypergraph formed by all edges exposed
within the first $t$ steps of the exploration described in Section~\ref{sec_expl}.
Since nullity is additive over components, the component $\cC$ explored
between time $T_0$ and $T_1$ has nullity $N_{T_1}-N_{T_0}$. We now study
the joint distribution of this quantity and $T_1-T_0$.

In this section we assume the following stronger form of our Standard Assumption~\ref{A1}.
\begin{assumption2}\label{A2}
The integer $r\ge 2$ is fixed, $\eps=\eps(n)$ is a function satisfying $\eps>0$, $\eps\to 0$
and $\eps^3 n\to\infty$. Furthermore, $\la=\la(n)=1+\eps$ and $p=p(n)=\la (r-2)!n^{-r+1}$.
\end{assumption2}

As usual, we consider the exploration process defined in Section~\ref{sec_expl}, and
the associated random sequences $(X_t)$, $(A_t)$ and $(\eta_t)$.
Recall that $\cA_t$
denotes the set of active vertices at time $t$, and $A_t=|\cA_t|$.
Let $E_t$ be the set of edges revealed during step $t$. Then,
whether or not we start exploring a new component in step $t$, we have
\[
 N_t-N_{t-1} = (r-1)|E_t| - \left|\bigcup_{e\in E_t} (e\setminus\{v_t\})\setminus \cA_{t-1} \right|.
\]
Indeed, we have added $|E_t|$ edges to the `revealed graph', and the vertices in the union above,
which were previously isolated, have now been connected to $v_t$. (If $\cA_{t-1}\ne\emptyset$,
then the vertices in $\cA_{t-1}$ were already in the same component as $v_t$.)

Let $\xi_t$ be the number of vertices in $\cA_{t-1}\setminus\{v_t\}$ included in one or more
edges in $E_t$, and set
\[
  \zeta_t = \sum_{e,f\in E_t} |(e\cap f)\setminus\{v_t\}|,
\]
where the sum is over all unordered pairs of distinct edges in $E_t$. Then
\begin{equation}\label{xz}
 \xi_t\le N_t-N_{t-1} \le \xi_t + \zeta_t.
\end{equation}
Considering the number of triples $(e,w,f)$ where $e$ and $f$ are edges tested
at step $t$ and $w\in (e\cap f)\setminus\{v_t\}$, by linearity of expectation we have
\begin{equation}\label{eZ}
 \E[\zeta_t] \le \binom{n-t}{r-1}(r-1)\binom{n-t-1}{r-2} p^2 = O(n^{-1}).
\end{equation}
As we shall see later, this implies that we can essentially ignore $\zeta_t$, and consider only the $\xi_t$.

Let $A_t'=|\cA_t\setminus\{v_{t+1}\}|$ be the number of active vertices after $t$ steps
other than $v_{t+1}$.
Thus $A_t'=A_t-1$ if $A_t\ne 0$ and $A_t'=0$ if $A_t=0$. In particular, $A_t'=A_t+O(1)$.
Let $\pi_t=\pi_{1,t}$ be the probability that a given vertex $u$ 
not among $v_1,\ldots,v_t$ is contained in $\bigcup_{e\in E_t}e$.
(This quantity is denoted $\pi_1$ in \cite{BR_hyp}.)
Since there are $c_t=\binom{n-t-1}{r-2}$
edges tested at step $t$ that contain $u$, we have
\begin{multline}\label{pi}
 \pi_t=1-(1-p)^{c_t} = pc_t+O(p^2c_t^2) = pc_t +O(1/n^2) \\
 = \la (1-t/n)^{r-2}/n+O(1/n^2),
\end{multline}
recalling that $p=\la (r-2)!n^{-r+1}$, with $\la=\la(n)=1+\eps$.
In particular, for $t=O(\eps n)$ we have
\begin{equation}\label{pismall}
 \pi_t = (1+O(\eps))/n.
\end{equation}
From the definition of $\xi_{t+1}$ and the linearity of expectation,
\begin{equation}\label{Exi}
 \E[\xi_{t+1}\mid \cF_t] = A_t'\pi_{t+1} = A_t\pi_t+O(1/n).
\end{equation}

Let $\pi_{2,t}$ be the probability that two given (distinct) vertices $u,w\in[n]\setminus\{v_1,\ldots,v_t\}$
are contained in $\bigcup_{e\in E_t}e$. Considering the cases where
$u$, $w$ are in the same $e\in E_t$ and in distinct $e,f\in E_t$ it is easy to see that
\[
 \pi_{2,t}\le p\binom{n-t-2}{r-3} + \pi_{1,t}^2 = O(1/n^2).
\]
It follows that
\begin{equation}\label{Exixi}
 \E[\xi_{t+1}(\xi_{t+1}-1) \mid \cF_t ] = A_t'(A_t'-1)\pi_{2,t+1} = O((A_t/n)^2).
\end{equation}
Similarly,
\begin{equation}\label{Exieta}
 \E[\xi_{t+1}\eta_{t+1} \mid \cF_t ] = A_t'(n-t-1-A_t')\pi_{2,t+1} \le nA_t'\pi_{2,t+1} = O(A_t/n).
\end{equation}

These bounds are enough to extend the argument we used in~\cite{BR_hyp} to prove
a univariate central limit theorem for $L_1(\Hrnp)$,
to prove Theorem~\ref{thglobal2}. Roughly speaking, we shall use
the estimates above to decompose $(N_t)$ into two parts. The first part is a martingale
that is essentially independent of $(X_t)$, and the second depends on $(X_t)$ in a simple
way. Then we can apply Lemma~\ref{lm2} to prove the result.
As usual in this type of argument, we must calculate the expectation terms very accurately,
but it suffices to estimate the variance terms within a factor of $1+o(1)$.

For the rest of the paper we consider $\eps=\eps(n)$ satisfying 
our Strong Assumption~\ref{A2},
and a function $\omega=\omega(n)$ satisfying
\[
 \omega\to\infty\hbox{\quad with\quad}\omega=o((\eps^3n)^{1/6}).
\]
Define $t_1=\rho_{r,\la} n$ as before (in \eqref{t1def}), recalling that $t_1=O(\eps n)$.
As before, set
\[
 t_0=\omega\sqrt{n/\eps}.
\]
In addition, define $(S_t)$ as in \eqref{Stdef}, and $Z$, $T_0$ and $T_1$ as in Definition~\ref{ZTT}.
We shall work with these quantities for the rest of the paper.

As usual, we say that an event $\cE=\cE(n)$ holds \emph{whp} (\emph{with high probability}),
if $\Pr(\cE(n))\to 1$ as $n\to\infty$.

\begin{lemma}\label{props}
Let
\begin{eqnarray*}
 \cE_1 &=& \{ Z \le \omega^{-1}\sqrt{\eps n} \hbox{ \ and \ } T_0 \le \omega^{-1}\sqrt{n/\eps} \}, \\
 \cE_2 &=& \bigl\{ \max_{t\le t_1+t_0} |S_t| \le \omega\sqrt{\eps n} \bigr\}, \hbox{ and} \\
 \cE_3 &=& \{ t_1-t_0\le T_1 \le t_1+t_0 \},
\end{eqnarray*}
and set $\cE=\cE_1\cap\cE_2\cap \cE_3$. Then $\cE$ holds whp.
\end{lemma}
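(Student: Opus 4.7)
The plan is to show each of $\cE_1$, $\cE_2$, $\cE_3$ holds whp separately; a union bound then gives the lemma. Events $\cE_3$ and $\cE_2$ are essentially immediate from previous lemmas. For $\cE_3$, Lemma~\ref{lT1} applies directly, since $\omega = o((\eps^3n)^{1/6})$ certainly satisfies \eqref{ocs} for $n$ large enough, and gives $\Pr(\cE_3^c) = \exp(-\Omega(\omega^2))$. For $\cE_2$, apply Lemma~\ref{Sconc} to the range $t \le t_1 + t_0 = \Theta(\eps n)$ with $y = \omega\sqrt{\eps n}$; one checks $y = O(t_1+t_0)$ (using $\omega = o(\sqrt{\eps n})$, a weak consequence of our hypothesis), and $y^2/(t_1+t_0) = \Theta(\omega^2)$, so $\Pr(\cE_2^c) \le 2\exp(-\Omega(\omega^2))$.

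The work is in $\cE_1$, which has two parts. The $Z$-bound is handled by Lemma~\ref{lZ} with $y_1 = \omega^{-1}\sqrt{\eps n}$: the key ratio is $y_1^2/t_0 = \omega^{-3}(\eps^3n)^{1/2}$, which tends to infinity exactly when $\omega = o((\eps^3n)^{1/6})$, and $y_1 \gg n^{1/3}\to\infty$ as needed. This gives $\Pr(C_{t_0} > \omega^{-1}\sqrt{\eps n}) \to 0$, and since $Z \le C_{t_0}$, the first half of $\cE_1$ follows. For the $T_0$-bound with $t_0'' := \omega^{-1}\sqrt{n/\eps}$, the idea is to show that $X_t > 0$ for all $t \in [t_0'', t_0]$ on a high-probability event; since $X_0 = 0$, this forces the argmin of $X$ on $[0,t_0]$ into $[0,t_0'']$, so $T_0 \le t_0''$. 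To do this, apply Lemma~\ref{Sconc} a second time, now at the finer scale $y_2 = (c_3/3)\omega^{-1}\sqrt{\eps n}$ (with $c_3$ from Corollary~\ref{cgvals}); the same ratio calculation $y_2^2/t_0 = \Omega(\omega^{-3}(\eps^3n)^{1/2})$ gives $\max_{t\le t_0}|S_t| \le y_2$ whp. Intersecting with the $C_{t_0}$-bound, Lemma~\ref{lXXt} yields $|X_t-\tX_t| \le c_1 t_0 C_{t_0}/n = O(1)$ for $t\le t_0$, and \eqref{xg} together with $|\beta_t S_t| \le y_2$ gives $X_t \ge n g(t/n) - O(1) - y_2$. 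For $t \in [t_0'', t_0]$, the containment $[0,t_0] \subseteq [0, c_2\eps n]$ from \eqref{t0bds} allows Corollary~\ref{cgvals} to supply $ng(t/n) \ge c_3 \eps t \ge c_3 \omega^{-1}\sqrt{\eps n}$, so $X_t \ge (2c_3/3)\omega^{-1}\sqrt{\eps n} - O(1) > 0$ for $n$ large enough, since $\omega^{-1}\sqrt{\eps n} \gg n^{1/3}\to\infty$.

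The main obstacle is the $T_0$-bound: the martingale bound from $\cE_2$ is of order $\omega\sqrt{\eps n}$, which swamps the drift $\eps t$ near $t = t_0''$, so one cannot directly read off $T_0 \le t_0''$ from $\cE_2$. The remedy is to rerun Lemma~\ref{Sconc} at the much smaller scale $\omega^{-1}\sqrt{\eps n}$ on the shorter interval $[0,t_0]$; it is exactly the assumption $\omega = o((\eps^3n)^{1/6})$ that simultaneously ensures the required concentration and the component-count bound from Lemma~\ref{lZ}, and in this sense the stated threshold on $\omega$ is tight for the present argument.
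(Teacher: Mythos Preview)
Your proof is correct. For $\cE_2$ and $\cE_3$ you do exactly what the paper does (Lemma~\ref{Sconc} and Lemma~\ref{lT1} respectively, with the same parameter checks). For $\cE_1$ the paper simply cites the companion paper~\cite{BR_hyp} (specifically, the paragraph after equation~(20) there), whereas you give a self-contained argument using only the tools developed in the present paper: Lemma~\ref{lZ} for the $Z$-bound, and a second application of Lemma~\ref{Sconc} at scale $\Theta(\omega^{-1}\sqrt{\eps n})$ combined with Lemma~\ref{lXXt} and Corollary~\ref{cgvals} to force $X_t>0$ on $[t_0'',t_0]$, hence $T_0<t_0''$. Your observation that the hypothesis $\omega=o((\eps^3 n)^{1/6})$ is precisely what makes $y^2/t_0=\Theta(\omega^{-3}\sqrt{\eps^3 n})\to\infty$ is on point, and the bound $c_1 t_0 C_{t_0}/n=O(1)$ from the intersection with the $C_{t_0}$-event is computed correctly. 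So your route for $\cE_1$ is genuinely different in presentation---more self-contained than the paper's citation---while being the same in spirit as the argument in~\cite{BR_hyp}.
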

\begin{proof}
Under our Standard Assumption~\ref{A1}, which of course is implied by our Strong Assumption~\ref{A2}, we proved
in \cite{BR_hyp} that $\cE_1$ holds whp -- see the paragraph after (20) on page 448 of~\cite{BR_hyp}.

For $\cE_2$ apply Lemma~\ref{Sconc}, noting that $t_1+t_0=\Theta(\eps n)$,
and that $\omega\sqrt{\eps n}=O(\eps n)$, since $\omega=o((\eps^3 n)^{1/6}) = O(\sqrt{\eps^3 n})= O(\sqrt{\eps n})$,
with room to spare.

Finally, $\cE_3$ holds whp by Lemma~\ref{lT1}.
\end{proof}

For the rest of the paper
the events $\cE_i$ and $\cE$ are as above.
In our next lemma we establish some consequences of the event $\cE$ holding.
Let
\[ 
 I=[t_1-t_0,t_1+t_0].
\]
\begin{lemma}\label{Asmall}
If $\cE$ holds then

(i) $C_{t_1+t_0}=O(\omega\sqrt{\eps n})$,

(ii) $
 \max_{t\in I} A_t,\ \max_{t\le t_0} A_t =O(\omega\sqrt{\eps n}),
$ and

(iii) $\max_{t\le t_0+t_1} A_t = O(\eps^2 n)$.
\end{lemma}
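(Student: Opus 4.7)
The plan is to combine the martingale approximation $\tX_t=x_t+\beta_tS_t$ with Lemma~\ref{lXXt} (which gives $|X_t-\tX_t|\le c_1tC_t/n$) and exploit the event $\cE$. I use repeatedly the two elementary identities $A_t=X_t+C_t$ (so $A_t\ge 0$, and bounds on $A_t$ follow from bounds on $|X_t|$ and $C_t$) and $C_t\le 1-\min_{s\le t}X_s$, which holds because $X$ decreases by at most $1$ per step (since $\eta_t\ge 0$), so the completion of the $k$-th component forces $X$ to reach $-k$. The needed bounds on $x_t=ng(t/n)+O(1)$ come from the facts $g\ge 0$ on $[0,\rho]$, $g(\rho)=0$, $g'(\rho)=-\Theta(\eps)$, $g'(0)=\eps$ and $|g''|=O(1)$, all provided by Lemma~\ref{gprops}.

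For (i), it suffices to bound $-\min_{s\le t_1+t_0}X_s$ from above. For $s\le t_1$, concavity and $g\ge 0$ on $[0,\rho]$ give $x_s\ge -O(1)$; for $s\in[t_1,t_1+t_0]$, Taylor expansion of $g$ around $\rho$ yields $x_s\ge -O(\eps t_0)-O(t_0^2/n)=-O(\omega\sqrt{\eps n})$, where the hypothesis $\omega=o(\sqrt{\eps^3n})$ is used to absorb the quadratic error. On $\cE_2$, $|\beta_sS_s|\le |S_s|\le\omega\sqrt{\eps n}$, hence $\tX_s\ge -O(\omega\sqrt{\eps n})$ throughout $s\le t_1+t_0$. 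Inserting the crude bound $|X_s-\tX_s|\le c_1(t_0+t_1)C_{t_0+t_1}/n=O(\eps)\,C_{t_0+t_1}$ into $C_{t_0+t_1}\le 1-\min_sX_s$ yields the self-consistent inequality
\[
 C_{t_0+t_1}\le 1+O(\omega\sqrt{\eps n})+O(\eps)\,C_{t_0+t_1},
\]
which, since $\eps\to 0$, rearranges to $C_{t_0+t_1}=O(\omega\sqrt{\eps n})$.

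Parts (ii) and (iii) follow from $A_t=X_t+C_t$ together with the pointwise estimate $|X_t|\le|x_t|+|S_t|+c_1tC_t/n$. In the ranges relevant to (ii), namely $t\le t_0$ or $t\in I$, Taylor expansion (around $0$ using $g(\tau)=\eps\tau+O(\tau^2)$, or around $\rho$ as above) gives $|x_t|=O(\omega\sqrt{\eps n})$; on $\cE_2$, $|S_t|\le\omega\sqrt{\eps n}$; and by (i) the error $c_1tC_t/n=O(\eps)\cdot O(\omega\sqrt{\eps n})=o(\omega\sqrt{\eps n})$. Thus $A_t=O(\omega\sqrt{\eps n})$. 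For (iii), for general $t\le t_0+t_1$ the only bound from concavity is $|x_t|=O(\eps^2 n)$ (the maximum of $g$ on $[0,\rho]$ is $O(\eps^2)$); but both $|S_t|\le\omega\sqrt{\eps n}$ and $c_1tC_t/n$ are $o(\eps^2 n)$ on $\cE$ (the former because $\omega=o(\sqrt{\eps^3n})$, the latter by (i)), yielding $A_t=O(\eps^2 n)$.

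The only mild obstacle is the circularity in (i): the error term $c_1sC_s/n$ in Lemma~\ref{lXXt} contains the very quantity $C_s$ one is trying to control. This is resolved by the self-consistent inequality above, since its coefficient $c_1(t_0+t_1)/n=O(\eps)$ tends to zero.
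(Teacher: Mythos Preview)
Your proof is correct. The route for parts (ii) and (iii) is essentially the same as the paper's: bound $|X_t|\le |x_t|+|S_t|+c_1tC_t/n$ using \eqref{xg}, $\cE_2$ and part (i), then use $A_t=X_t+C_t$.

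For part (i), however, your argument differs from the paper's in an interesting way. The paper proceeds in two stages: it first obtains a \emph{crude} bound $C_{t_1+t_0}\le Z+1+2t_0\le 3\omega\sqrt{n/\eps}$ using $\cE_1$ (to bound $Z$) and $\cE_3$ (to know $T_1\in I$), then feeds this into Lemma~\ref{lXXt} to control $|X_t-\tX_t|$ on $[0,t_0]\cup I$, and finally looks at the last time $T\in I$ with $A_T=0$, using $C_T=-X_T$ to get the sharp bound $C_{t_1+t_0}=O(\omega\sqrt{\eps n})$. Your self-consistent inequality collapses these two stages into one: since $c_1(t_0+t_1)/n=O(\eps)\to 0$, the term involving $C_{t_0+t_1}$ on the right can simply be absorbed into the left. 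A pleasant by-product is that your proof of (i) uses only $\cE_2$, whereas the paper's needs all of $\cE_1$, $\cE_2$ and $\cE_3$. The paper's two-stage approach would, on the other hand, also work under the Standard Assumption $\eps=O(1)$ (where $O(\eps)$ need not be $o(1)$), though that generality is not needed here.
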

\begin{proof}
Suppose $\cE=\cE_1\cap \cE_2\cap \cE_3$ holds.
Since $C_{T_1}=Z+1$ and by assumption $T_1\in I$ we have, very crudely,
that
\[
 C_{t_1+t_0}\le C_{T_1}+(t_1+t_0-T_1) \le Z+1+2t_0 \le 3\omega\sqrt{n/\eps}.
\]
Since $t_1+t_0=O(\eps n)$, it follows from Lemma~\ref{lXXt} that $|X_t-\tX_t| = O(\omega \sqrt{\eps n})$,
uniformly in $t\le t_1+t_0$.
Recalling \eqref{g0} and \eqref{grho},
for $t\le t_0$ or $t\in I$ we have
$g(t/n)=O(\eps t_0/n)$ and hence
$x_t=n g(t/n)+O(1)=O(\eps t_0)=O(\omega\sqrt{\eps n})$.
Since $\cE_2$ holds it follows that
\begin{equation}\label{mX}
 |X_t|\le |x_t|+|S_t|+|X_t-\tX_t|=O(\omega\sqrt{\eps n}),
\end{equation}
uniformly in $t\in [0,t_0]\cup I$. Let $T=\max\{t\in I:A_t=0\}$ be the last time that
we finish exploring a component within the interval $t\in I$;
this makes sense since $T_1\in I$. Then $A_T=0$ so $C_T=-X_T$. Hence
$C_{t_1+t_0}\le C_T+1=O(\omega\sqrt{\eps n})$, proving (i).

For $t\le t_1+t_0$ we have $A_t\le |X_t|+C_t\le |X_t|+C_{t_1+t_0}$.
Hence (ii) follows from (i) and \eqref{mX}.
Recalling from \eqref{rsasymp} that $\rho=\Theta(\eps)$, from \eqref{g0}
it is easy to check that $\sup_{\tau\le\rho}g(\tau)=O(\eps^2)$.
The argument for (iii) is very similar to that for (ii), using this estimate to show
that $x_t=O(\eps^2n)$ for $t\le t_1+t_0$, in place of the tighter
bound $O(\omega\sqrt{\eps n})$ we used in case (ii).
\end{proof}

In the rest of the paper we use the following standard notation for probabilistic asymptotics:
given random variables $(Z_n)$ and a function $f(n)>0$, we write $Z_n=\op(f(n))$ if $Z_n/f(n)$
converges to $0$ in probability as $n\to\infty$. We (briefly) write $Z_n=\Op(1)$ to mean that $Z_n$
is bounded in probability.

\begin{lemma}\label{Napp}
Let $\cC$ be the component explored between times $T_0$ and $T_1$. Then
\[
 n(\cC) =  N_{T_1}-N_{T_0} = \sum_{t=1}^{t_1} \xi_t + \op(\sqrt{\eps^3 n}).
\]
\end{lemma}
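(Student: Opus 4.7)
The plan is to invoke the sandwich \eqref{xz} to write
\[
 \sum_{t=T_0+1}^{T_1}\xi_t \le N_{T_1}-N_{T_0} \le \sum_{t=T_0+1}^{T_1}\xi_t + \sum_{t=T_0+1}^{T_1}\zeta_t,
\]
and then swap the random limits $(T_0,T_1]$ for the deterministic pair $(0,t_1]$. Three correction terms then arise, each to be shown to be $\op(\sqrt{\eps^3 n})$: (a) the $\zeta$-sum on the right; (b) the initial piece $\sum_{t=1}^{T_0}\xi_t$; and (c) the boundary piece of $\xi$'s over the interval between $T_1$ and $t_1$, which has length at most $t_0$. Throughout I work on the event $\cE$ of Lemma~\ref{props}, which holds whp.

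For (a), \eqref{eZ} gives $\E\bigl[\sum_{t=1}^{t_1+t_0}\zeta_t\bigr]=O(t_1/n)=O(\eps)$. Since $\eps^3 n\to\infty$ and $\eps=O(1)$ force $\eps n\to\infty$, we have $\eps/\sqrt{\eps^3 n}=1/\sqrt{\eps n}\to 0$, so Markov together with the bound $T_1\le t_1+t_0$ (holding on $\cE_3$) finishes (a).

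For (b) and (c) I use \eqref{Exi} in combination with Lemma~\ref{Asmall}(ii), which on $\cE$ guarantees $A_t\le K\omega\sqrt{\eps n}$ throughout $[0,t_1+t_0]$ for a suitable constant $K$. To convert these conditional-on-$\cE$ bounds into honest first-moment bounds, I introduce the stopping time $\tau=\inf\{t:A_t>K\omega\sqrt{\eps n}\}$, so that $\tau>t_1+t_0$ on $\cE$. For~(b), set
\[
 R_0 = \sum_{t=1}^{(t_0/\omega^2)\wedge\tau}\xi_t.
\]
The indicator $\ind{t\le (t_0/\omega^2)\wedge\tau}$ is $\cF_{t-1}$-measurable; on $\{t\le\tau\}$ we have $A_{t-1}\le K\omega\sqrt{\eps n}$, and $\pi_t=(1+O(\eps))/n$ by \eqref{pismall}, so \eqref{Exi} yields
\[
 \E[R_0] = O\Bb{\frac{t_0}{\omega^2}\cdot\frac{\omega\sqrt{\eps n}}{n}} = O(1),
\]
hence $R_0=\Op(1)=\op(\sqrt{\eps^3 n})$. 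On $\cE$ we have $T_0\le t_0/\omega^2\le\tau$, so $\sum_{t=1}^{T_0}\xi_t\le R_0$ and (b) is done. An identical argument for (c), applied to the truncated range $[t_1-t_0+1,(t_1+t_0)\wedge\tau]$ (which contains the true boundary interval on $\cE$), gives expected sum $O(2t_0\cdot \omega\sqrt{\eps n}/n)=O(\omega^2)$; the hypothesis $\omega=o((\eps^3 n)^{1/6})$ then yields $\omega^2=o((\eps^3 n)^{1/3})=o(\sqrt{\eps^3 n})$, since $(\eps^3 n)^{1/6}\to\infty$.

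The main obstacle is purely bookkeeping: the random times $T_0,T_1$ must be aligned with deterministic cut-offs before the conditional-expectation identity \eqref{Exi} can be summed cleanly, and uniformly small bounds on $A_t$ are only available on $\cE$. The stopping-time device $\tau$ handles both points at once, promoting bounds valid only on $\cE$ to unconditional first-moment bounds on quantities that dominate the original sums on $\cE$; together with $\Pr(\cE)\to 1$, these give the claimed $\op$-statements.
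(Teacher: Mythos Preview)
Your overall strategy mirrors the paper's proof: handle the $\zeta$-sum by a global first-moment bound, and swap the random range $(T_0,T_1]$ for $(0,t_1]$ by bounding the two boundary pieces using \eqref{Exi} and the control on $A_t$ from Lemma~\ref{Asmall}(ii). Part (a) and part (b) are fine.

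There is, however, a genuine gap in your treatment of (c). You assert that Lemma~\ref{Asmall}(ii) gives $A_t\le K\omega\sqrt{\eps n}$ ``throughout $[0,t_1+t_0]$'' on $\cE$, and hence that your stopping time $\tau=\inf\{t:A_t>K\omega\sqrt{\eps n}\}$ satisfies $\tau>t_1+t_0$ on $\cE$. This is false: Lemma~\ref{Asmall}(ii) controls $A_t$ only on $[0,t_0]\cup I$, not on the middle interval $(t_0,t_1-t_0)$. In the bulk of the giant-component exploration $A_t$ tracks $ng(t/n)$, whose maximum on $[0,\rho]$ is of order $\eps^2 n$, and $\eps^2 n/(\omega\sqrt{\eps n})=\sqrt{\eps^3 n}/\omega\to\infty$ under your hypothesis $\omega=o((\eps^3 n)^{1/6})$. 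So typically $\tau$ is triggered well before $t_1-t_0$, your truncated range $[t_1-t_0+1,(t_1+t_0)\wedge\tau]$ is empty, and the resulting sum does not dominate the boundary piece near $t_1$.

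The fix is easy and does not require any new idea. Either replace $\tau$ for (c) by a stopping time that only starts watching at $t_1-t_0$, i.e.\ $\tau'=\inf\{t\ge t_1-t_0:A_t>K\omega\sqrt{\eps n}\}$, which does satisfy $\tau'>t_1+t_0$ on $\cE$ by Lemma~\ref{Asmall}(ii); or, more simply, dispense with stopping times altogether and use the $\cF_{t-1}$-measurable indicator $\ind{A_{t-1}\le K\omega\sqrt{\eps n}}$ directly: since $\cE$ implies this event for each relevant $t$, the sum $\sum \ind{A_{t-1}\le K\omega\sqrt{\eps n}}\xi_t$ over $[1,t_0]\cup(t_1-t_0,t_1+t_0]$ dominates your $B$ on $\cE$ and has expectation $O(t_0\cdot\omega\sqrt{\eps n}/n)=O(\omega^2)$ by \eqref{Exi}. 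This is essentially what the paper does via $\E[\ind{\cE}B]$.
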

\begin{proof}
That $n(\cC)=N_{T_1}-N_{T_0}$ is immediate from
the additivity of nullity over components.
From \eqref{xz} we have
\[
 \left| N_{T_1}-N_{T_0} -\sum_{t=T_0+1}^{T_1} \xi_t \right| \le \sum_{t=T_0+1}^{T_1} \zeta_t
\le \sum_{t=1}^n \zeta_t= \Op(1)= \op(\sqrt{\eps n}),
\]
where for the second-last step we used the expectation bound \eqref{eZ} and Markov's inequality.

Since $\xi_t\ge 0$ and $T_0\le t_0$ hold by definition, whenever $\cE_3$ holds we have
\[
 \left| \sum_{t=T_0+1}^{T_1} \xi_t - \sum_{t=1}^{t_1} \xi_t \right| \le \sum_{t=1}^{t_0} \xi_t + \sum_{t=t_1-t_0+1}^{t_1+t_0} \xi_t = B,
\]
say. By Lemma~\ref{Asmall}(ii) and \eqref{Exi},
since $\max_t\pi_t=O(1/n)$, we have
\[
 \E[ 1_{\cE}B ] \le 3t_0 O(\omega\sqrt{\eps n})/n = O( \omega^2 ) = o(\sqrt{\eps^3 n}).
\]
Since $\cE$ holds whp, it follows that $B=\op(\sqrt{\eps^3 n})$.
\end{proof}

Following (a modified form of) the strategy in~\cite{norm2}, we now consider
the Doob decomposition of the sequence $(\sum_{i=1}^t\xi_i)$. More precisely,
writing (as before) $\cF_t$ for the $\sigma$-algebra generated by all information revealed up to step $t$
of the exploration process, set
\begin{equation}\label{Dtsdef}
 D_t^* = \E[\xi_t\mid \cF_{t-1}] \hbox{\quad and\quad} \Delta_t^* = \xi_t-D_t^*.
\end{equation}
\begin{lemma}\label{D*}
Define $\rho^*=\rho_{r,\la}^*$ as in \eqref{rhosdef}. Then
\[
 \sum_{t=1}^{t_1} D_t^* = \rho^*n + \sum_{t=1}^{t_1}\gamma_t\Delta_t + \op(\sqrt{\eps^3n}),
\]
where the $\gamma_t$ are deterministic and satisfy
\begin{equation}\label{gt}
 \gamma_t = \frac{t_1-t}{n} +O(\eps^2),
\end{equation}
uniformly in $1\le t\le t_1$.
\end{lemma}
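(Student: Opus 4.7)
The plan is to evaluate $\sum_{t=1}^{t_1} D_t^* = \sum_{t=1}^{t_1}\pi_t A'_{t-1}$ by writing $A'_{t-1} = X_{t-1} + C_{t-1} + O(1)$ and invoking Lemma~\ref{lXXt} to replace $X_{t-1}$ by $x_{t-1} + \beta_{t-1}S_{t-1} + O((t-1)C_{t-1}/n)$. The formula for $\gamma_t$ comes first: from \eqref{pi}, for $s \le t_1 = O(\eps n)$ we have $\pi_s = (1+\eps)(1-s/n)^{r-2}/n + O(n^{-2}) = 1/n + O(\eps/n)$, so setting $\gamma_t = \sum_{s=t+1}^{t_1}\pi_s$ gives \eqref{gt} uniformly in $t\le t_1$.

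After substitution, $\sum_t D_t^*$ splits into a deterministic piece $\sum_t \pi_t x_{t-1}$, a martingale piece $\sum_t \pi_t\beta_{t-1}S_{t-1}$, and an error piece. For the deterministic piece, \eqref{xg} together with a sum-to-integral approximation (valid since $|g|, |g'| = O(1)$ on $[0,\rho]$ by \eqref{g0} and $g(\rho)=0$) gives
\[
 \sum_{t=1}^{t_1}\pi_t x_{t-1} = \la n\int_0^\rho (1-\tau)^{r-2}g(\tau)\,d\tau + o(1).
\]
Substituting $u = 1 - (1-\tau)^{r-1}$, so that $(1-\tau)^{r-2}\,d\tau = du/(r-1)$ and the upper limit becomes $\rho_\la$, and using \eqref{rldef} and \eqref{rkldef} to identify $e^{-\la\rho_\la/(r-1)} = 1-\rho_{r,\la}$, a direct calculation shows that the integral equals $(1-(1-\rho)^r)/r - \rho/\la$; multiplying by $\la$ yields $\rho^*$ by \eqref{rhosdef}.

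For the martingale piece, swap the order of summation and use \eqref{bsmall}, which gives $\beta_j = 1 + O(\eps)$ for all $j \le t_1$:
\[
 \sum_{t=1}^{t_1}\pi_t\beta_{t-1}S_{t-1} = \sum_{i=1}^{t_1-1}\Delta_i\,\beta_i^{-1}\sum_{t=i+1}^{t_1}\pi_t\beta_{t-1} = \sum_{i=1}^{t_1-1}\bb{\gamma_i + \delta_i}\Delta_i,
\]
with $\delta_i$ deterministic and $|\delta_i| = O(\eps\gamma_i)$. The martingale $\sum_i\gamma_i\Delta_i$ has conditional variance $\sum_i\gamma_i^2\Var[\Delta_i\mid\cF_{i-1}] = O(\eps^3 n)$ by Lemma~\ref{cvar} and the bound $\gamma_i = O(\eps)$ summed over $t_1 = O(\eps n)$ indices, hence is $\Op(\sqrt{\eps^3 n})$; and $\sum_i\delta_i\Delta_i$ has variance $O(\eps^2)\cdot O(\eps^3 n) = o(\eps^3 n)$, so is $\op(\sqrt{\eps^3 n})$.

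The main obstacle is the residual error $\sum_t \pi_t[C_{t-1}(1+O(t/n)) + O(1)]$. The $O(1)$ term sums to $O(t_1/n) = o(1)$; the remaining contributions reduce, up to bounded factors, to bounding $\sum_t \pi_t C_{t-1}$ on the whp event $\cE$ of Lemma~\ref{props}. I split at $T_1$: for $t \le T_1$, $\cE_1$ gives $C_{t-1} \le Z+1 = O(\omega^{-1}\sqrt{\eps n})$, contributing $O(t_1 n^{-1}\omega^{-1}\sqrt{\eps n}) = O(\omega^{-1}\sqrt{\eps^3 n}) = o(\sqrt{\eps^3 n})$. For $T_1 < t \le t_1$ the range has length at most $t_0 = \omega\sqrt{n/\eps}$ by $\cE_3$, while Lemma~\ref{Asmall}(i) bounds $C_{t-1} \le C_{t_1+t_0} = O(\omega\sqrt{\eps n})$, contributing $O(t_0 n^{-1}\omega\sqrt{\eps n}) = O(\omega^2) = o(\sqrt{\eps^3 n})$ since $\omega = o((\eps^3 n)^{1/6})$. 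Combining the three pieces yields the claimed identity.
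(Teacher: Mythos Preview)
Your proof is correct and follows essentially the same route as the paper's: both decompose $D_t^*$ via Lemma~\ref{lXXt} into a deterministic piece $\sum\pi_t x_{t-1}$, a martingale piece $\sum\pi_t\beta_{t-1}S_{t-1}$, and a $C_t$-error, then evaluate the integral of $\la(1-\tau)^{r-2}g(\tau)$ over $[0,\rho]$ as $\rho^*$, swap the order of summation in the martingale piece, and bound the error on $\cE$ by splitting the time range. The only cosmetic differences are that the paper takes $\gamma_i=\beta_i^{-1}\sum_{t\ge i}\beta_t\pi_t$ directly (absorbing your $\delta_i$ into the $O(\eps^2)$ of \eqref{gt} rather than into the $\op$ term), computes the integral by exhibiting an antiderivative rather than by your substitution $u=1-(1-\tau)^{r-1}$, and splits the error sum at the deterministic time $t_1-t_0$ rather than at $T_1$.
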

\begin{proof}
From Lemma~\ref{lXXt} and the definitions $X_t=A_t-C_t$ and $\tX_t=x_t+\beta_tS_t$,
we have
\[
 A_t = X_t+C_t = \tX_t+(X_t-\tX_t)+C_t=  \tX_t + O(C_t) = x_t+\beta_t S_t+O(C_t).
\]
Hence, recalling \eqref{Exi},
\[
 D_{t+1}^* = \E[\xi_{t+1}\mid\cF_t] = x_t\pi_t + \beta_t S_t\pi_t +O(C_t/n)+O(1/n),
\]
so
\begin{equation}\label{Dtsum}
 \sum_{t=1}^{t_1} D_t^* = \sum_{t=0}^{t_1-1} x_t\pi_t 
 + \sum_{t=0}^{t_1-1} \beta_t S_t\pi_t + O(E),
\end{equation}
where $E=\sum_{t=0}^{t_1-1}C_t/n$. We shall estimate the terms on the right-hand
side of \eqref{Dtsum} in reverse order.

Whenever the event $\cE=\cE_1\cap\cE_2\cap\cE_3$ defined in Lemma~\ref{props} holds,
for $t\le t_1-t_0$ we have $C_t\le Z+1=O(\omega^{-1}\sqrt{\eps n})$. Also,
by Lemma~\ref{Asmall}(i), for $t_1-t_0<t\le t_1$ we have
$C_t\le C_{t_0+t_1}=O(\omega\sqrt{\eps n})$. Since $\cE$ holds whp, it follows
that whp
\[
 E = O\left((t_1-t_0)\omega^{-1}\sqrt{\eps n}/n + t_0 \omega\sqrt{\eps n}/n\right)
 = O\left(\omega^{-1}\sqrt{\eps^3n} + \omega^2\right) = o\left(\sqrt{\eps^3n}\right).
\]
Thus $E=\op(\sqrt{\eps^3n})$.

Turning to the middle term in the right-hand side of \eqref{Dtsum},
let
\[
  \gamma_i = \sum_{t=i}^{t_1-1} \frac{\beta_t\pi_t}{\beta_i}.
\]
From the definition \eqref{Stdef} of $S_t$, we have
\[
 \sum_{t=0}^{t_1-1} \beta_t S_t\pi_t
  = \sum_{t=1}^{t_1-1}\sum_{i=1}^t \frac{\beta_t\pi_t}{\beta_i}\Delta_i 
 = \sum_{i=1}^{t_1-1}\gamma_i \Delta_i.
\]
Recalling \eqref{bsmall} and \eqref{pismall},
\[
 \gamma_i
 = \sum_{t=i}^{t_1-1} \frac{1+O(\eps)}{n} = \frac{t_1-i}{n}+O(\eps^2).
\]

Finally, turning to the main term in \eqref{Dtsum},
we shall make use of the function $g=g(\tau)$ defined in \eqref{gdef},
and the related function
\[
 h(\tau) = g(\tau) \lambda (1-\tau)^{r-2}.
\]
From the definition \eqref{gdef} of $g$ and relations \eqref{xg} and \eqref{pi} we have
\begin{equation}\label{xp}
 x_t\pi_t = h(t/n)+O(1/n).
\end{equation}
An elementary calculation shows that
\begin{eqnarray*}
 \int_0^\rho h(\tau)\dd\tau &=&  \left[ \exp\left(-\frac{\la}{r-1}(1-(1-\tau)^{r-1})\right) - \frac{\la(1-\tau)^r}{r} \right]_{\tau=0}^\rho \\
 &=& \left[ 1-\tau-g(\tau) - \frac{\la(1-\tau)^r}{r} \right]_{\tau=0}^\rho, \\
\end{eqnarray*}
substituting in the definition \eqref{gdef} of $g$ for the second step. Recalling from \eqref{g0}
and \eqref{grho} that $g(\rho)=0=g(0)$, it follows that
\[
 \int_0^\rho h(\tau)\dd\tau = -\rho +\frac{\la}{r}(1-(1-\rho)^r) = \rho^*,
\]
where $\rho^*=\rho_{r,\la}^*$ is defined in \eqref{rhosdef}.
It is easy to check that $h'$ is uniformly bounded on $[0,1]$; 
it thus follows easily from \eqref{xp} that
\[
 \sum_{t=0}^{t_1-1}x_t\pi_t = n\int_0^\rho h(\tau)\dd\tau + O(t_1/n) = \rho^*n+o(1).
\]
Combining the estimates just proved, Lemma~\ref{D*} follows from \eqref{Dtsum}.
\end{proof}
We note the following simple corollary for later.
\begin{corollary}\label{Dscor}
We have
\[ 
 \sum_{t=1}^{t_1}D_t^* = (1+\op(1))\rho^*n.
\]
\end{corollary}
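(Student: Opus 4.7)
The plan is simply to apply Lemma~\ref{D*} and show that both the martingale-sum term and the $\op(\sqrt{\eps^3n})$ error are negligible compared to $\rho^*n$. The key observation is that by \eqref{rsasymp} we have $\rho^*n = \Theta(\eps^3 n)$, and since $\eps^3 n \to \infty$ by our Standard Assumption~\ref{A1}, we get $\sqrt{\eps^3 n} = o(\eps^3 n) = o(\rho^* n)$, so the explicit error term in Lemma~\ref{D*} is already $\op(\rho^* n)$. It only remains to control the martingale sum $\sum_{t=1}^{t_1}\gamma_t\Delta_t$.

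For this, I would use a second-moment estimate. Since $t_1 = \rho_{r,\la} n = \Theta(\eps n)$, the uniform bound \eqref{gt} yields $|\gamma_t| = O(\eps)$ for all $1\le t\le t_1$. The $\Delta_t$ are martingale differences with respect to $(\cF_t)$, so the cross-terms vanish in expectation and
\[
\E\Bb{\Bb{\sum_{t=1}^{t_1}\gamma_t\Delta_t}^2} = \sum_{t=1}^{t_1}\gamma_t^2\,\E[\Delta_t^2]
= O(\eps^2)\cdot \sum_{t=1}^{t_1} \E[\Var[\Delta_t\mid\cF_{t-1}]].
\]
By Lemma~\ref{cvar}, $\Var[\Delta_t\mid\cF_{t-1}]\le C$ almost surely, so the sum is $O(t_1) = O(\eps n)$, giving a total variance of $O(\eps^3 n)$. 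Chebyshev's inequality then gives
\[
\sum_{t=1}^{t_1}\gamma_t\Delta_t = \Op\bb{\sqrt{\eps^3 n}} = \op(\eps^3 n) = \op(\rho^*n).
\]

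Combining this with Lemma~\ref{D*} yields $\sum_{t=1}^{t_1}D_t^* = \rho^* n + \op(\rho^* n) = (1+\op(1))\rho^* n$, as required. There is no genuine obstacle here; the only mild subtlety is making sure one tracks the scales carefully, noting that $\sqrt{\eps^3 n}$ is the natural fluctuation scale for the martingale term and is automatically negligible against the $\Theta(\eps^3 n)$ deterministic main term because $\eps^3 n \to \infty$.
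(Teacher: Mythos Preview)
Your proof is correct and follows essentially the same approach as the paper: apply Lemma~\ref{D*}, bound the variance of the martingale sum $\sum_{t=1}^{t_1}\gamma_t\Delta_t$ using orthogonality of martingale differences together with $\gamma_t=O(\eps)$ and Lemma~\ref{cvar} to get $O(\eps^3 n)$, and then observe that $\sqrt{\eps^3 n}=o(\rho^* n)$ since $\rho^* n=\Theta(\eps^3 n)$ and $\eps^3 n\to\infty$. The paper's proof is terser but identical in substance.
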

\begin{proof}
Recall that $\Delta_t$ is $\cF_t$-measurable with $\E[\Delta_t\mid\cF_{t-1}]=0$.
Hence,
\[
 \Var\left[ \sum_{t=1}^{t_1}\gamma_t\Delta_t \right] = \sum_{t=1}^{t_1} \gamma_t^2\Var[\Delta_t] 
= O(\eps^3n),
\]
since there are $t_1=O(\eps n)$ terms, each $\gamma_t$ is $O(\eps)$ from \eqref{gt},
and, from Lemma~\ref{cvar}, $\Var[\Delta_t]=O(1)$.
The result thus follows from Lemma~\ref{D*} and the observation that $\sqrt{\eps^3n}=o(\rho^*n)$,
recalling \eqref{rsasymp} and that $\eps^3n\to\infty$.
\end{proof}

After this preparation, we are ready to complete the proof of Theorem~\ref{thglobal2}.
\begin{proof}[Proof of Theorem~\ref{thglobal2}]
Suppose that our Strong Assumption~\ref{A2} holds, and that $\omega(n)$ satisfies \eqref{ocs}.
Define $t_0$ and $t_1$ as in \eqref{t0def} and \eqref{t1def}, and $Z$, $T_0$ and $T_1$
as in Definition~\ref{ZTT}.

Let $\cC$ be the component of $\Hrnp$ explored between times $T_0$ and $T_1$.
By \eqref{big}, whp $\cC$ is the unique component
$\cL_1$ of $\Hrnp$ with the most vertices. We need one final result
from \cite{BR_hyp}, namely Eq. (21) there, which says that
\begin{equation}\label{T1tX}
 T_1 = t_1 + \tX_{t_1}/(1-\las) + \op(\sqrt{n/\eps}).
\end{equation}
(The quantity $\sigma_0$ appearing in~\cite{BR_hyp} is simply $\sqrt{\eps n}$.)
Now $t_1=\rho n$ by definition. From Lemma~\ref{props} (considering $\cE_1$)
we have $T_0\le \omega^{-1}\sqrt{n/\eps}$ whp, and thus $T_0=\op(\sqrt{n/\eps})$.
Hence
\begin{align*}
 |\cC| = T_1-T_0 &= \rho n + \tX_{t_1}/(1-\las) + \op(\sqrt{n/\eps}) \\
 &= \rho n + \beta_{t_1}S_{t_1}/(1-\las) + \op(\sqrt{n/\eps}),
\end{align*}
since $\tX_{t_1}=x_{t_1}+\beta_{t_1}S_{t_1}=\beta_{t_1}S_{t_1}+O(1)$
by \eqref{Stdef} and \eqref{xt1}.
From \eqref{rsasymp}, we have $1-\las\sim \eps$, while from \eqref{bsmall}
we have $\beta_{t_1}\sim 1$. 
Recalling Lemma~\ref{Napp}, to complete the proof of Theorem~\ref{thglobal2}
it thus suffices to show that the pair
\[
 \left( S_{t_1} , \sum_{t=1}^{t_1} \xi_t -\rho^* n\right)
\]
is asymptotically bivariate normal with zero mean, variance $2\eps n$ for the first
coordinate, $\frac{10}{3(r-1)^2}\eps^3n$ for the second, and covariance 
$\frac{2}{r-1}\eps^2n$.

From \eqref{Dtsdef} and Lemma~\ref{D*},
\begin{equation}\label{xitsum}
 \sum_{t=1}^{t_1} \xi_t = \sum_{t=1}^{t_1} D^*_t + \sum_{t=1}^{t_1} \Delta^*_t
 = \rho^*n + \sum_{t=1}^{t_1} (\gamma_t\Delta_t + \Delta^*_t) + \op(\sqrt{\eps^3n}),
\end{equation}
where the $\gamma_t$ are deterministic and satisfy \eqref{gt}.
Set
\begin{equation}\label{hDt}
 \hat\Delta_t = \gamma_t\Delta_t+\Delta_t^* \hbox{\quad and\quad} \hat S_i=\sum_{t=1}^i\hat\Delta_t.
\end{equation}
Then \eqref{xitsum} implies that
\[
 \hat S_{t_1}
 = \sum_{t=1}^{t_1} \xi_t -\rho^* n +\op(\sqrt{\eps^3n}).
\]
Thus to prove Theorem~\ref{thglobal2} it suffices to show that $(S_{t_1},\hat S_{t_1})$ is asymptotically bivariate
normal with mean zero and variance as above. More precisely, it suffices to show that
\begin{equation}\label{concrete}
 \bb{ (\eps n)^{-1/2} S_{t_1}, (\eps^3 n)^{-1/2} \hat S_{t_1} } \dto (X,Y)
\end{equation}
where $(X,Y)$ is bivariate normal with
\begin{equation}\label{Vars}
 \Var[X]= \sigma_1^2 = 2,\quad \Var[Y]=\sigma_2^2 = \frac{10}{3(r-1)^2} \hbox{\quad and\quad} 
 \Covar[X,Y] = \sigma_{1,2} = \frac{2}{r-1}.
\end{equation}
For this we shall use Lemma~\ref{lm2}.

First, by the definitions \eqref{Dtdef} and \eqref{Dtsdef}, $\E[\Delta_t\mid \cF_{t-1}]=
\E[\Delta_t^*\mid \cF_{t-1}]=0$, so $\E[\hat\Delta_t\mid \cF_{t-1}]=0$,
and $(S_t,\hat S_t)_{t=0}^{t_1}$ is a martingale. The remaining assumptions of Lemma~\ref{lm2}
are captured in the following claim.
\begin{claim}\label{cc}
As $n\to\infty$ we have
\begin{eqnarray}
 \sum_{t=1}^{t_1} \Var[\beta_t^{-1}\Delta_t\mid \cF_{t-1}] &=& (2+\op(1))\eps n, \label{V1} \\
 \sum_{t=1}^{t_1} \Var[\hat\Delta_t\mid \cF_{t-1}] &=& (1+\op(1))\frac{10}{3(r-1)^2}\eps^3n,\label{V2} \\
 \sum_{t=1}^{t_1} \Covar[\hat\Delta_t,\beta_t^{-1}\Delta_t\mid\cF_{t-1}] 
      &=& (1+\op(1))\frac{2}{(r-1)}\eps^2n.\label{V3}
\end{eqnarray}
Moreover, indicating the dependence on $n$ explicitly for a change, the rescaled martingales
\begin{equation}\label{Mdef}
 M_{1,n,t} = (\eps(n) n)^{-1/2} S_{n,t} \hbox{\quad and\quad} 
 M_{2,n,t} = (\eps(n)^3 n)^{-1/2} \hat S_{n,t},
\end{equation}
defined for $0\le t\le t_1(n)$, satisfy the Lindeberg condition \eqref{LC}.
\end{claim}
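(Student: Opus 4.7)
The plan is to verify the three conditional covariance asymptotics (V1)--(V3) by working on the high-probability event $\cE$ from Lemma~\ref{props}, and then to dispose of the Lindeberg condition using the sub-exponential moment bound from Lemma~\ref{lbintail}. For (V1), Lemma~\ref{Asmall}(iii) gives $A_{t-1}=O(\eps^2 n)=o(n)$ uniformly for $t\le t_1$ on $\cE$, so Lemma~\ref{cvar} yields $\Var[\Delta_t\mid\cF_{t-1}]\sim\la(r-1)\sim r-1$. Combined with $\beta_t\to 1$ from \eqref{bsmall} and $t_1=\rho_{r,\la}n\sim 2\eps n/(r-1)$ from \eqref{rsasymp}, summation gives $(r-1)\rho_{r,\la}n\sim 2\eps n$.

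For (V2), expand $\Var[\hat\Delta_t\mid\cF_{t-1}]=\gamma_t^2\Var[\Delta_t\mid\cF_{t-1}]+2\gamma_t\Covar[\Delta_t,\Delta_t^*\mid\cF_{t-1}]+\Var[\Delta_t^*\mid\cF_{t-1}]$. Using \eqref{gt}, the first piece sums to $(r-1)\sum_{t\le t_1}(t_1-t)^2/n^2\sim (r-1)\rho^3 n/3\sim 8\eps^3 n/(3(r-1)^2)$. For the third piece, direct inclusion--exclusion on the indicators defining $\xi_t$, combined with \eqref{pi} and $\pi_{2,t}=O(1/n^2)$, yields $\Var[\xi_t\mid\cF_{t-1}]=A_{t-1}'\pi_t(1-\pi_t)+(A_{t-1}')(A_{t-1}'-1)(\pi_{2,t}-\pi_t^2)=A_{t-1}/n+O(A_{t-1}^2/n^2+1/n)$; the error sums to $o(\eps^3 n)$ on $\cE$. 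Writing $A_{t-1}=x_{t-1}+\beta_{t-1}S_{t-1}+O(C_{t-1})$ via Lemma~\ref{lXXt} and controlling the stochastic corrections with the bounds in Lemma~\ref{props}, one obtains $\sum A_{t-1}/n=(1+\op(1))n\int_0^\rho g(\tau)\dd\tau$. A Taylor expansion using \eqref{g0} and \eqref{grho} gives $g(\tau)=\eps\tau-\tfrac{r-1}{2}\tau^2+O(\eps^3+\tau^3)$, and together with $\rho\sim 2\eps/(r-1)$ the integral evaluates to $2\eps^3/(3(r-1)^2)$. Finally the cross term is negligible: by \eqref{Exieta} and \eqref{Exi}, $|\Covar[\Delta_t,\Delta_t^*\mid\cF_{t-1}]|=O(A_{t-1}/n)$, so $\sum\gamma_t|\Covar|=O(\eps\cdot\eps^3 n)=o(\eps^3 n)$. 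Adding the two main contributions gives $10\eps^3 n/(3(r-1)^2)$.

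For (V3), one has $\Covar[\hat\Delta_t,\beta_t^{-1}\Delta_t\mid\cF_{t-1}]=\beta_t^{-1}(\gamma_t\Var[\Delta_t\mid\cF_{t-1}]+\Covar[\Delta_t,\Delta_t^*\mid\cF_{t-1}])$. The leading piece sums to $(r-1)\sum_{t\le t_1}(t_1-t)/n\sim (r-1)\rho^2 n/2\sim 2\eps^2 n/(r-1)$, while the covariance correction is again $O(\eps^3 n)=o(\eps^2 n)$. The Lindeberg condition follows from sub-exponential tails: Lemmas~\ref{cdist} and~\ref{lbintail} give $\E[\Delta_t^2 e^{|\Delta_t|}\mid\cF_{t-1}]=O(1)$, and the same bound applies to $\hat\Delta_t$ since $|\hat\Delta_t|\le|\gamma_t||\Delta_t|+\xi_t+\E[\xi_t\mid\cF_{t-1}]$ with $\xi_t\le(r-1)|E_t|$ itself dominated by a binomial with bounded mean. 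Hence $\E[\Delta_t^2\ind{|\Delta_t|\ge M}\mid\cF_{t-1}]=O(e^{-M})$, and the two Lindeberg sums are bounded by $O(t_1(\eps n)^{-1}e^{-\delta\sqrt{\eps n}})$ and $O(t_1(\eps^3 n)^{-1}e^{-\delta\sqrt{\eps^3 n}})$ respectively, both of which vanish since $\eps^3 n\to\infty$.

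The main obstacle is the evaluation of $\sum_{t=1}^{t_1}\Var[\Delta_t^*\mid\cF_{t-1}]$: one needs not merely the boundedness $A_{t-1}=O(\eps^2 n)$ provided by Lemma~\ref{Asmall}, but that on average $A_{t-1}$ tracks $n g(t/n)$ closely enough that the cumulative martingale correction $\sum\beta_{t-1}S_{t-1}/n$ and component-count correction $\sum C_{t-1}/n$ are of order $o(\eps^3 n)$. This is where the specific bounds in the definition of $\cE$ (in particular $|S_t|\le\omega\sqrt{\eps n}$ and $C_{t_0+t_1}=O(\omega\sqrt{\eps n})$, together with $\omega=o((\eps^3 n)^{1/6})$) are crucial, and where the argument is most delicate.
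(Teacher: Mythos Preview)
Your treatment of \eqref{V1}--\eqref{V3} is correct and matches the paper's approach almost exactly. The only difference is in how you evaluate $\sum_{t\le t_1}\Var[\Delta_t^*\mid\cF_{t-1}]$: you re-derive $\sum_{t\le t_1} A_{t-1}/n \sim n\int_0^\rho g(\tau)\,\dd\tau$ from scratch via a Taylor expansion of $g$, whereas the paper simply observes that this sum equals $\sum_{t\le t_1} D_t^* + o(\eps^3 n)$ and invokes the already-proved Corollary~\ref{Dscor}. Both routes give the same answer (indeed $n\int_0^\rho g\sim\rho^* n$), and your control of the martingale and component-count corrections is adequate; in fact only $\omega=o(\sqrt{\eps^3 n})$ is needed there, so your last paragraph overstates the delicacy of this step.

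There is, however, a genuine gap in your Lindeberg argument for the second martingale. Your bound $\E[\hat\Delta_t^2\ind{|\hat\Delta_t|\ge\delta\sqrt{\eps^3 n}}\mid\cF_{t-1}]=O(e^{-\delta\sqrt{\eps^3 n}})$ is correct, but since $t_1=\Theta(\eps n)$, the rescaled Lindeberg sum is then $\Theta(\eps^{-2})\,e^{-\delta\sqrt{\eps^3 n}}$, and this need \emph{not} tend to zero for every $\delta>0$. For example, if $\eps^3 n=c(\log n)^2$ (permitted by the Strong Assumption), then $\eps^{-2}=\Theta(n^{2/3}(\log n)^{-4/3})$ while $e^{-\delta\sqrt{\eps^3 n}}=n^{-\delta\sqrt{c}}$, and the product diverges whenever $\delta<2/(3\sqrt{c})$. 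The sub-exponential bound from Lemma~\ref{lbintail}, applied with the dominating binomial $(r-1)|E_t|$ of mean $\Theta(1)$, is simply too crude. The paper closes this gap (in the Appendix) by exploiting that $\xi_t\ge 0$ and $D_t^*=\E[\xi_t\mid\cF_{t-1}]=O(1)$: for large $n$ one has $\ind{|\Delta_t^*|\ge\delta\sqrt{\eps^3 n}}=\ind{\Delta_t^*\ge\delta\sqrt{\eps^3 n}}\le\ind{\xi_t\ge 2}$, whence $\E[(\Delta_t^*)^2\ind{|\Delta_t^*|\ge\delta\sqrt{\eps^3 n}}\mid\cF_{t-1}]\le 2\,\E[\xi_t(\xi_t-1)\mid\cF_{t-1}]=O(\eps^4)$ on~$\cE$ by~\eqref{Exixi} and Lemma~\ref{Asmall}(iii). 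Summing over $t\le t_1$ and rescaling gives $O(\eps^2)\to 0$. Combining this with your (correct) argument for $((\eps n)^{-1/2}\Delta_t)$ and the fact that the Lindeberg condition is preserved under addition and bounded scaling then yields the condition for $((\eps^3 n)^{-1/2}\hat\Delta_t)$.
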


Assuming the claim for the moment then, rescaling as in \eqref{Mdef}, the bounds \eqref{V1}--\eqref{V3}
give exactly the variance conditions \eqref{VC2} and \eqref{CVC2} of Lemma~\ref{lm2},
with $\sigma_1^2$, $\sigma_2^2$ and $\sigma_{1,2}$ as in \eqref{Vars}.
Thus Lemma~\ref{lm2} implies \eqref{concrete} which,
as noted above, implies Theorem~\ref{thglobal2}.
It remains only to prove the claim. The Lindeberg condition asserts, roughly speaking, that it
is unlikely that any single step in either martingale contributes significantly to the
total variance of the martingale over $t_1$ steps. As in almost all combinatorial settings,
this condition holds with plenty of room to spare. Indeed, the (unrescaled) martingales have step
sizes of order $1$, with strong tail bounds (inherited from the binomial distribution),
and their final variances are much larger than $1$, so the Lindeberg condition holds with plenty
of room to spare. We give a full proof in the Appendix.

It remains to establish \eqref{V1}--\eqref{V3}. This concerns only steps $1,\ldots,t_1$
of our random exploration process, so from now on we only consider $0\le t\le t_1$.
Since we are aiming for convergence in probability, and the event $\cE$ defined in Lemma~\ref{props}
holds whp, much of the time we assume that $\cE$ holds.

By Lemma~\ref{Asmall}(iii), when $\cE$ holds we have
\begin{equation}\label{Amax}
 \max_{t\le t_1}A_t=O(\eps^2 n).
\end{equation}
Since $t\le t_1$, the bound \eqref{bsmall} implies that $\beta_t\sim 1$. Thus,
when $\cE$ holds,
\begin{equation}\label{VDt}
 \Var[\beta_t^{-1}\Delta_t\mid \cF_{t-1}] \sim \Var[\Delta_t\mid \cF_{t-1}] \sim r-1,
\end{equation}
where the final estimate follows from Lemma~\ref{cvar}
and the bound \eqref{Amax} above, recalling that $\eps=o(1)$.
(It also follows from \cite[Eq. (7)]{BR_hyp}, for example.)
Hence, on $\cE$,
\[
 \sum_{t=1}^{t_1} \Var[\beta_t^{-1}\Delta_t\mid \cF_{t-1}] \sim (r-1)t_1 \sim 2\eps n.
\]
Since $\cE$ holds whp, this implies \eqref{V1}.
Next, recalling that $D_t^*=\E[\xi_t\mid \cF_{t-1}]$, we have
\begin{eqnarray*}
 \Var[\Delta_t^*\mid \cF_{t-1}] &=& \Var[\xi_t-D_t^*\mid \cF_{t-1}] \\
&=& \Var[\xi_t\mid \cF_{t-1}] \\
 &=& \E[\xi_t\mid \cF_{t-1}] + \E[\xi_t(\xi_t-1)\mid \cF_{t-1}]  - \E[\xi_t\mid\cF_{t-1}]^2\\
 &=& \E[\xi_t\mid \cF_{t-1}] + O(A_{t-1}^2/n^2+1/n^2),
\end{eqnarray*}
by \eqref{pismall}--\eqref{Exixi}. 
Hence, by \eqref{Amax}, when $\cE$ holds we have
\[
 \Var[\Delta_t^*\mid \cF_{t-1} ] = D_t^* + O(\eps^4).
\]
Now, on $\cE$,
\begin{eqnarray}
 \Covar[\Delta_t^*,\beta_t^{-1}\Delta_t\mid \cF_{t-1}] 
&\sim& \Covar[\Delta_t^*,\Delta_t\mid \cF_{t-1}] \nonumber\\
 &=&  \Covar[\xi_t,\eta_t\mid \cF_{t-1}] \nonumber\\
 &=& \E[\xi_t\eta_t\mid\cF_{t-1}] -\E[\xi_t\mid \cF_{t-1}]\E[\eta_t\mid \cF_{t-1}]\nonumber\\
 &=& O(A_t/n+1/n) = O(\eps^2) = o(\eps),\label{Cov}
\end{eqnarray}
using \eqref{Exieta}, \eqref{Exi}, the bound $\E[\eta_t\mid \cF_{t-1}]=O(1)$ (which
follows from Lemma~\ref{cdist}), and \eqref{Amax}.
Hence
\begin{eqnarray*}
 \Var[\hat\Delta_t\mid \cF_{t-1} ] &=& \Var[\Delta_t^*\mid \cF_{t-1}] + \gamma_t^2\Var[\Delta_t\mid \cF_{t-1}] + \gamma_t\Covar[\Delta_t^*,\Delta_t\mid \cF_{t-1}] \\
 &=& D_t^* + (r-1)(t_1-t)^2/n^2 + o(\eps^2),
\end{eqnarray*}
recalling \eqref{VDt} and \eqref{gt}.
Thus,
\begin{eqnarray}
 \sum_{t=1}^{t_1} \Var[\hat\Delta_t\mid \cF_{t-1}] &=& \sum_{t=1}^{t_1} D_t^* + (r-1)\frac{t_1^3}{3n^2} +o(\eps^3n) \nonumber
\\
 &=& \rho^*n+\frac{8\eps^3}{3(r-1)^2}n + \op(\eps^3n) \nonumber \\
&=& (1+\op(1))\frac{10}{3(r-1)^2}\eps^3n,\nonumber
\end{eqnarray}
by Corollary~\ref{Dscor} and \eqref{rsasymp}. This proves \eqref{V2}.
Finally, since $\beta_t\sim 1$ and $\hat\Delta_t= \gamma_t\Delta_t+\Delta_t^*$, when $\cE$ holds we have
\begin{eqnarray*}
 \Covar[\hat\Delta_t,\beta_t^{-1}\Delta_t\mid\cF_{t-1}] &\sim&
 \Covar[\hat\Delta_t,\Delta_t\mid\cF_{t-1}] \\
 &=& \gamma_t\Var[\Delta_t\mid\cF_{t-1}] + \Covar[\Delta_t^*,\Delta_t\mid \cF_{t-1}]  \\
&=& \frac{t_1-t}{n}(r-1) + o(\eps),
\end{eqnarray*}
from \eqref{gt}, \eqref{VDt} and \eqref{Cov}.
Hence
\begin{equation*}
 \sum_{t=1}^{t_1}  \Covar[\hat\Delta_t,\beta_t^{-1}\Delta_t\mid\cF_{t-1}]
  = (r-1)\frac{t_1^2}{2n}+\op(\eps^2n) =(1+\op(1))\frac{2}{(r-1)}\eps^2n,
\end{equation*} 
establishing \eqref{V3}.
This completes the proof of Claim~\ref{cc} and hence of Theorem~\ref{thglobal2}.
\end{proof}

As we have already remarked, in a follow-up paper~\cite{BRsmoothing}
we prove a local limit version of Theorem~\ref{thglobal2}, using
Theorems~\ref{thglobal2} and~\ref{thsupertail} as tools in the proof. 
This local limit theorem is then used to prove an asymptotic formula for the number of
connected $r$-uniform hypergraphs with a given number of vertices and edges,
in the case where the nullity is small compared to the number of edges.

\appendix
\section{Appendix}

In this appendix we prove the second part of Claim~\ref{cc}, concerning the Lindeberg condition.
We indicate the dependence on $n$ explicitly much of the time,
writing $S_{n,t}$ for $S_t$, and so on. As in the claim, for $n\ge n_0$ and $0\le t\le t_1$ let
\[
 M_{1,n,t} = (\eps n)^{-1/2} S_{n,t} \hbox{\quad and\quad} 
 M_{2,n,t} = (\eps ^3 n)^{-1/2} \hat S_{n,t}.
\]
We must show that these martingales satisfy the Lindeberg condition \eqref{LC}.

\begin{claim}\label{LC2}
The martingale difference sequences
\[
 ( (\eps n)^{-1/2} \Delta_{n,t} )\hbox{\quad and\quad} ((\eps^3 n)^{-1/2} \Delta^*_{n,t} )
\]
satisfy the Lindeberg condition.
\end{claim}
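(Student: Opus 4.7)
The plan is to reduce both Lindeberg conditions to a uniform conditional exponential moment bound on the increments, and then use crude exponential Markov to show that the truncated second moments are of negligible order. The key resources are Lemma~\ref{lbintail} and Lemma~\ref{cdist} (for $\Delta_t$) and the analogous domination $\xi_t\le (r-1)|E_t|$ with $|E_t|\sim \Bi(\binom{n-t}{r-1},p)$ (for $\Delta_t^*$).

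First I would establish that for every $t$, almost surely,
\[
 \E[\Delta_t^2 e^{|\Delta_t|}\mid \cF_{t-1}] = O(1) \hbox{\quad and\quad} \E[(\Delta_t^*)^2 e^{|\Delta_t^*|}\mid \cF_{t-1}] = O(1).
\]
For the first, note that $\eta_t$ is dominated (conditionally) by $(r-1)\Bi(N,p)$ with $Np=O(1)$, so Lemma~\ref{lbintail} applied with $\theta=+1$ and $\theta=-1$, together with $e^{|x|}\le e^x+e^{-x}$, gives the bound. For the second, I would apply Lemma~\ref{lbintail} to $X=\xi_t$, $\mu=D_t^*=\E[\xi_t\mid\cF_{t-1}]$, since $\xi_t\le (r-1)|E_t|$ with $|E_t|$ a binomial of bounded mean; the same trick with $\theta=\pm 1$ yields the claim.

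Next, by Markov's inequality applied to $e^{|\Delta|}\ge e^L$ whenever $|\Delta|\ge L$, the tail estimate
\[
 \E[\Delta_t^2\ind{|\Delta_t|\ge L}\mid \cF_{t-1}] \le e^{-L}\E[\Delta_t^2 e^{|\Delta_t|}\mid \cF_{t-1}] = O(e^{-L})
\]
holds, and likewise for $\Delta_t^*$. Summing over $1\le t\le t_1=O(\eps n)$ and dividing by the appropriate scaling gives, for any fixed $\delta>0$,
\[
 \frac{1}{\eps n}\sum_{t=1}^{t_1}\E[\Delta_t^2\ind{|\Delta_t|\ge \delta\sqrt{\eps n}}\mid\cF_{t-1}]
 = O\bigl(e^{-\delta\sqrt{\eps n}}\bigr),
\]
and
\[
 \frac{1}{\eps^3 n}\sum_{t=1}^{t_1}\E[(\Delta_t^*)^2\ind{|\Delta_t^*|\ge \delta\sqrt{\eps^3 n}}\mid\cF_{t-1}]
 = O\bigl(\eps^{-2}e^{-\delta\sqrt{\eps^3 n}}\bigr),
\]
both of which tend to $0$ deterministically since $\eps n\to\infty$ and $\eps^3n\to\infty$, and exponential decay trounces any polynomial factor in $\eps^{-1}$.

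There is essentially no obstacle here: the martingale increments are bounded in distribution by binomials with $O(1)$ mean, so they have uniform exponential tails, while the scaling thresholds $\sqrt{\eps n}$ and $\sqrt{\eps^3 n}$ blow up to infinity. The only thing to be careful about is to phrase the bound in conditional form (which Lemma~\ref{lbintail} already provides). Once Claim~\ref{LC2} is proved, the Lindeberg conditions for $M_{1,n,\cdot}$ and $M_{2,n,\cdot}$ follow immediately, since the actual increments $(\eps n)^{-1/2}\beta_t^{-1}\Delta_t$ and $(\eps^3 n)^{-1/2}(\gamma_t\Delta_t+\Delta_t^*)$ differ from the stated ones by the bounded factor $\beta_t^{-1}=\Theta(1)$, respectively by incorporating an $O(\eps)$ multiple of $\Delta_t$ whose contribution is absorbed by the first Lindeberg bound (after using $(a+b)^2\le 2a^2+2b^2$ and $\gamma_t^2/(\eps^3 n)\le O(\eps^2)/(\eps^3 n)=O(1/(\eps n))$).
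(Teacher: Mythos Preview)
Your proof is correct, but it takes a different and rather cleaner route than the paper's. You use Lemma~\ref{lbintail} uniformly for both $\Delta_t$ and $\Delta_t^*$ to get the conditional exponential-moment bound $\E[\Delta^2 e^{|\Delta|}\mid\cF_{t-1}]=O(1)$, and then a single exponential Markov step handles both Lindeberg sums deterministically. The paper instead treats the two sequences separately: for $\Delta_t$ it argues that $|\Delta_t|\ge\delta\sqrt{\eps n}$ forces $\eta_t\ge n^{1/3}$, which a Chernoff bound makes negligible; for $\Delta_t^*$ it first observes that $D_t^*=O(1)$ kills the lower tail, then bounds $(\Delta_t^*)^2\ind{\Delta_t^*\ge 2}\le 2\xi_t(\xi_t-1)$ and invokes the estimate $\E[\xi_t(\xi_t-1)\mid\cF_{t-1}]=O((A_t/n)^2)=O(\eps^4)$ valid on the high-probability event $\cE$ (via Lemma~\ref{Asmall}(iii)). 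Your approach buys uniformity: it needs no information about $A_t$ and yields almost-sure (not merely in-probability) bounds on the Lindeberg sums. The paper's approach for $\Delta_t^*$ is more ad hoc but recycles the second-moment bound \eqref{Exixi} already established for the variance calculation, so it avoids re-invoking Lemma~\ref{lbintail}. Both arguments also reach the same conclusion about passing from Claim~\ref{LC2} to the Lindeberg condition for $M_{1,n,\cdot}$ and $M_{2,n,\cdot}$, via boundedness of $\beta_t^{-1}$ and $\eps^{-1}\gamma_t$ together with the sum-of-squares trick.
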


Recall from \eqref{bsmall} that the deterministic quantities $\beta_{n,t}^{-1}$ are bounded.
Multiplying the martingale differences by such bounded factors clearly preserves the Lindeberg
condition. Hence the first part Claim~\ref{LC2} implies the Lindeberg
condition for the martingale $((\eps n)^{-1/2}S_{n,t})$.
For $((\eps^3 n)^{-1/2}\hat S_{n,t})$,
recall from \eqref{hDt} that $\hat\Delta_{n,t} = \gamma_{n,t}\Delta_{n,t}+\Delta^*_{n,t}$.
Thus the relevant differences are
\[
  (\eps^3 n)^{-1/2} \hat\Delta_{n,t} = \eps^{-1}\gamma_{n,t} (\eps n)^{-1/2} \Delta_{n,t}
 + (\eps^3 n)^{-1/2} \Delta^*_{n,t}.
\]
From \eqref{gt} we have $\gamma_{n,t}=O(\eps)$ when $t\le t_1$,
so the deterministic quantities $\eps^{-1}\gamma_{n,t}$ are bounded.
Furthermore, as noted in the proof of Lemma~\ref{lm2},
the Lindeberg condition is preserved by addition.
Thus, the Lindeberg condition for $((\eps^3 n)^{-1/2}\hat S_{n,t})$ follows from Claim~\ref{LC2}.

It remains only to prove Claim~\ref{LC2}. In the calculations, there is plenty of room to spare,
and there are doubtless many other strategies that would work.

Let $\delta>0$ be constant. To establish the Lindeberg condition for the differences
$((\eps n)^{-1/2}\Delta_{n,t})$,
note that if $|(\eps n)^{-1/2}\Delta_{n,t}|\ge \delta$,
then $|\Delta_{n,t}|\ge \delta (\eps n)^{1/2} \ge n^{1/3}$, say, for $n$ large enough.
Since $\eta_{n,t}-\Delta_{n,t}=\E[\eta_{n,t}\mid \cF_{t-1}]=O(1)$ by Lemma~\ref{cdist},
this implies $\eta_{n,t}\ge n^{1/3}-O(1)$,
which has probability $\exp(-\Omega(n^{1/3}))=o(n^{-100})$ by Lemma~\ref{cdist}
and a Chernoff bound. This is more than enough to establish the Lindeberg condition.

For the second part of Claim~\ref{LC2}, recall from \eqref{Dtsdef} that
\[
 D_t^* = \E[\xi_t\mid \cF_{t-1}] \hbox{\quad and\quad} \Delta_{n,t}^* = \xi_t-D_t^*,
\]
where $\xi_t=\xi_{n,t}$ is a random variable taking non-negative integer values.
From \eqref{pismall} and \eqref{Exi} we have the very crude bound
\[
 D_t^* =O(n\pi_t)+O(1/n)=O(1).
\]
Fixing $\delta$, it follows that if $n$ large enough (which we assume from now on)
then we always have $D_t^*< \delta(\eps^3n)^{1/2}$ and thus
$\Delta^*_{n,t} > - \delta(\eps^3n)^{1/2}$. Hence,
\begin{multline*}
 \E[(\Delta^*_{n,t})^2 \ind{|\Delta^*_{n,t}|\ge \delta(\eps^3 n)^{1/2}} \mid \cF_{t-1} ] 
 =  \E[(\Delta^*_{n,t})^2 \ind{\Delta^*_{n,t}\ge \delta(\eps^3 n)^{1/2}} \mid \cF_{t-1}] \\
 \le \E[(\Delta^*_{n,t})^2 \ind{\Delta^*_{n,t}\ge 2} \mid \cF_{t-1} ].
\end{multline*}
When $\Delta^*_{n,t}\ge 2$ then, since $2\le \Delta^*_{n,t} \le \Delta^*_{n,t}+D_t^* = \xi_t$, we have
\[
 (\Delta^*_{n,t})^2 \le \xi_t^2 \le 2\xi_t(\xi_t-1).
\]
From \eqref{Exixi} and Lemma~\ref{Asmall}(iii), on the event $\cE$ we have $\E[\xi_t(\xi_t-1)\mid\cF_{t-1}]=O(\eps^4)$,
uniformly in $t\le t_1$. Hence, on $\cE$,
\[
 \E[(\Delta^*_{n,t})^2 \ind{|\Delta^*_{n,t}|\ge \delta(\eps^3 n)^{1/2}} \mid \cF_{t-1} ] =O(\eps^4) = o(\eps^2).
\]
Summing over $t\le t_1$ and using Markov's inequality, we see that
\[
 \sum_{t\le t_1} \E[(\Delta^*_{n,t})^2 \ind{|\Delta^*_{n,t}|\ge \delta(\eps^3 n)^{1/2}} \mid \cF_{t-1} ]
 =\op(\eps^3n),
\]
which is exactly the Lindeberg condition for $((\eps^3 n)^{-1/2} \Delta^*_{n,t} )$.

\medskip
\noindent
{\bf Acknowledgements.}
We would like to thank the referees for suggestions leading
to improvements in the presentation, and Svante Janson for pointing us towards~\cite{BrownEagleson}.
We would also like to apologise to the editors for the very long time taken to
revise the paper.

\end{document}